\documentclass{amsart}
\usepackage{amssymb,latexsym}
\usepackage{color}
\usepackage{graphicx}
\usepackage[mathcal]{euscript}
\usepackage{mathrsfs}
\usepackage{enumerate}

\makeatletter

\newtheorem{thm}{Theorem}[section]
\newtheorem{cor}[thm]{Corollary}
\newtheorem{lem}[thm]{Lemma}

\newtheorem{prop}[thm]{Proposition}
\newtheorem{que}{Question}

%% A numbered theorem with a fancy name:

%% Numbered objects of "non-theorem" style (text roman):

\theoremstyle{definition}

\newtheorem{rem}[thm]{Remark}
\newtheorem{exmp}[thm]{Example}

%% An unnumbered remark:

%% Equations numbered by section:

\numberwithin{equation}{section}

\DeclareMathOperator{\alp}{alph}
\DeclareMathOperator{\dist}{dist}
\DeclareMathOperator{\diam}{diam}
\DeclareMathOperator{\Tran}{Trans}
\DeclareMathOperator{\Int}{int}
\DeclareMathOperator{\Prox}{Prox}
\DeclareMathOperator{\SProx}{SyProx}

\DeclareMathOperator{\Asy}{Asy}
\DeclareMathOperator{\M}{M}
\DeclareMathOperator{\Rec}{Rec}

\newcommand{\Al}{A}

\newcommand{\N}{\mathbb{N}}
\newcommand{\C}{\mathbb{C}}

\newcommand{\Si}{{S^1}}
\newcommand{\Z}{\mathbb{Z}}
\newcommand{\R}{\mathbb{R}}
\newcommand{\orbp}{\mbox{O}^+}

\newcommand{\set}[1]{\left\{#1\right\}}
\newcommand{\eps}{\varepsilon}
\newcommand{\ra}{\rightarrow}

\newcommand{\htop}{{h_{\text{top}}}}
%%%%%%%%%%%%%

\begin{document}

\title{Syndetic proximality and scrambled sets}

\author{T.K. Subrahmonian Moothathu}
\address[T.K.S. Moothathu]{Department of Mathematics and Statistics, University of Hyderabad, Hyderabad 500046, India}\email{tksubru@gmail.com}
\author{Piotr Oprocha}
\address[P. Oprocha]{AGH University of Science and Technology\\
Faculty of Applied Mathematics\\
al. A. Mickiewicza 30, 30-059 Krak\'ow,
Poland\\ -- and --\\Institute of Mathematics\\ Polish Academy of Sciences\\ ul. \'Sniadeckich 8, 00-956 Warszawa, Poland} \email{oprocha@agh.edu.pl}

\begin{abstract}
This paper is a systematic study about the syndetically proximal relation and the possible existence of syndetically scrambled sets for the dynamics of continuous self-maps of compact metric spaces. Especially we consider various classes of transitive subshifts, interval maps, and topologically Anosov maps. We also present many constructions and examples.
\end{abstract}
\subjclass[2010]{Primary 37B05; Secondary 37B10, 37E05}
\keywords{syndetically proximal, scrambled set, Li-Yorke chaos, entropy, transitivity, subshift, interval map}
\maketitle

\tableofcontents

\section{Introduction}

Events that happen infinitely often with bounded gaps in time are more interesting than events that just happen infinitely often. Hence the tag \emph{syndetic} has prominence in the theory of dynamical systems.

The main objects of study in this article are syndetically proximal pairs and syndetically scrambled sets, that is, scrambled sets in which every pair is syndetically proximal. The definition of syndetically proximal relation was available in the literature more than forty years ago \cite{Clay}, but not much has been proved beyond the results contained in the book of Glasner \cite{GlasnerProx} (some facts on syndetically proximal relation can be obtained as a byproduct of very abstract and general results, e.g. results on actions of groups, where proximality is studied in terms of Furstenberg families, etc.; particularly from \cite{Shao}, or \cite{AkinB1,AAG}).
This is a little bit surprising given that a lot has been done in the context of scrambled sets (see the recent survey \cite{SnohaSurv}). On the other hand, we must also point out that Kuratowski-Mycielski Theorem, which is one of the main tools in the theory of scrambled sets, is not so useful while dealing with the syndetically proximal relation since most often this relation is a first category subset of $X\times X$.

Here we aim to fill the gap in the literature by developing various tools that can be used for the construction of syndetically scrambled sets possessing various interesting properties (as Cantor sets, dense Mycielski sets, invariant sets, etc.). Our starting point is the recent work \cite{tksm} of one of the authors, where a few properties of the syndetically proximal relation were studied. These studies will be widely extended here, providing a nearly self-contained toolbox for further research. Especially we will examine the possible existence of syndetically scrambled sets for various classes of transitive subshifts, interval maps and topologically Anosov maps. We will also show that in some cases syndetically scrambled sets can be transferred via factor maps, so that syndetically scrambled sets can be detected in systems arising in applications (for example, in systems with a local product structure).

The paper is organized as follows. In the next section we provide most of the definitions used later. The reader may skip this section for the moment and refer to it when necessary.
Section~\ref{sec:tools} collects many practical tools that are useful for the construction of syndetically scrambled sets; in particular, they will be applied later in various proofs in the paper. We provide techniques which (under some mild assumptions) help us to change a syndetically scrambled set from an uncountable set to a Cantor set, from a Cantor set to a dense Mycielski set, etc., and also to transfer such a set through a countable-to-one factor map.

Next, we focus on more concrete classes of dynamical systems. In section~\ref{sec:transSD} we consider various classes of transitive subshifts: shifts of finite type, synchronizing subshifts (in particular all subshifts with the specification property) and coded subshifts (this class contains the so-called $\beta$-shifts). Section~\ref{sec:minimal} is devoted to the study of syndetically proximal pairs in minimal subshifts, with main emphasis on those arising from substitutions. We show that for substitutional subshifts, in contrast to the case of Toeplitz subshifts, the set of proximal pairs can be strictly larger than the set of syndetically proximal pairs. Section \ref{CounterEg} presents the construction of an example where the syndetically proximal relation is trivial but the proximal relation is non-trivial.

In section~\ref{sec:interval} we analyze maps of the unit interval from the point of view of syndetically proximal relation. We show that interval maps with zero entropy do not distinguish between proximal and syndetically proximal pairs, and while in the case of transitive maps on the interval these two relations are quite different (proximal pairs form a residual subset of $[0,1]^2$, while syndetically proximal pairs are in the complement of a residual set), there always exists quite large syndetically scrambled set for such a map.

The present paper is by no means complete characterization of syndetically proximal relation and syndetically scrambled sets. Many interesting questions
remain still open (e.g. we still do not know any answer to the main questions of \cite{tksm}). The authors hope that the results included in the paper will be a good motivation for other mathematicians to study further properties of syndetically proximal relation, and that the answers to these questions can be obtained in the future.

\section{Preliminaries}

Denote by $\N$ ($\mathbb{Z}, \mathbb{R}$, respectively) the set of all positive integers (integers, real numbers, respectively).
The set of non-negative integers is denoted $\N_0$, that is $\N_0=\N\cup \set{0}$.
Let $A\subset \N$. We say that $A$ is: \emph{thick} if for any $n\in \N$ there is $i$ such that $\set{i,i+1,\ldots, i+n}\subset A$; \emph{syndetic} if it intersects every thick set; \emph{co-finite} is $\N\setminus A$ is finite; \emph{thickly syndetic} if for every $n\in \N$ the set $\set{i\; : \:\set{i,i+1,\ldots,n}\subset A}$ is syndetic; \emph{piecewise syndetic} if it intersects every thickly syndetic set.

Let $(X,d)$ be a compact metric space. A subset is \emph{residual} if it contains a dense $G_\delta$ set, and \emph{first category} if it is in complement of a residual set. We say that a set $S\subset X$ is \emph{perfect} if it is closed without isolated points. We say that a non-empty set $X$ is \emph{totally disconnected} if all connected components of $X$ are singletons. By a \emph{Cantor set} we mean a perfect totally disconnected set.
The following two types of subsets $A\subset X$ will be of particular interest:
\begin{itemize}
\item[--] \emph{c-dense sets}, that is sets $A$ so that $A\cap U$ is uncountable for any nonempty open set $U\subset X$,
\item[--] \emph{Mycielski sets}, that is sets which are countably infinite unions of Cantor sets.
\end{itemize}
Note that every dense Mycielski set is also c-dense. From the other side, every uncountable Borel subset of a compact metric space contains a Cantor set \cite[Theorem~3.2.7]{Sriv}, and so every Borel c-dense subset contains a dense Mycielski subset.

\subsection{Topological dynamics}
By a \emph{dynamical system} we mean a pair $(X,f)$ where $X$ is a compact metric space and $f\colon X \ra X$ is a continuous map. When $f$ is a homeomorphism, we say that $(X,f)$ is an \emph{invertible dynamical system}. If $(X,f)$ and $(Y,g)$ are dynamical systems then by $(X\times Y, f\times g)$ we denote the product dynamical systems, where $X\times Y$ is endowed with any metric defining the product topology on $X\times Y$.

Let $(X, f)$ be a dynamical system and $A, B$ be non-empty subsets of $X$. Put $N (A, B)= \{n\in
\mathbb{N}: f^n (A)\cap B\neq \emptyset\}$. Observe that $N (A, B)= \{n\in \mathbb{N}: A\cap f^{- n} (B)\neq \emptyset\}$.
When $A=\set{x}$, we will simply write $N(x,B)$ instead of $N(\set{x},B)$.

A point $x$ is: a \emph{fixed point} if $f(x)=x$; \emph{periodic} if $f^n(x)=x$ for some $n\in \N$; \emph{recurrent} if $N(x,U)\neq \emptyset$ for any open set
$U\ni x$. When $x$ is a periodic point, the smallest number $n\in \N$ such that $f^n(x)=x$ is called the \emph{prime period} of $x$. The set of recurrent points will be denoted by $\Rec(f)$.
A point $y\in X$ is an \emph{$\omega$-limit
point} of a point $x$ if it is an accumulation point of the sequence
$x,f(x),f^2(x),\dots$. The set of all $\omega$-limit points of $x$
is called the \emph{$\omega$-limit set} of $x$ or \emph{positive limit set of $x$} and is denoted by
$\omega(x,f)$. The positive orbit of a point $x$ is the set $\orbp(x,f)=\set{x,f(x),f^2(x),\ldots}$.

If $M\subset X$ is nonempty, closed, invariant (i.e. $f(M)\subset M$) and has no proper subset with these three
properties then we say that $M$ is \emph{a minimal set} for $f$. If $X$ is the minimal set for $f$ then we say that $f$ is a \emph{minimal map} and $(X,f)$ is a \emph{minimal system}.
Elements of a minimal system are called \emph{minimal points} or \emph{uniformly recurrent points}. It is well know that for any minimal system $M$, $\omega(x,f)=M$ for every
$x\in M$; and $N(x,U)$ is syndetic for every $x\in M$ and every neighborhood $U$ of $x$. The set of all minimal points of $f$ is denoted as $\M(f)$.

We say that a dynamical system $(X,f)$ (or $f$ for short) is: \emph{transitive} if $N(U,V)\neq \emptyset$ for any two nonempty open sets $U,V\subset X$; \emph{totally transitive} if $(X,f^n)$ is transitive for any $n\in \N$; \emph{weakly mixing} if $(X\times X, f\times f)$ is transitive; \emph{mixing} if $N(U,V)$ is co-finite for any two nonempty open sets $U,V\subset X$; \emph{exact} if for every nonempty open set $U\subset X$ there is $n\in \N$ such that $f^n(U)=X$.
The set of \emph{transitive points}, that is points with dense orbits, is denoted by $\Tran(f)$.

We say that a map $\pi \colon X \ra Y$ is \emph{a factor map} or \emph{a semi-conjugacy between dynamical systems $(X,f)$, $(Y,g)$}, denoted by $\pi\colon (X,f)\ra (Y,g)$,
if $\pi$ is a continuous surjection and $\pi \circ f = g \circ \pi$. Sometimes we will also say that $(Y,g)$ is
a \emph{factor} of $(X,f)$ or that $(X,f)$ is an \emph{extension} of $(Y,g)$. When $\pi$ is a homeomorphism, we say that $\pi$ is \emph{a conjugacy} or
that $(X,f), (Y,g)$ are \emph{conjugate}.

\subsection{Scrambled sets and entropy}

Let $(X,f)$ be a dynamical system and let $d$ be an admissible metric on $X$. The \emph{asymptotic}, \emph{proximal}, and \emph{syndetically proximal} relations for $f$, are defined respectively as

\begin{eqnarray*}
\Asy(f)&=&\set{(x,y)\in X^2:\lim_{n\to \infty}d(f^n(x),f^n(y))=0},\\
\Prox(f)&=&\set{(x,y)\in X^2:\liminf_{n\to \infty}d(f^n(x),f^n(y))=0},\\
\SProx(f)&=&\Big\{(x,y)\in X^2:\\
&&\quad\quad\set{n\in \N:d(f^n(x),f^n(y))<\eps}\text{ is syndetic for all } \eps>0\Big\}.
\end{eqnarray*}
If $R\subset X\times X$ is a symmetric relation and $x\in X$ then we will usually denote the cell of $x$ in $R$ by $R(x)=\set{y: (x,y)\in R}$.

We say that $(x,y)$ is a \emph{scrambled pair} for $f$ if $(x,y)\in \Prox(f)\setminus \Asy(f)$, and $(x,y)$ is a \emph{syndetically scrambled pair} for $f$ if $(x,y)\in \SProx(f)\setminus \Asy(f)$. Note that $(x,y)$ is a \emph{(syndetically) scrambled pair} for $f$ iff $(x,y)$ is a \emph{(syndetically) scrambled pair} for $f^k$ for every $k\in \mathbb N$. A subset $S\subset X$ containing at least two points is a \emph{(syndetically) scrambled set} for $f$ if $(x,y)$ is a \emph{(syndetically) scrambled pair} for $f$ for any two distinct $x,y\in S$. Moreover, if there is $\eps>0$ such that $\limsup_{n\to \infty}d(f^n(x),f^n(y))\ge \epsilon$ for any two distinct $x,y\in S$ we say that $S$ is $\eps$-scrambled or syndetically $\eps$-scrambled respectively.
We will say that a scrambled set $S\subset X$ is \emph{invariant} for $(X,f)$ if $f(S)\subset S$.
Note that in the case of invertible
dynamical systems this definition does not coincide with the standard meaning of invariant set, since in this setting it is usually required that an invariant set $A$ satisfies $f(A)=A$.
To avoid any unambiguity, we will call sets with the property $f(A)=A$ \emph{strongly invariant}.

A dynamical system $(X,f)$ is said to be \emph{Li-Yorke chaotic} if it has an uncountable scrambled set.

For a definition of the \emph{topological entropy} of $f$ we refer the
reader to \cite{ALM}. that, if $X$ is a compact space then
the entropy of $f$ is a (possibly infinite) number $\htop(f)\in [0,+\infty]$.
We will use the basic properties of the entropy such as those in
\cite[Section 4.1]{ALM} without further reference.
Recently it was proved that positive topological entropy implies Li-Yorke chaos \cite{PTELY}.

We say that a point $x$ is \emph{distal}, if $(x,y)\notin \Prox(f)$
for every $y\in\omega(x,f)\setminus \set{x}$. A dynamical system $(X,f)$ is distal if every $x\in X$ is distal for $f$.
It is known that every point is proximal to some minimal point \cite{Furstenberg} (this statement is nontrivial when given point is not minimal), in particular distal points are always minimal and every distal system decomposes into a sum of minimal systems. It is also clear from the above that distal system can be defined equivalently as a system without proper (i.e. non-diagonal) proximal pairs.

\subsection{Symbolic dynamics}

Let $\Al$ be a finite set (\emph{an alphabet}) and denote by $\Al^*$ the set of all finite words over $\Al$.
The set $\Al^{*}$
with the concatenation of words is a free monoid with the minimal set of
generators $\Al$ (we assume that the empty word, denoted
by $\lambda$ is in $\Al^*$). The set of nonempty words is denoted by
$\Al^+$. If $w\in \Al^+$ and $k\in \N$ then by $w^k=ww\cdots w$ we denote the $k$-times concatenation of $w$ with itself.
If $w=a_0a_1\cdots a_{k-1} \in \Al^+$ then we write $w_i:=a_i$ or $w[i]:=a_i$; the number $|w|=k$ is said to be \emph{the length of $w$}; by $|w|_a$ is denoted the number of occurrences of the letter $a$ in
$w$.

If $x\in \Al^{\N_0}$ and $i\leq j$
are integers then we denote $x_{[i,j]}=x_i x_{i+1}\dots x_j$ and $x_{[i,j)}=x_{[i,j-1]}$. If $i>j$ then $x_{[i,j]}=\lambda$, where $\lambda$ is the empty word.
If $x= y$ then we put $d(x,y)=0$ and $d(x,y)=2^{-k}$ otherwise, where $k\geq 0$ is the maximal number such that $x_{[0,k)}=y_{[0,k)}$. We also define the shift map $\sigma\colon \Al^{\N_0}\ra \Al^{\N_0}$ by putting $\sigma(x)_i=x_{i+1}$ for all $i\in \N$.
It can be easily verified that $(\Al^{\N_0},d)$ is a compact metric space and $\sigma$ is continuous.
For any $w\in \Al^+$ we define its \emph{cylinder set} $C[w]:=\set{x\in \Al^{\N_0}\; ; \; x_{[0,n)}=w}$ where $n=|w|$. It is
well know that cylinder sets form a basis of the topology of $(\Al^{\N_0},d)$. By \emph{full  (one-sided) shift} over the alphabet $\Al$ we mean the pair $(\Al^{\N_0},\sigma)$.
If $\Al=\set{0,1,\ldots,m-1}$ for some $m\in \N$ then we say that $(\Al^{\N_0},d)$ is full (one-sided) shift on $m$ symbols. For simplicity we denote the full shift on $m$ symbols by $(\Sigma^+_m,\sigma)$, or simply $\Sigma_m^+$, where obviously $\Sigma^+_m=\set{0,\ldots,m-1}^{\N_0}$.

If $X$ is nonempty, closed, and $\sigma$-invariant (i.e. $\sigma(X)\subset X$), then the dynamical system $(X,\sigma|_X)$ is called a \emph{shift} or a \emph{subshift}. For simplicity, we will often write $X$ instead of the pair $(X,\sigma|_X)$, and when $X$ is clear from the context we will also write $\sigma$ instead of $\sigma|_X$.
For simplicity, we write $C_X[w]=C[w]\cap X$ where $X$ is a subshift.
By $L(X)$ we denote \emph{the language} of subshift $X$, that is the set $L(X):=\set{x_{[0,k]}\; : \; x\in X, k\geq 0}$.

Similarly, we can define two-sided subshifts of $\Al^\Z$ with the only difference that in the definition of metric, the integer $k$ is the maximal number such that $x_{(-k,k)}=y_{(-k,k)}$. In that case all other definitions presented above are modified accordingly. In particular, two-sided full shift on $m>0$ symbols will be denoted by $\Sigma_m$.

Let $F\subset \Al^+$ and let $X_F=\set{x\in \Al^{\N_0}\; : \; x_{[i,i+n]}\not\in F \text{ for all } i,n\geq 0}$. We say that a shift is a \emph{shift of finite type} or simply SFT if $X=X_F$ for a finite set of words $F$. Recall that a subshift $X$ is \emph{sofic} if it is a factor of a SFT.

Let $(X,\sigma)$ be a subshift and let $L(X)$ be its language. A nonempty word $s\in L(X)$ is said to be a \emph{synchronizing word} if we have $usv\in L(X)$ whenever $us,sv\in L(X)$. We say after \cite{BlaHan} that a subshift $(X,\sigma)$ is a \emph{synchronizing subshift} if it is transitive and has a synchronizing word. By well known facts on graph presentations of sofic shifts (in particular, properties of so-called follower sets of vertices in these presentations \cite{LM}) it is immediate to see that every transitive sofic shift is synchronizing.
For a given set of words $W\subset \Al^+$, let $X_W$ be the smallest
subshift containing all possible bi-infinite sequences obtained by a concatenation of words in $W$. We
call $X_W$ a \emph{coded system} or \emph{coded subshift}. If $X$ is a synchronizing subshift with synchronizing word $s$, then for any $u\in L(X)$ there are $v,v'$ such that $svuv's\in L(X)$. Then we see that any synchronizing subshift is coded system generated by the set $W=\set{us : sus\in L(X)}$. This was first proved in \cite{BlaHan}.
In fact we have the following relation between various classes of mixing subshifts (for the definition of the specification property see \cite{BowenSpec} or \cite{Denker}):

$$
\text{SFT } \Longrightarrow \text{ sofic }\Longrightarrow \text{ specification }\Longrightarrow\text{ synchronizing }\Longrightarrow\text{ coded}.
$$

Recall that a collection of words $B_1,\ldots, B_k\subset \Al^+$ is a \emph{circular code} if for any sequence $x_1,\ldots, x_m$, $y_1,\ldots, y_n\in \set{B_i}$
and any $p,s\in \Al^*$ the equalities $sp=x_1$ and $p x_2 \ldots x_m s=y_1\ldots y_n$ imply that $m=n$, $p=s=\lambda$ and $x_i=y_i$ for $i=1,\ldots,m$.
It is known (e.g. see \cite[Proposition 29 ]{BP}) that every coded system generated by a circular code is an SFT.

It was probably first proved by Krieger (see \cite{BlaHan2} for more details) that if $X_W$ is a coded system, then there exists a sequence of words $\mathcal{B}=\set{B_1,B_2,\ldots}$
such that for every $n\in \N$ the list $B_1,\ldots, B_n$ is a circular code and $X_W=X_{\mathcal{B}}$. In particular, there exists an increasing sequence $X_1\subset X_2 \subset \cdots \subset X_{\mathcal{B}}$ of transitive SFT such that $X_W=\overline{\bigcup_{i=1}^\infty X_i}$.

\begin{rem}
When $(X,\sigma)$ is a subshift then it is very easy to verify that any scrambled set is also $1$-scrambled. So we will only say `scrambled' instead of `$\eps$-scrambled' while discussing subshifts.
\end{rem}

\section{A few useful tools}\label{sec:tools}

The method of construction of Mycielski sets by the use of residual relations originated from the works of Mycielski \cite{Mycielski} and Kuratowski \cite{Kur73}. The following version of Mycielski's Theorem \cite{PTELY} will be useful in some cases (see \cite{Akin, SnohaSurv} for more details on applications of this technique).

\begin{thm}[Kuratowski-Mycielski]\label{thm:RelMycielski}
Let $X$ be perfect and $R_n\subset X^n$ be residual in $X^n$ for $n\in \N$. Then there is a dense Mycielski set $S\subset X$ with the property that $(x_1,\ldots,x_n)\in R_n$ for any $n\in \N$ and any pairwise distinct points $x_1,\ldots,x_n\in S$.
\end{thm}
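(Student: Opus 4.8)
The plan is to build a \emph{Mycielski scheme}: a countable family of nonempty open sets that branches like a binary tree and whose limit set is the desired $S$. Fix a countable base $\{B_i\}_{i\in\N}$ of $X$, and for each $n\in\N$ write $R_n=\bigcap_{m\in\N}G_{n,m}$ with each $G_{n,m}$ open and dense in $X^n$. Writing $2^{<\omega}$ for the set of finite $0$--$1$ words (with $\lambda$ the empty word), I would construct nonempty open sets $U_{i,w}$ for $i\in\N$, $w\in 2^{<\omega}$, satisfying: (a) $\overline{U_{i,\lambda}}\subset B_i$ and $\diam U_{i,\lambda}\le 1$; (b) $\overline{U_{i,w0}}\cup\overline{U_{i,w1}}\subset U_{i,w}$ and $\overline{U_{i,w0}}\cap\overline{U_{i,w1}}=\emptyset$; (c) $\diam U_{i,w}\le 2^{-|w|}$; and (d) the genericity condition: for every $n\in\N$, every $m\le k$, and every choice of $n$ pairwise distinct pairs $(i_1,w_1),\dots,(i_n,w_n)$ with $|w_1|=\dots=|w_n|=k$ and all created by stage $k$, one has $\overline{U_{i_1,w_1}}\times\dots\times\overline{U_{i_n,w_n}}\subset G_{n,m}$.

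The construction is a stagewise induction on $k$, at stage $k$ handling only the finitely many vertices $U_{i,w}$ with $i\le k$ and $|w|\le k$, which keeps each stage finite. To pass from level $k$ to level $k+1$: first, a nonempty open subset of a perfect space has no isolated points (an isolated point of the subset would be isolated in $X$), so each current leaf $U_{i,w}$ contains two nonempty open sets with disjoint closures inside $U_{i,w}$ and diameter $\le 2^{-(k+1)}$; take these as provisional $U_{i,w0},U_{i,w1}$, and for $i=k+1$ first choose $U_{k+1,\lambda}$ as in (a) (possible in any metric space). Then enforce (d): list the finitely many genericity requirements now coming due — each given by a value of $n$, an $n$-tuple of pairwise distinct active vertices at level $k+1$, and an $m\le k+1$ — and treat them one at a time. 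When treating a requirement, density of $G_{n,m}$ in $X^n$ yields a product box $W_1\times\dots\times W_n\subset G_{n,m}$ with each $\overline{W_\ell}$ contained in the current set indexed by the $\ell$-th vertex; replace those sets by the $W_\ell$. Each such replacement only shrinks vertices, so it preserves (b), (c) and every genericity requirement already secured; after finitely many replacements all of (a)--(d) hold at level $k+1$.

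For each $i$, the sets $\{U_{i,w}\}_{w\in 2^{<\omega}}$ form a Cantor scheme by (b)--(c), so $C_i:=\bigcap_{k}\bigcup_{|w|=k}\overline{U_{i,w}}$ is a Cantor set, and each $\xi\in 2^{\N}$ names the point $x_{i,\xi}:=\bigcap_k\overline{U_{i,\xi|k}}$, distinct branches giving distinct points. Since $C_i\subset B_i$ by (a) and $\{B_i\}$ is a base, $S:=\bigcup_{i\in\N}C_i$ is a dense Mycielski set. Now fix $n\in\N$ and pairwise distinct $x_1,\dots,x_n\in S$, say $x_\ell=x_{i_\ell,\xi_\ell}$. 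Distinctness gives $k_0$ such that the pairs $(i_\ell,\xi_\ell|k)$ are pairwise distinct for all $k\ge k_0$. For arbitrary $m\in\N$, pick $k\ge\max(k_0,m)$ large enough that all these vertices are active by stage $k$; then $(x_1,\dots,x_n)\in\prod_{\ell=1}^{n}\overline{U_{i_\ell,\xi_\ell|k}}\subset G_{n,m}$ by (d). As $m$ was arbitrary, $(x_1,\dots,x_n)\in\bigcap_m G_{n,m}\subset R_n$, as required.

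The only point needing care is the bookkeeping in the second paragraph: one must arrange the stages so that each is finite while every requirement $(n,\text{tuple},m)$ eventually comes due, and observe that monotone shrinking of vertices is compatible with all previously imposed constraints. The remaining ingredients — perfectness giving room to split, density of $G_{n,m}$ giving a sub-box inside it, and nested closed boxes capturing the limit tuple — are routine.
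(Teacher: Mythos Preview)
Your argument is the standard Mycielski--scheme construction and is correct: perfectness of $X$ lets you split each vertex into two children with disjoint closures, density of each $G_{n,m}$ in $X^n$ lets you shrink finitely many active vertices so that the product of their closures lands inside $G_{n,m}$, and the bookkeeping (finitely many active vertices and finitely many requirements at each stage, with every requirement eventually coming due) is arranged as you describe. The verification that any $n$ pairwise distinct points of $S=\bigcup_i C_i$ lie in every $G_{n,m}$, hence in $R_n$, is also fine.

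There is nothing to compare against, however: the paper does not supply its own proof of Theorem~\ref{thm:RelMycielski}. It quotes the Kuratowski--Mycielski theorem as a known tool, attributing this formulation to \cite{PTELY} and pointing to \cite{Akin,SnohaSurv} for further discussion. Your write-up is essentially the classical proof one finds in those references, so in that sense you have reproduced the intended argument rather than found an alternative route.
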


\begin{rem}
Using Kuratowski-Mycielski Theorem, it was proved in \cite{SnohaSurv} that there exists an uncountable $\eps$-scrambled set exactly when there exists
a Cantor $\eps$-scrambled set. Such a characterization for scrambled sets is still an open problem.
\end{rem}

It is known that points with dense orbit form a residual subset of the underlying space for any transitive map. Then as an immediate corollary of Kuratowski-Mycielski Theorem we get the following (see \cite{Iwanik}).

\begin{cor}\label{wMixScra}
Let $(X,f)$ be a non-trivial weakly mixing system and let $f^{\times n}\colon X^n\to X^n$ be the $n$-fold product map. Then there is a dense Mycielski set $S\subset X$ with the property that $(x_1,\ldots,x_n)\in \Tran(f^{\times n})$ for any $n\in \N$ and any pairwise distinct points $x_1,\ldots,x_n\in S$. In particular, $S$ is $\eps$-scrambled for any positive $\eps\le \diam[X]$.
\end{cor}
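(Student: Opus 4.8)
The plan is to deduce Corollary~\ref{wMixScra} directly from the Kuratowski--Mycielski Theorem (Theorem~\ref{thm:RelMycielski}), applied to the family of relations $R_n := \Tran(f^{\times n}) \subset X^n$. First I would check the hypotheses of Theorem~\ref{thm:RelMycielski}. For perfectness of $X$: a non-trivial transitive system on a compact metric space has no isolated points (an isolated point with dense orbit would force $X$ to be a finite periodic orbit, contradicting non-triviality), and weak mixing implies transitive, so $X$ is perfect. For residuality of each $R_n$: the key observation is that if $(X,f)$ is weakly mixing then $(X^n, f^{\times n})$ is transitive for every $n \in \N$ — this is the standard fact that weak mixing of $f$ is equivalent to transitivity of all finite products $f^{\times n}$ (weak mixing of $f\times f$ bootstraps to weak mixing, hence transitivity, of every finite power by an easy induction using $N(U_1\times\cdots\times U_n,\,V_1\times\cdots\times V_n)$-intersection arguments). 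Since a transitive system on a compact metric space without isolated points has a residual set of transitive points, $\Tran(f^{\times n})$ is residual in $X^n$; here we also use that $X^n$ is perfect because $X$ is.

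Next I would invoke Theorem~\ref{thm:RelMycielski} with these $R_n$ to obtain a dense Mycielski set $S \subset X$ such that $(x_1,\ldots,x_n) \in \Tran(f^{\times n})$ for every $n$ and every choice of pairwise distinct $x_1,\ldots,x_n \in S$. This is exactly the first assertion of the corollary.

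For the final sentence, I would argue that $S$ is $\eps$-scrambled for every $0 < \eps \le \diam[X]$. Take distinct $x,y \in S$. Then $(x,y) \in \Tran(f\times f)$, so the orbit of $(x,y)$ under $f\times f$ is dense in $X\times X$. Density gives two consequences: on one hand, the orbit comes arbitrarily close to the diagonal, so $\liminf_{k\to\infty} d(f^k(x),f^k(y)) = 0$, i.e.\ $(x,y) \in \Prox(f)$; on the other hand, since $\diam[X] = \diam[X\times X]$ in the sup-type metric (or at least the orbit visits pairs of points at distance arbitrarily close to $\diam[X]$), we get $\limsup_{k\to\infty} d(f^k(x),f^k(y)) \ge \eps$ for any $\eps \le \diam[X]$ — in particular $(x,y)\notin\Asy(f)$. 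Hence every pair in $S$ is $\eps$-scrambled. I should be slightly careful here about the choice of metric on $X\times X$: the paper only fixes "any metric defining the product topology", so rather than relying on $\diam[X\times X]=\diam[X]$ I would phrase it as: pick $a,b\in X$ with $d(a,b)$ close to $\diam[X]$, and use density of the $(f\times f)$-orbit of $(x,y)$ near $(a,b)$ to find times $k$ with $d(f^k(x),f^k(y))$ arbitrarily close to $\diam[X]$.

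The only mildly delicate point — and the one I would state carefully rather than wave at — is the passage from weak mixing of $f$ to transitivity of all the product maps $f^{\times n}$, together with the resulting residuality of $\Tran(f^{\times n})$; everything else (perfectness, the $\Prox\setminus\Asy$ verification) is routine. Since the excerpt flags this as ``an immediate corollary of Kuratowski-Mycielski Theorem (see \cite{Iwanik})'', I expect the intended proof is exactly this short invocation, and the write-up should be correspondingly brief.
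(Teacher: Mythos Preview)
Your proposal is correct and follows exactly the same approach as the paper: the paper's proof is a one-line invocation of the Kuratowski--Mycielski Theorem, noting that $X$ is perfect and that $\Tran(f^{\times n})$ is residual in $X^n$ when $f$ is weakly mixing. Your write-up simply spells out the details behind those two facts and the ``in particular'' clause, which the paper leaves implicit.
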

\begin{proof}
This is a direct application of Kuratowski-Mycielski Theorem since $X$ is perfect and $\Tran(f^{\times n})$ is residual in $X^n$ when $f$ is weakly mixing.
\end{proof}

\begin{rem}
The Kuratowski-Mycielski Theorem, while very useful in many situations, has very limited applications in the context of syndetically proximal pairs.
Simply, by \cite[Corollary 1]{tksm} the relation $\SProx(f)$ is not residual in most cases. In fact we have the following property, complementing Corollary \ref{wMixScra}.
\end{rem}

\begin{prop}
Let $(X,f)$ be a weakly mixing system with at least two minimal points in it. Then $\SProx(f)$ is of first category in $X^2$. Consequently, there is a dense Mycielski set $S\subset X$ such that
\begin{enumerate}[(i)]
\item $(x,y)\in Trans(f^{\times 2})\setminus \SProx(f)$ for any two distinct $x,y\in S$.

\item $S$ is $\eps$-scrambled for $f$ for any positive $\eps\le \diam [X]$.
\end{enumerate}
\end{prop}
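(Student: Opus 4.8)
The plan is to first establish the category statement $\SProx(f)$ is of first category in $X^2$, and then feed its complement (intersected with $\Tran(f^{\times 2})$) into the Kuratowski--Mycielski Theorem. For the first part I would fix two distinct minimal points $p,q\in X$. Since $f$ is weakly mixing, for every $n\in\N$ the product $f^{\times n}$ is transitive (this is the classical Furstenberg fact that weak mixing implies all finite powers of $f\times\cdots\times f$ are transitive), so in particular $f\times f$ is transitive and $\Tran(f^{\times 2})$ is residual in $X^2$. The key claim is that no point of $\Tran(f^{\times 2})$ lies in $\SProx(f)$, which already shows $\SProx(f)$ is contained in the first-category set $X^2\setminus\Tran(f^{\times 2})$; this gives the proposition's conclusion directly once we also know $\Tran(f^{\times 2})$ is itself disjoint from $\Asy(f)$ (for the $\eps$-scrambled claim). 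Actually the cleanest route: by Corollary~\ref{wMixScra} applied with $n=2$, there is already a dense Mycielski $S$ with $(x,y)\in\Tran(f^{\times 2})$ for distinct $x,y\in S$, and such pairs are automatically $\diam[X]$-scrambled; so (ii) is for free and (i) reduces to showing such $S$ (or a refinement) avoids $\SProx(f)$, i.e. to the claim below.

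The heart of the argument is: \emph{if $(x,y)\in\Tran(f^{\times 2})$ then $(x,y)\notin\SProx(f)$.} To see this, let $p\neq q$ be minimal points and pick $\eps>0$ with $d(p,q)>3\eps$. Choose a small ball $U$ around $p$ and $V$ around $q$ of radius $\eps$. Because $p$ is a minimal (uniformly recurrent) point, $N(p,U)$ is syndetic, say with gaps bounded by $M$; likewise $N(q,V)$ is syndetic with gap bound $M'$. Now the orbit of $(x,y)$ under $f\times f$ is dense in $X^2$, so it enters the open set $U\times V$, and in fact — using that the orbit visits $U\times V$ infinitely often along a syndetic-looking pattern is \emph{not} automatic, which is exactly the subtlety. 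The correct device: enter a tiny neighborhood of $(p,q)$ once, and then ride the minimality of $p$ and $q$. Concretely, choose $k$ with $f^k(x)$ close to $p$ and $f^k(y)$ close to $q$; by uniform continuity, for a suitably long window $[k,k+L]$ the point $f^{k+j}(x)$ stays $\eps$-close to $f^j(p)$ and $f^{k+j}(y)$ stays $\eps$-close to $f^j(q)$ for $0\le j\le L$. Among $j\in[0,M]$ there is some $j_0$ with $f^{j_0}(p)\in U$, hence $d(f^{k+j_0}(x),p)<2\eps$; pick $j_1\in[0,M]$ (shifting) so that also $f^{j_1}(q)\in V$. The point is to produce, inside every sufficiently long block, an index $n$ where $d(f^n(x),p)<2\eps$ and $d(f^n(y),q)<2\eps$ simultaneously, forcing $d(f^n(x),f^n(y))>d(p,q)-4\eps>\eps$. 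If these indices $n$ themselves form a syndetic set, then $\{n: d(f^n(x),f^n(y))<\eps\}$ is not syndetic (it misses a syndetic set's worth of bounded-gap obstructions — more carefully, its complement is thick-in-a-syndetic pattern, which is enough to contradict syndeticity at scale $\eps$). I would organize this via: the set of "good return times" to a neighborhood of $(p,q)$, thickened by the common syndetic recurrence of $p$ and $q$, yields infinitely many $n$ with bounded gaps at which the two orbits are $\eps$-separated; a set cannot be syndetic if its complement contains arbitrarily long blocks, but here we only get one separated index per long block, so the sharp statement is that $\{n: d(f^n(x),f^n(y))\ge\eps\}$ is infinite and, crucially, the window argument shows it meets every interval of some fixed length $N$, i.e. it is syndetic; hence its complement is not thick-free in the way syndeticity of $\{d<\eps\}$ would require — indeed a syndetic set and a syndetic set can be disjoint, so I need the stronger conclusion that $\{n:d(f^n(x),f^n(y))\ge \eps\}$ is \emph{thick}, which follows by choosing $L$ arbitrarily large and noting the separated indices recur with gap $\le M+M'$ throughout $[k,k+L]$; a set containing, for each $L$, a block of length $L$ within which another set of gap $\le M+M'$ lives — no. The genuinely correct and clean statement is: the separated-times set is syndetic (gap $\le M+M'$ eventually), and therefore $\{n: d(f^n(x),f^n(y))<\eps\}$ omits a syndetic set, so it is \emph{not thick}, hence \emph{not co-finite in any block}, and in particular there is $\eps$ for which it fails to be syndetic is false — so instead I conclude $\{d<\eps\}$ is not syndetic because a syndetic set must intersect the thick set of "long blocks avoiding $p$-returns", which do not exist. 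I will instead argue directly from the definition: I will exhibit arbitrarily long intervals $J$ on which $d(f^n(x),f^n(y))\ge\eps$ for \emph{every} $n\in J$, by choosing $k$ so that $f^k(x),f^k(y)$ are deep inside the minimal sets $\overline{\orbp(p)},\overline{\orbp(q)}$ and using that on these minimal sets the functions $z\mapsto d(z,\cdot)$ relative to the two disjoint closed invariant sets stay bounded below — more precisely, $\dist(\overline{\orbp(p)},\overline{\orbp(q)})=:\delta>0$ since distinct minimal sets of a distal-free... no, they can intersect in general, but \emph{distinct minimal points} of a weakly mixing system: their orbit closures could coincide. Hence I should instead only assume $p,q$ are two minimal points that are \emph{not} in one common minimal set, or simply $p\ne q$ with $\overline{\orbp(p)}\cap\overline{\orbp(q)}=\emptyset$ — which I can arrange? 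Not obviously. This tangle is the real obstacle (see below); the honest resolution is to take $p$ and $q$ to be two \emph{distal} points, or to use that in a weakly mixing system with two minimal points one can find two minimal points lying in disjoint minimal sets, or to invoke \cite{tksm, tkm-type results} that $\SProx$ being residual forces a unique minimal set.

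Once the category claim is in hand, the construction of $S$ is routine: $R_1:=\Tran(f^{\times 1})=\Tran(f)$ is residual in $X$, and $R_2:=\Tran(f^{\times 2})\setminus\SProx(f)$ is residual in $X^2$ (it is $\Tran(f^{\times 2})$, which is residual, minus a first-category set), and I set $R_n:=X^n$ for $n\ge 3$. Applying the Kuratowski--Mycielski Theorem (Theorem~\ref{thm:RelMycielski}), using that $X$ is perfect (a non-trivial weakly mixing system has no isolated points), I obtain a dense Mycielski set $S$ such that any two distinct $x,y\in S$ satisfy $(x,y)\in\Tran(f^{\times 2})\setminus\SProx(f)$. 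That gives (i); and since $(x,y)\in\Tran(f^{\times 2})$ forces $\limsup_n d(f^n(x),f^n(y))=\diam[X]$ and $\liminf_n d(f^n(x),f^n(y))=0$, the pair is $\eps$-scrambled for every $\eps\le\diam[X]$, giving (ii).

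\medskip

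\emph{Main obstacle.} As flagged above, the delicate point is producing, from the single hypothesis "$(X,f)$ has at least two minimal points", a witnessing configuration $(p,q)$ and a scale $\eps>0$ such that every transitive point of $f\times f$ genuinely fails the syndetic-$\eps$-closeness condition. The clean way is to show that if $\SProx(f)$ were \emph{not} of first category then, being a tail set invariant under $f\times f$ and (by transitivity of $f\times f$) therefore either meager or residual, it would be residual; and a residual syndetically proximal relation forces every point to be syndetically proximal to a transitive point, which (chasing definitions, as in \cite{tksm}) forces a unique minimal set, contradicting the hypothesis. So the real work is the topological $0$--$1$ law for the tail relation $\SProx(f)$ under weak mixing plus the deduction "$\SProx(f)$ residual $\Rightarrow$ unique minimal set"; the Mycielski extraction afterwards is bookkeeping.
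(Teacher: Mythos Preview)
Your overall strategy is exactly the paper's: show $\Tran(f^{\times 2})\cap\SProx(f)=\emptyset$ (so $\SProx(f)$, being contained in the complement of a residual set, is first category), then feed $R_2=\Tran(f^{\times 2})\setminus\SProx(f)$ into Kuratowski--Mycielski. The paper's own proof is two lines: it invokes \cite[Corollary~1]{tksm} as a black box for the disjointness and first-category statements, and then applies Theorem~\ref{thm:RelMycielski}. Your final two paragraphs reproduce this correctly, so at the level the paper argues, your proof is complete.

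Where you get tangled is in your attempt to \emph{re-prove} the disjointness directly. The difficulty you flag---what if the two minimal points lie in the same minimal set---is real, and your syndetic-return bookkeeping does not resolve it (as you yourself notice, producing one separated index per long block is not enough, and you never establish a thick set of separated times). The clean observation that cuts through all cases is this: the hypothesis ``at least two minimal points'' is equivalent to the existence of an $(f\times f)$-minimal set $N\subset X^2$ disjoint from the diagonal $\Delta$. Indeed, if $p\ne q$ are minimal and lie in distinct $f$-minimal sets $M_1,M_2$, any $(f\times f)$-minimal subset of the invariant set $M_1\times M_2$ works; if they lie in a common minimal set $M$ with $|M|\ge 2$, then $f|_M$ has no fixed point, so for any $m\in M$ the pair $(m,f(m))$ is non-proximal (a proximal time sequence would produce a fixed point in the limit), and any $(f\times f)$-minimal set inside its orbit closure must miss $\Delta$. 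Given such $N$, set $\eta=\min\{d(a,b):(a,b)\in N\}>0$. For $(x,y)\in\Tran(f^{\times 2})$ and any $L$, choose a small product neighborhood of a point of $N$ so that anything entering it shadows $N$ for $L$ steps; since the $(f\times f)$-orbit of $(x,y)$ is dense it enters, yielding $L$ consecutive times $n$ with $d(f^n(x),f^n(y))\ge\eta/2$. Hence the set $\{n:d(f^n(x),f^n(y))<\eta/2\}$ has thick complement and is not syndetic, so $(x,y)\notin\SProx(f)$.

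Your $0$--$1$ law reformulation at the end is valid ($\SProx(f)$ is Borel and $(f\times f)$-invariant, so under transitivity of $f\times f$ it is meager or comeager), but it does not bypass the crux: deducing ``$\SProx(f)$ residual $\Rightarrow$ at most one minimal point'' is exactly the contrapositive of the disjointness $\Tran(f^{\times 2})\cap\SProx(f)=\emptyset$ above.
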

\begin{proof}
By \cite[Corollary 1]{tksm} we have that $\SProx(f)$ is a first category set disjoint with $\Tran(f^{\times 2})$. Therefore (i) follows by Kuratowski-Mycielski Theorem and (ii) is a consequence of (i).
\end{proof}

Motivated by the above facts, we will try to answer (at least partially) the following questions:

\begin{que}
How can we prove the existence of syndetically scrambled sets?
\end{que}

\begin{que}
If we know that there is an uncountable syndetically $\eps$-scrambled set, can we establish the existence of a Cantor syndetically $\eps$-scrambled set, at least in some cases?
\end{que}

\begin{que}
If we know that there is a Cantor syndetically scrambled set, can we establish the existence of a dense Mycielski syndetically scrambled set, at least in some cases?
\end{que}

\subsection{From uncountable to Cantor}

We start with some simple observations about subshifts, related to the first two questions.

\begin{prop}\label{onesideSy}
Let $(X,\sigma)$ be a one-sided subshift. If there is $z\in X$ whose syndetically proximal cell $\{y\in X:(y,z)\in \SProx(\sigma)\}$ is uncountable, then $(X,\sigma)$ has a Cantor syndetically scrambled set.
\end{prop}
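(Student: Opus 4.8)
The plan is to extract from the uncountable syndetically proximal cell $\SProx(\sigma)(z)$ a Cantor set $C$ all of whose pairs are syndetically proximal and non-asymptotic, using the special structure of one-sided subshifts. The key structural fact I would exploit is the Remark already recorded in the excerpt: in a one-sided subshift, a pair $(x,y)$ is asymptotic if and only if $x=y$ (if $x_i=y_i$ for all $i\ge N$, then $\sigma^N x=\sigma^N y$, hence $\sigma^n x=\sigma^n y$ for all $n\ge N$, so $(x,y)\in\Asy$ only for $x=y$; more to the point, distinct points satisfy $\limsup d(\sigma^n x,\sigma^n y)=1$). Consequently, \emph{any} subset of $X$ in which all pairs are syndetically proximal is automatically a syndetically $1$-scrambled set. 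So the entire problem reduces to: find a Cantor set inside $\SProx(\sigma)(z)$.

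First I would observe that syndetic proximality to a fixed point $z$ is, in a one-sided subshift, a coordinate-wise closeness condition with syndetic return times, and that it is preserved under ``gluing along a tail'': more precisely, I would like to show that $\SProx(\sigma)(z)$ is a closed subset of $X$ (or at least contains a closed uncountable subset). Closedness is plausible because if $y^{(k)}\to y$ with each $(y^{(k)},z)\in\SProx(\sigma)$, then for fixed $\eps>0$ and a window $[0,m)$, eventually $y^{(k)}$ agrees with $y$ on a long prefix, and on that prefix $y$ inherits the syndetic-closeness of $y^{(k)}$ to $z$ — but one must be careful that the syndeticity constant may blow up as $\eps\to 0$. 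The honest route is: $\SProx(\sigma)(z)=\bigcap_{\eps>0}\{y: N(\text{$d(\sigma^n y,\sigma^n z)<\eps$}) \text{ is syndetic}\}$, and each of these sets, while not obviously closed, can be intersected with the closure without losing uncountability, because an uncountable Borel set contains a Cantor set (the cited \cite[Theorem 3.2.7]{Sriv}). So provided $\SProx(\sigma)(z)$ is Borel — which it is, being a countable intersection over rational $\eps$ of sets of the form $\{y:\forall N\,\exists n\le N \text{ (bounded-gap condition)}\}$, each an $F_{\sigma\delta}$-type set — its uncountability hands us a Cantor subset $C\subset\SProx(\sigma)(z)$ directly.

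Then I would finish: since $C$ is a Cantor set and every pair $y,y'\in C$ satisfies $(y,z),(y',z)\in\SProx(\sigma)$, and $\SProx(\sigma)$ is an equivalence relation — in particular transitive, a fact established in \cite{tksm} and usable here — we get $(y,y')\in\SProx(\sigma)$; and by the Remark on subshifts, $y\ne y'$ forces $\limsup d(\sigma^n y,\sigma^n y')=1$, so $(y,y')\notin\Asy(\sigma)$. Hence $C$ is a Cantor syndetically ($1$-)scrambled set, as required.

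The main obstacle is the Borel-complexity bookkeeping for $\SProx(\sigma)(z)$: one must write the ``syndetic for all $\eps>0$'' condition as a genuine Borel formula so that \cite[Theorem 3.2.7]{Sriv} applies. The cleanest way is to note ``$A\subset\N$ is syndetic'' $\iff$ ``$\exists M\,\forall i\,\exists n\in[i,i+M]\cap A$'', which is $\Sigma^0_2$ in the characteristic function of $A$; composing with the continuous maps $y\mapsto (d(\sigma^n y,\sigma^n z))_n$ keeps everything Borel, and the outer $\forall\eps>0$ may be replaced by $\forall k\in\N$ with $\eps=1/k$, yielding a countable intersection. If one prefers to avoid descriptive set theory entirely, an alternative is a direct Cantor-scheme construction: build a binary tree of words $(w_s)_{s\in\{0,1\}^{<\N}}$ in $L(X)$ with $w_{s0},w_{s1}$ both extending $w_s$ and both ``shadowing'' the corresponding prefix of $z$ with a uniform (depending only on $|s|$) bounded-gap error $\le 1/|s|$, harvested from distinct elements of the uncountable cell; the branches of this tree form the desired Cantor set. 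Either way the dynamical content is light — it is entirely carried by transitivity of $\SProx(\sigma)$ and the one-sided-subshift observation — and the work is in the set-theoretic packaging.
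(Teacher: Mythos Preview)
Your proposal contains a genuine error at its central step. You claim that in a one-sided subshift, $(x,y)\in\Asy(\sigma)$ forces $x=y$, and that distinct points satisfy $\limsup_n d(\sigma^n x,\sigma^n y)=1$. This is false: take $x=01^\infty$ and $y=1^\infty$ in $\Sigma_2^+$; these are distinct but $\sigma(x)=\sigma(y)$, so the pair is asymptotic with $\limsup_n d(\sigma^n x,\sigma^n y)=0$. Your own parenthetical computation shows exactly this: ``if $x_i=y_i$ for all $i\ge N$, then $\sigma^N x=\sigma^N y$'' --- but that is a \emph{proof} that such a pair is asymptotic, not a proof that it must be diagonal. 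The Remark in the paper that you cite says only that a \emph{scrambled} set (which already excludes asymptotic pairs) is automatically $1$-scrambled; it does not say asymptotic pairs are trivial.

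What is true, and what the paper exploits, is that in a one-sided subshift each asymptotic equivalence class is \emph{countable}: the asymptotic cell of $z$ equals $\bigcup_{N\ge 0}\{y:\sigma^N y=\sigma^N z\}$, and each set in this union has at most $|A|^N$ elements. So after obtaining (as you do) a Cantor set $K$ inside the syndetically proximal cell of $z$ --- this part of your argument is fine, and the paper simply quotes \cite[Proposition~5]{tksm} for the Borelness --- one still needs to pass to a Cantor subset meeting each asymptotic class at most once. The paper does this by observing that $\Asy(\sigma)$ restricted to $K\times K$ is an $F_\sigma$ equivalence relation with countable classes and invoking the selection theorem \cite[Theorem~5.13.9]{Sriv}. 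Your argument skips precisely this step, and without it the Cantor set $C$ you produce need not be scrambled at all.
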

\begin{proof}
Since $\{y\in X:(y,z)\in \SProx(\sigma)\}$ is an uncountable Borel set (see \cite[Proposition~5]{tksm}), it contains a Cantor set $K$. Consider the asymptotic equivalence relation $x\sim y$ iff $(x,y)\in \Asy(\sigma)$ on $K$. This is an $F_\sigma$ relation since $\{(x,y)\in K^2:x\sim y\}=\bigcup_{k\in \mathbb N}\bigcap_{n=k}^\infty \{(x,y)\in K^2:x_n=y_n\}$. Also each equivalence class of the relation $\sim$ is countable since $(X,\sigma)$ is a one-sided subshift. Hence by Theorem 5.13.9 of \cite{Sriv}, there is a Cantor set $S\subset K$ which intersects each equivalence class of $\sim$ in at most one element. Clearly $S$ is syndetically scrambled for $\sigma$ which ends the proof.
\end{proof}

\begin{cor}
Let $(X,\sigma)$ be a one-sided subshift possessing an uncountable syndetically scrambled set. Then $(X,\sigma)$ has a Cantor syndetically scrambled set.
\end{cor}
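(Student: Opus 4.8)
The corollary follows almost immediately from the previous proposition, so the plan is simply to reduce to it by producing a point whose syndetically proximal cell is uncountable.

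Let $T\subset X$ be an uncountable syndetically scrambled set. The goal is to locate a single point $z$ (not necessarily in $T$, though in fact one may take $z\in T$) such that $\{y\in X:(y,z)\in\SProx(\sigma)\}$ is uncountable; then Proposition~\ref{onesideSy} applies verbatim and yields a Cantor syndetically scrambled set for $(X,\sigma)$. But this is trivial: pick any $z\in T$. Since every pair of distinct points of $T$ is a syndetically scrambled pair, in particular $(y,z)\in\SProx(\sigma)$ for every $y\in T\setminus\{z\}$, so $T\setminus\{z\}\subset\{y\in X:(y,z)\in\SProx(\sigma)\}$. As $T$ is uncountable, so is $T\setminus\{z\}$, hence the syndetically proximal cell of $z$ is uncountable. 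Proposition~\ref{onesideSy} now gives the desired Cantor syndetically scrambled set.

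There is essentially no obstacle here: the only ingredients are the definition of a syndetically scrambled set (every pair of distinct points lies in $\SProx(\sigma)\setminus\Asy(\sigma)$, in particular in $\SProx(\sigma)$) and Proposition~\ref{onesideSy}, which already packages the one-sided-subshift argument (Borel measurability of the cell via \cite[Proposition~5]{tksm}, extraction of a Cantor subset, and the reduction of the countable-to-one asymptotic relation via Theorem~5.13.9 of \cite{Sriv}). So the proof is a one-line deduction, and I would present it as such.

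For completeness I would remark that the resulting Cantor set need not be contained in $T$; the construction in Proposition~\ref{onesideSy} only guarantees that it sits inside a Cantor subset of the syndetically proximal cell of $z$ and meets each asymptotic class in at most one point. If one wanted the Cantor scrambled set to refine the original $T$, one would instead apply the argument of Proposition~\ref{onesideSy} to a Cantor subset of $T\setminus\{z\}$ (which exists because $T\setminus\{z\}$ contains an uncountable Borel set, namely itself if one first passes to a Borel uncountable subset, or simply because every uncountable analytic set contains a Cantor set), but this refinement is not needed for the statement as given.
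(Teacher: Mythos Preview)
Your proof is correct and is exactly the intended argument: the paper states the corollary without proof precisely because it is an immediate consequence of Proposition~\ref{onesideSy}, obtained by fixing any $z$ in the given uncountable syndetically scrambled set $T$ and observing that $T$ lies in the syndetically proximal cell of $z$. Your additional remarks about whether the Cantor set can be taken inside $T$ are accurate but unnecessary for the statement.
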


If $(X,\sigma)$ is a two-sided subshift, then the above technique will not work. For instance it can happen that there is $z\in X$ such that $\{y\in X:y_{[0,\infty)}=z_{[0,\infty)}\}$ is uncountable. However, this problem can be resolved if we also demand asymptoticity under the action of $\sigma^{-1}$.

\begin{prop}
Let $(X,\sigma)$ be a two-sided subshift. If there is an uncountable set $Z\subset X$ such that $Z^2\subset \SProx(\sigma)\cap \Asy(\sigma^{-1})$, then $(X,\sigma)$ has a Cantor syndetically scrambled set.
\end{prop}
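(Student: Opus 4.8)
The plan is to follow the strategy of Proposition~\ref{onesideSy}, the only genuinely new point being that the extra hypothesis $Z^2\subset\Asy(\sigma^{-1})$ is exactly what is needed to force the asymptotic relation to behave on the relevant Cantor set as it does in the one-sided case. First I would fix any $z\in Z$ and pass to the cell
$$C=\{y\in X:(y,z)\in\SProx(\sigma)\cap\Asy(\sigma^{-1})\}.$$
Since $Z\subseteq C$, the set $C$ is uncountable. It is also Borel: the piece $\SProx(\sigma)(z)$ is Borel (by \cite[Proposition~5]{tksm}, or directly from the definition of $\SProx$), while $\Asy(\sigma^{-1})(z)$ is $F_\sigma$, being $\bigcup_{N\in\N}\{y\in X:y_i=z_i\text{ for all }i\le -N\}$ — recall that for a two-sided subshift $d(\sigma^{-n}y,\sigma^{-n}z)\to 0$ simply means that $y$ and $z$ agree on all sufficiently negative coordinates. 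Hence, by \cite[Theorem~3.2.7]{Sriv}, $C$ contains a Cantor set $K$.

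Next I would observe that $K^2\subseteq\SProx(\sigma)\cap\Asy(\sigma^{-1})$. Both $\Asy(\sigma^{-1})$ and $\SProx(\sigma)$ are equivalence relations (the latter being standard; it is also what is implicitly used in Proposition~\ref{onesideSy}), so their intersection $R$ is an equivalence relation and $C$ is precisely the $R$-class of $z$; thus all points of $K$ are pairwise $R$-related, and in particular $K$ is a Cantor syndetically proximal set. It remains to thin $K$ down to a set on which no two distinct points are $\sigma$-asymptotic. The key observation here is that $\Asy(\sigma)$ restricted to $K$ has only countable classes: if $x,y\in K$ and $(x,y)\in\Asy(\sigma)$, then, since also $(x,y)\in\Asy(\sigma^{-1})$, the sequences $x$ and $y$ agree in all but finitely many coordinates, and for a fixed $x$ there are only countably many such $y$. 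Moreover $\{(x,y)\in K^2:(x,y)\in\Asy(\sigma)\}=\bigcup_{m\in\N}\bigcap_{i\ge m}\{(x,y)\in K^2:x_i=y_i\}$ is $F_\sigma$. So, exactly as in Proposition~\ref{onesideSy}, Theorem~5.13.9 of \cite{Sriv} produces a Cantor set $S\subseteq K$ meeting each $\Asy(\sigma)$-class in at most one point. Then for distinct $x,y\in S$ we have $(x,y)\in\SProx(\sigma)\setminus\Asy(\sigma)$, so $S$ is the desired Cantor syndetically scrambled set.

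The step I expect to need the most care is the claim that $\Asy(\sigma)|_K$ has countable classes: this is precisely where the assumption $Z^2\subset\Asy(\sigma^{-1})$ is used, and it is the only place where the argument genuinely departs from the one-sided situation — without it, a Cantor subset of a $\SProx(\sigma)$-cell can have uncountable $\Asy(\sigma)$-classes (as pointed out just before the statement, e.g. when $\{y:y_{[0,\infty)}=z_{[0,\infty)}\}$ is uncountable), so Theorem~5.13.9 of \cite{Sriv} would no longer apply. A secondary point worth stating explicitly is the transitivity of $\SProx(\sigma)$, which is what allows the reduction to the cell of a single point $z$ instead of carrying the whole set $Z$ through the argument.
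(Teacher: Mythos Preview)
Your proposal is correct and follows essentially the same route as the paper: fix $z\in Z$, pass to the Borel cell $\{y:(y,z)\in\SProx(\sigma)\cap\Asy(\sigma^{-1})\}$, extract a Cantor set $K$, and then apply \cite[Theorem~5.13.9]{Sriv} to $\Asy(\sigma)|_K$ exactly as in Proposition~\ref{onesideSy}. The paper's write-up merely says ``proceed as in the proof of Proposition~\ref{onesideSy}'' at that last step, whereas you have spelled out (correctly) the one nontrivial point---that the hypothesis $K^2\subset\Asy(\sigma^{-1})$ is precisely what forces the $\Asy(\sigma)$-classes on $K$ to be countable.
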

\begin{proof}
Fix $z\in Z$ and consider the Borel set $Z'=\{y\in X:(y,z)\in \SProx(\sigma)\cap \Asy(\sigma^{-1})\}$. Then $Z'$ is uncountable since $Z\subset Z'$, and so there is a Cantor set $K\subset Z'$. Now proceed as in the proof of Proposition \ref{onesideSy}.
\end{proof}

\subsection{From Cantor to Mycielski}

The next two Lemmas are related to our third question. These Lemmas will be used later as a unified tool in some proofs, making the exposition more compact. Also, since some of our results will remain true when `syndetically scrambled' is replaced with just `scrambled', we state this important fact as a general remark below.

\begin{rem}
While we do not state it explicitly, in many places it is possible to replace `syndetically scrambled' with just `scrambled' and the result still holds. Examples of such situation are Lemma \ref{cdense}, Lemma \ref{InvMyc}, Proposition \ref{joinCantScra} and Proposition \ref{t-to-one}. It is usually obtained by a straightforward modification of the proof, so the reader can easily adopt our proofs to this setting when necessary.
\end{rem}

\begin{lem}\label{cdense}
Let $(X,f)$ be a dynamical system and let $S\subset X$ be an uncountable syndetically scrambled set for $f$ with the following property: for any nonempty open set $U\subset X$ there is $n\in \mathbb N$ such that $f^n(S)\subset f^n(U)$.

Then, $f$ has a $c$-dense syndetically scrambled set $T$. If $S$ is also a Cantor set, then $T$ may be chosen to be a dense Mycielski set. Moreover, $T$ is $\eps$-scrambled if $S$ is $\eps$-scrambled.
\end{lem}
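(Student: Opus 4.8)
The plan is to fix a countable base of the topology, use the hypothesis to transplant a copy of (a piece of) $S$ into every basic open set, and then glue the copies together; the gluing is harmless because of a single elementary observation. Fix a countable base $\{U_i\}_{i\in\N}$ of nonempty open subsets of $X$ and, by the hypothesis, choose $n_i\in\N$ with $f^{n_i}(S)\subset f^{n_i}(U_i)$ for each $i$. The observation is: if $a,a'\in X$ satisfy $f^{n}(a)=f^{n}(a')$, then $f^{m}(a)=f^{m}(a')$ for every $m\ge n$. Hence, whenever $f^{n}(a)=f^{n}(a')$ and $f^{n}(b)=f^{n}(b')$, the sequences $(d(f^{m}(a),f^{m}(b)))_{m}$ and $(d(f^{m}(a'),f^{m}(b')))_{m}$ differ only in the indices $0,\dots,n-1$. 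Since membership in $\Asy(f)$ depends only on the tail of such a sequence, since a syndetic subset of $\N$ stays syndetic after deleting or adjoining finitely many integers, and since $\limsup$ ignores finitely many terms, the pair $(a,b)$ is a (syndetically) scrambled pair, respectively a (syndetically) $\eps$-scrambled pair, if and only if $(a',b')$ is. In particular, because $S$ is scrambled, $f^{n}|_{S}$ is injective for every $n\in\N$: an equality $f^{n}(a)=f^{n}(a')$ with distinct $a,a'\in S$ would force $(a,a')\in\Asy(f)$.

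I first construct $T$ assuming only that $S$ is uncountable. Partition $S$ into countably many uncountable pieces $S=\bigsqcup_{i\in\N}R_i$ (possible for any uncountable set). For each $i$ pick, using the axiom of choice, a map $\phi_i\colon S\to U_i$ with $f^{n_i}\circ\phi_i=f^{n_i}|_{S}$, by selecting for every $s\in S$ some point of $U_i\cap f^{-n_i}(f^{n_i}(s))$ (nonempty since $f^{n_i}(s)\in f^{n_i}(S)\subset f^{n_i}(U_i)$). By the observation, $\phi_i$ is injective on $S$. Set $S_i:=\phi_i(R_i)\subset U_i$ and $T:=\bigcup_{i\in\N}S_i$. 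Then $T\cap U_i\supset S_i$ is uncountable for all $i$, so $T$ is $c$-dense, as $\{U_i\}$ is a base. Given distinct $x,y\in T$, choose $i,j$ with $x\in S_i$, $y\in S_j$ and $a\in R_i$, $b\in R_j$ with $\phi_i(a)=x$, $\phi_j(b)=y$; then $a\neq b$ (if $i=j$ by injectivity of $\phi_i$, and if $i\neq j$ because $R_i\cap R_j=\emptyset$). Since $a,b$ are distinct points of $S$, the pair $(a,b)$ is syndetically scrambled, and is $\eps$-scrambled if $S$ is; as $f^{n_i}(x)=f^{n_i}(a)$ and $f^{n_j}(y)=f^{n_j}(b)$, the observation yields the same conclusion for $(x,y)$. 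Thus $T$ is a $c$-dense (syndetically) $\eps$-scrambled set, which proves the first assertion.

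Suppose now in addition that $S$ is a Cantor set; then I rerun the construction keeping everything Borel. Choose the $R_i$ to be pairwise disjoint, uncountable, Borel (say, clopen) subsets of $S$. For each $i$ the set $A_i=\{(p,u)\in X\times X:u\in U_i,\ f^{n_i}(u)=p\}$ is Borel and all its vertical sections $U_i\cap f^{-n_i}(p)$ are $\sigma$-compact; so by the Arsenin--Kunugui selection theorem (see \cite{Sriv}) the set $f^{n_i}(U_i)$, which is the projection of $A_i$ onto the first coordinate, is Borel and $A_i$ admits a Borel uniformization, i.e.\ there is a Borel map $\tau_i\colon f^{n_i}(U_i)\to U_i$ with $f^{n_i}\circ\tau_i=\mathrm{id}$. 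Since $f^{n_i}(S)\subset f^{n_i}(U_i)$, the map $\phi_i:=\tau_i\circ f^{n_i}|_{S}\colon S\to U_i$ is Borel, satisfies $f^{n_i}\circ\phi_i=f^{n_i}|_{S}$, and (by the observation) is injective on $S$. A Borel injection between standard Borel spaces sends Borel sets to Borel sets (Lusin--Souslin), so each $S_i=\phi_i(R_i)$ is Borel, and hence $T:=\bigcup_{i}S_i$ is Borel; it is $c$-dense and (syndetically) $\eps$-scrambled exactly as in the previous paragraph. Finally, by the fact recalled in the preliminaries (every Borel $c$-dense subset of a compact metric space contains a dense Mycielski subset), $T$ contains a dense Mycielski set $T'$; as a subset of $T$, the set $T'$ is again (syndetically) $\eps$-scrambled, and it is the required dense Mycielski set.

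The only step that goes beyond bookkeeping around the orbit-coincidence observation of the first paragraph is making $T$ Borel in the Cantor case, which is precisely where the measurable selection (Arsenin--Kunugui) argument is needed; in general there is no continuous choice of the sections $\tau_i$. (A harmless technical point: the pieces $R_i$ need not exhaust $S$, so disjoint uncountable Borel pieces, even clopen ones, are easy to produce.)
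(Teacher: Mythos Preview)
Your argument is correct. The uncountable case is essentially identical to the paper's proof: fix a base, split $S$ into disjoint uncountable pieces, and use the covering hypothesis together with the eventual-coincidence observation to transplant each piece into a basic open set.

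In the Cantor case your route diverges from the paper's. The paper works purely topologically: it shrinks each basic open set $V_j$ to $U_j$ with $\overline{U_j}\subset V_j$, notes that $f^n(S_j)$ is a Cantor set (since $f^n$ is injective and continuous on $S_j$), forms the compact set $X_j=\overline{U_j}\cap f^{-n}(f^n(S_j))$, and then invokes the elementary fact (\cite[Remark~4.3.6]{Sriv}) that a continuous surjection from a compact metric space onto a Cantor set admits a Cantor section, directly yielding a Cantor $T_j\subset V_j$. The Mycielski set is then just $\bigcup_j T_j$. Your approach instead keeps the open $U_i$, invokes Arsenin--Kunugui to get a Borel selector $\tau_i$, uses Lusin--Souslin to make each $S_i$ Borel, and finally appeals to the general principle that a Borel $c$-dense set contains a dense Mycielski set. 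Both work; the paper's argument is lighter (no measurable selection, no injective-image theorem) and produces the Cantor pieces explicitly, whereas yours shows that the result falls out of standard descriptive-set-theoretic machinery once one has a Borel $c$-dense scrambled set. One very small point: in your verification you apply the observation with two possibly different exponents $n_i,n_j$; strictly, just pass to $n=\max(n_i,n_j)$ so that $f^n(x)=f^n(a)$ and $f^n(y)=f^n(b)$ simultaneously, exactly as the paper does.
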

\begin{proof}
Let $\{V_j:j\in \mathbb N\}$ be a countable base for the topology of $X$ consisting of nonempty open sets of $X$, and for each $j\in \mathbb N$, let $U_j\subset X$ be a nonempty open set with $\overline{U_j}\subset V_j$. In the case that we assume only that $S$ is uncountable, choose pairwise disjoint uncountable sets $S_1,S_2,\ldots \subset S$. If we assume more, that is $S$ is a Cantor set, choose pairwise disjoint Cantor sets $S_1,S_2,\ldots \subset S$. Fix $j\in \mathbb N$, and let $n\in \mathbb N$ be such that $f^n(S_j)\subset f^n(U_j)$. Note that every power of $f$ is injective on $S_j$ since $S$ is scrambled.

First consider the case where $S$ is only uncountable. Then we can find an uncountable set $T_j\subset U_j$ and a bijection $\psi\colon S_j\to T_j$ such that $f^n(\psi(x))=f^n(x)$ for every $x\in S_j$. Put $T=\bigcup_{j=1}^\infty T_j$, which is clearly $c$-dense.

Next consider the case where $S$ is a Cantor set. Then $f^n(S_j)$ is a Cantor set since $f^n$ is injective and continuous on $S_j$. Let $X_j=\overline{U_j}\cap f^{-n}(f^n(S_j))$. Then $X_j$ is compact and $f^n\colon X_j\to f^n(S_j)$ is a continuous surjection. By \cite[Remark~4.3.6]{Sriv}, there is a Cantor set $T_j\subset X_j\subset V_j$ such that $f^n$ is injective on $T_j$ and $f^n(T_j)\subset f^n(S_j)$. Observe that $f^k$ is injective on $T_j$ for all $k\in \mathbb N$ (simply, use the injectivity of $f^k$ on $S_j$ for $k>n$). Put $T=\bigcup_{j=1}^\infty T_j$, and note that $T$ is a dense Mycielski set.

We now verify that $T$ is syndetically scrambled for $f$ in both the cases, as follows. Let $a,b\in T$ be distinct, and suppose $a\in T_i$, $b\in T_j$. Let $x\in S_i$, $y\in S_j$, $n,m\in \mathbb N$ be such that $f^n(x)=f^n(a)$ and $f^m(y)=f^m(b)$. Let if possible, $x=y$. Then $i=j$ and therefore $n=m$. This implies $f^n(a)=f^n(b)$, contradicting the injectivity of $f^n$ on $T_j$. Therefore we must have $x\ne y$, and hence $(x,y)\in \SProx(f)\setminus \Asy(f)$. Since
$f^{n+m+t}(a)=f^{n+m+t}(x)$ and $f^{n+m+t}(b)=f^{n+m+t}(y)$ for every $t\in \mathbb N$, we conclude that $(a,b)\in \SProx(f)\setminus \Asy(f)$. From this argument, the last assertion of the Lemma is also evident.
\end{proof}

The next Lemma is similar to the above, but the new ingredient is $f$-invariance, and for this we need deeper  topological arguments.

\begin{lem}\label{InvMyc}
Let $(X,f)$ be a dynamical system, and let $S\subset X$ be an $f$-invariant syndetically scrambled set for $f$ containing a Cantor set $K$. Suppose that for any nonempty open set $U\subset X$, there is $n\in \mathbb N$ such that $f^n(K)\subset f^n(U)$. Then, $f$ has a dense Mycielski $f$-invariant syndetically scrambled set $T$. Moreover, $T$ is $\eps$-scrambled if $S$ is $\eps$-scrambled.
\end{lem}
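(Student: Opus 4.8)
The plan is to bootstrap Lemma~\ref{cdense} by producing, from $K$, a dense Mycielski set that is additionally forced to be $f$-invariant. The obvious first attempt—apply Lemma~\ref{cdense} to $S$ to get a dense Mycielski set $T_0$, then replace $T_0$ by $\bigcup_{k\ge 0} f^k(T_0)$—fails in general because the forward iterates of a Cantor set need not stay Cantor (a power of $f$ may collapse it), so invariance and the Mycielski structure clash. The cure, and the main new ingredient, is to work inside the orbit closure picture: since $K$ is scrambled, every power of $f$ is injective on $K$, so $f$ restricted to $\bigcup_{k\ge 0} f^k(K)$ is essentially a homeomorphism onto its image shifted by one; the trouble is only that distinct points of $K$ could have a common forward iterate with each other. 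But that is exactly excluded by $S$ (hence $K$) being scrambled, so $f|_K$ and all its iterates are injective, and the orbit of $K$ is a nested, increasing union of continuous injective images of $K$—each of which is again a Cantor set. So $\bigcup_{k\ge 0} f^k(K)$ is itself a Mycielski, $f$-invariant, syndetically scrambled set (it is scrambled because scrambledness of a pair is preserved under applying or pre-composing with powers of $f$, a fact recorded in the Preliminaries); what it is \emph{not}, a priori, is dense.

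To get density, I would run the argument of Lemma~\ref{cdense} but, instead of just transporting $S_j$ into $V_j$ by a homeomorphism, transport a Cantor piece and then take its full forward orbit. Concretely: fix a countable base $\{V_j\}$ with $\overline{U_j}\subset V_j$, split $K$ into pairwise disjoint Cantor subsets $K_1,K_2,\dots$, and for each $j$ pick $n_j$ with $f^{n_j}(K_j)\subset f^{n_j}(U_j)$. As in Lemma~\ref{cdense}, using \cite[Remark~4.3.6]{Sriv} choose a Cantor set $C_j\subset \overline{U_j}\cap f^{-n_j}(f^{n_j}(K_j))\subset V_j$ on which $f^{n_j}$ is injective with $f^{n_j}(C_j)\subset f^{n_j}(K_j)$; then every power of $f$ is injective on $C_j$ (for $k\ge n_j$ because $f^k$ is injective on $K_j$, for $k<n_j$ because injectivity of $f^{n_j}$ on $C_j$ forces injectivity of each earlier iterate). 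Now set $T=\bigcup_{j\ge 1}\bigcup_{k\ge 0} f^k(C_j)\cup\bigcup_{k\ge 0} f^k(K)$. Each $f^k(C_j)$ is a Cantor set by injectivity and continuity, so $T$ is a countable union of Cantor sets, i.e.\ Mycielski; it is dense because $C_j\subset V_j$ for all $j$; and it is manifestly $f$-invariant by construction.

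It remains to check that $T$ is syndetically scrambled. Take distinct $a,b\in T$. Write $a=f^{p}(x)$ and $b=f^{q}(y)$ where $x$ lies in some $C_i$ or in $K$, and $y$ in some $C_j$ or in $K$. Using the appropriate $n_i,n_j$ from the construction, pass to $f$-iterates that push $x$ and $y$ into $K$ itself: there are $r,s$ and points $x',y'\in K$ with $f^{r}(a)=x'$ and $f^{s}(b)=y'$ (absorbing the $p,q$ shifts and the $n_i,n_j$ transport into $r,s$; when $x\in K$ already we just take the relevant power). If $x'=y'$ then applying a common further power and unwinding the injectivity of all iterates on each Cantor piece forces $a=b$, a contradiction; so $x'\ne y'$, hence $(x',y')\in\SProx(f)\setminus\Asy(f)$ because $x',y'\in K\subset S$. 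Finally, $(a,b)$ inherits this: for all large $t$, $f^{t}(a)$ and $f^{t}(b)$ equal $f^{t-r}(x')$ and $f^{t-s}(y')$ up to the bounded shift $|r-s|$, and since $(x',y')\in\SProx(f)\setminus\Asy(f)$ and these relations are invariant under powers of $f$ (and a fixed finite shift only changes a syndetic set to another syndetic set), we get $(a,b)\in\SProx(f)\setminus\Asy(f)$; the same shift-insensitive estimate shows $T$ is $\eps$-scrambled whenever $S$ is. The main obstacle is the bookkeeping in this last step—tracking the two independent shifts $p,q$ together with the transport powers $n_i,n_j$ and checking that the resulting offset between the orbits of $a$ and $b$ is bounded—but once one observes that a bounded offset turns a syndetic set into a syndetic set and preserves $\liminf=0$ and $\limsup\ge\eps$, the argument closes.
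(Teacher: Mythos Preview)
Your overall strategy matches the paper's---transport Cantor pieces of $K$ into each basic open set as in Lemma~\ref{cdense}, then close under forward iteration---but there is a genuine gap at the step ``If $x'=y'$ then \ldots\ unwinding the injectivity of all iterates on each Cantor piece forces $a=b$.'' Injectivity of every power of $f$ on each individual $C_j$ (hence on each $f^k(C_j)$) does \emph{not} give injectivity across different pieces $f^p(C_i)$ and $f^q(C_j)$ when $(i,p)\ne(j,q)$, and that is exactly what is needed. Concretely: nothing rules out $K$ containing two distinct points $u$ and $v=f(u)$ (the hypotheses do not force $K\cap f(K)=\emptyset$); put both in $K_1$. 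If $x,y\in C_1$ satisfy $f^{n_1}(x)=f^{n_1}(u)$ and $f^{n_1}(y)=f^{n_1}(v)$, set $a=f(x)\in f(C_1)\subset T$ and $b=y\in C_1\subset T$. Then $f^{n_1}(a)=f^{n_1+1}(u)=f^{n_1}(v)=f^{n_1}(b)$, so $(a,b)\in\Asy(f)$; yet $a=f(x)$ need not equal $y$ (indeed $f(x)$ need not lie in $C_1$ at all), so $T$ may contain a distinct asymptotic pair and fail to be scrambled.

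The paper repairs precisely this point: before splitting into pieces it refines $K$ to a Cantor subset $C$ in which distinct points have disjoint forward orbits. On $K$ one considers the equivalence relation $x\sim y\iff f^n(x)=f^m(y)$ for some $n,m\ge 0$; its classes are countable (since every $f^n$ is injective on $K$) and the relation is $F_\sigma$, so a Borel selection theorem (Theorem~5.13.9 of \cite{Sriv}) produces a Cantor $C\subset K$ meeting each class in at most one point. Taking the disjoint Cantor pieces inside $C$ rather than inside $K$ gives the key property $f^n(C_i)\cap f^m(C_j)=\emptyset$ for $(i,n)\ne(j,m)$, and with that in hand your unwinding argument does close: if the $S$-representatives of $a=f^p(x)$ and $b=f^q(y)$ coincide then $(i,p)=(j,q)$, whence $x=y$ and $a=b$.
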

\begin{proof}
Note that all powers of $f$ are injective on $K$ since $K$ is contained in a scrambled set. Define an equivalence relation on $K$ by saying that for $x,y\in K$ we have $x\sim y$ iff $f^n(x)=f^m(y)$ for some $n,m\ge 0$. Then each equivalence class is countable since the powers of $f$ are injective on $K$. Also the equivalence relation is $F_\sigma$ in $K^2$ since $\{(x,y)\in K^2:x\sim y\}=\bigcup_{n=0}^\infty \bigcup_{m=0}^\infty \{(x,y)\in K^2:f^n(x)=f^m(y)\}$. Therefore, by Theorem 5.13.9 and Theorem 3.2.7 of \cite{Sriv}, there is a Cantor set $C\subset K$ such that $C$ intersects each equivalence class in at most one element. Choose pairwise disjoint Cantor sets $C_1,C_2,\ldots \subset C$. By the above construction we observe that $f^n(C_i)\cap f^m(C_j)\ne \emptyset$ iff $(i,n)=(j,m)$ for $i,j\in \mathbb N$ and $n,m\ge 0$.

Let $\{V_j:j\in \mathbb N\}$ be a countable base of nonempty open sets for $X$. If we fix $j\in \mathbb N$, then as in the proof of Lemma \ref{cdense} we can find $n\in \mathbb N$ and a Cantor set $T_j\subset V_j$ such that $f^n(T_j)\subset f^n(C_j)$ and $f^n$ is injective on $T_j$. As before, we also note that $f^k$ must be injective on $T_j$ for all $k\in \mathbb N$.

Put $T=\bigcup_{k=0}^\infty\bigcup_{j=1}^\infty f^k(T_j)$. Since $f^k$ is injective on $T_j$, the set $f^k(T_j)$ is Cantor, and therefore $T$ is a Mycielski set. Clearly $T$ is dense and $f$-invariant also.

Consider two distinct points $f^p(a),f^q(b)\in T$, where $a\in T_i$, $b\in T_j$ and $p,q\ge 0$. Then there are $x\in C_i, y\in C_j$ and $n,m\in \mathbb N$ such that $f^n(x)=f^n(a)$ and $f^m(y)=f^m(b)$. If $f^p(x)=f^q(y)$, then $(i,p)=(j,q)$ by our construction. Hence $a,b\in T_j$ and so $n=m$. Then
$f^{n+p}(a)=f^{n+p}(x)=f^{n+p}(y)=f^{n+p}(b)$ and consequently $a=b$ since $f^{n+p}$ is injective on $T_j$. This gives $f^p(a)=f^p(b)=f^q(b)$, a contradiction. Therefore we must have $f^p(x)\ne f^q(y)$. Then $(f^p(x),f^q(y))\in \SProx(f)\setminus \Asy(f)$ since $x$ and $y$ come from the $f$-invariant syndetically scrambled set $S$. For any $t\in \mathbb N$, we have $f^{n+m+t}(f^p(a))=f^{n+m+t}(f^p(x))$ and  $f^{n+m+t}(f^q(b))=f^{n+m+t}(f^q(y))$ and hence $(f^p(a),f^q(b))\in \SProx(f)\setminus \Asy(f)$.
\end{proof}

\subsection{Mycielski within Mycielski}

Sometimes it may be relatively easy to produce a dense Mycielski set $S=\bigcup_{k=1}^\infty S_k$ where $S_k$'s are Cantor sets, and $\eps>0$ such that any two elements of $S$ are syndetically proximal, and each $S_k$ is syndetically $\eps$-scrambled. The useful observation is that this data is sufficient to produce a dense Mycielski $\eps'$-scrambled set. To prove this, first we need an auxiliary Lemma.

\begin{lem}\label{lem:aux1}
Let $(X,f)$ be a dynamical system, $C\subset X$ be a perfect set, let $\eps>0$, $k\in \mathbb N$, and assume that for every open set $U$ with $C\cap U\neq \emptyset$, there is $n\ge k$ such that $\diam[f^n(C\cap U)]>2\eps$. Let $t\ge 2$ and let $U_1,\ldots ,U_t\subset X$ be open sets with $C\cap U_i\ne \emptyset$ for $1\le i\le t$. Then:
\begin{enumerate}[(i)]
\item\label{313:i} There are $x_i\in C\cap U_i$ for $1\le i\le t$ with the property that for every $1\le i<j\le t$ there is $n=n(i,j)\ge k$ with $d(f^n(x_i),f^n(x_j))>\eps$.

\item\label{313:ii} There are open sets $V_1,\ldots ,V_t \subset X$ such that $\overline{V_i}\subset U_i$, $V_i\cap C\ne \emptyset$ and such that for every $1\le i<j\le t$, there is $n=n(i,j)\ge k$ with $dist (f^n(\overline{V_i}), f^n(\overline{V_j}))>\eps$.
\end{enumerate}
\end{lem}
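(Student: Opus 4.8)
The plan is to first establish part (\ref{313:i}) by a finite recursive selection, then derive part (\ref{313:ii}) from (\ref{313:i}) by a routine shrinking argument using continuity of the relevant iterates.

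For part (\ref{313:i}), I would proceed by induction on $t$ to construct nested perfect subsets in which the pairwise separation becomes available one pair at a time. Actually, a cleaner route: enumerate the $\binom{t}{2}$ pairs $(i,j)$ with $1\le i<j\le t$ as $(i_1,j_1),\ldots,(i_N,j_N)$, and build a decreasing sequence of tuples of nonempty open sets $U_i = U_i^{(0)} \supseteq U_i^{(1)} \supseteq \cdots \supseteq U_i^{(N)}$, all meeting $C$, so that after step $\ell$ the pair $(i_\ell,j_\ell)$ is ``taken care of.'' At step $\ell$, look at the pair $(i,j)=(i_\ell,j_\ell)$. Since $C\cap U_i^{(\ell-1)}$ is a nonempty relatively open subset of the perfect set $C$, it is itself a nonempty open set meeting $C$, so by hypothesis there is $n=n(i,j)\ge k$ with $\diam[f^n(C\cap U_i^{(\ell-1)})]>2\eps$. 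Pick two points $p,q\in C\cap U_i^{(\ell-1)}$ with $d(f^n(p),f^n(q))>2\eps$. Since $C$ is perfect (hence has no isolated points), and $U_j^{(\ell-1)}\cap C$ is nonempty open in $C$, I can choose a point $r\in C\cap U_j^{(\ell-1)}$; then by the triangle inequality at least one of $d(f^n(p),f^n(r))$, $d(f^n(q),f^n(r))$ exceeds $\eps$ — say $d(f^n(p),f^n(r))>\eps$. Now shrink: let $U_i^{(\ell)}$ be a small open neighborhood of $p$ with $\overline{U_i^{(\ell)}}\subset U_i^{(\ell-1)}$ and $d(f^n(x),f^n(p))<\tfrac{1}{2}(d(f^n(p),f^n(r))-\eps)$ for all $x\in U_i^{(\ell)}$, similarly $U_j^{(\ell)}$ a neighborhood of $r$ inside $U_j^{(\ell-1)}$, and leave all other $U_m^{(\ell)}=U_m^{(\ell-1)}$ unchanged; all still meet $C$ since $p\in C$, $r\in C$. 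This guarantees $d(f^n(x),f^n(y))>\eps$ for all $x\in U_i^{(\ell)}$, $y\in U_j^{(\ell)}$, and this property is preserved under further shrinking. After $N$ steps pick any $x_i\in C\cap U_i^{(N)}$; these satisfy (\ref{313:i}).

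In fact the above recursion already delivers part (\ref{313:ii}): set $V_i := U_i^{(N)}$. By construction $\overline{V_i}\subset U_i^{(N-1)}\subset\cdots\subset U_i$ — one should take the neighborhoods small enough at each step that the closure stays inside the previous set — $V_i\cap C\ne\emptyset$, and for each pair $(i,j)=(i_\ell,j_\ell)$ we have $d(f^{n}(x),f^{n}(y))>\eps$ for all $x\in V_i\subset U_i^{(\ell)}$ and $y\in V_j\subset U_j^{(\ell)}$ with $n=n(i,j)\ge k$; taking the infimum over $x\in\overline{V_i}$, $y\in\overline{V_j}$ and using compactness and continuity of $f^n$ gives $\dist(f^n(\overline{V_i}),f^n(\overline{V_j}))\ge\eps>0$ — to get strict inequality $>\eps$ one simply arranges the shrinking with a slightly larger target, e.g. demanding $d(f^n(x),f^n(y))>\eps+\delta$ for some small $\delta>0$ at the selection step, which the $2\eps$ slack in the hypothesis comfortably allows (choose the two points $p,q$ with $d(f^n(p),f^n(q))>2\eps$, so after the triangle inequality one of the gaps exceeds $\eps$, and if it is close to $\eps$ we instead note $\diam>2\eps$ forces a uniformly better choice; cleaner: pick $p,q$ with $d(f^n(p),f^n(q))$ within $\eps/2$ of $\diam[f^n(C\cap U_i^{(\ell-1)})]>2\eps$, so this distance exceeds $3\eps/2$, and then one of $d(f^n(p),f^n(r)),d(f^n(q),f^n(r))$ exceeds $3\eps/4>\eps$... ). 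The only mild nuisance, and the one step deserving care, is bookkeeping the strictness of the inequality through the passage to closures; everything else is a straightforward finite recursion exploiting that $C$ is perfect (so every $C\cap U$ is again a legitimate ``open set meeting $C$'' to which the hypothesis applies) and that the $n$ produced is always $\ge k$.
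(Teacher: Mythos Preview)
Your proposal is correct and follows essentially the same approach as the paper: both use the key observation that $\diam[f^n(C\cap U)]>2\eps$ yields two points $p,q$ in $C\cap U$ with $d(f^n(p),f^n(q))>2\eps$, so for any third point $r$ one of $p,q$ is $\eps$-separated from $r$ at time $n$ by the triangle inequality, and then one shrinks to neighborhoods by continuity. The only organizational difference is that the paper does induction on the index $s$ (fixing $x_1,\ldots,x_{s-1}$ and shrinking only inside $U_s$ via a decreasing chain $W_{s,1}\supset\cdots\supset W_{s,s-1}$), whereas you enumerate all $\binom{t}{2}$ pairs and shrink both relevant sets at each step; these are interchangeable. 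Your worry about strictness in part~\eqref{313:ii} is overblown: since the \emph{points} $x_i$ from part~\eqref{313:i} satisfy the strict inequality $d(f^{n(i,j)}(x_i),f^{n(i,j)}(x_j))>\eps$, continuity of the finitely many maps $f^{n(i,j)}$ immediately gives $\dist(f^{n(i,j)}(\overline{V_i}),f^{n(i,j)}(\overline{V_j}))>\eps$ for sufficiently small neighborhoods $V_i\ni x_i$ --- this is exactly how the paper handles~\eqref{313:ii} in one line, and no $3\eps/4$-type detour is needed.
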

\begin{proof}
First we prove \eqref{313:i}. Let $x_1\in C\cap U_1$. By hypothesis, there are $a,b\in C\cap U_2$ and $n\ge k$ such that $d(f^n(a),f^n(b))>2\eps$. Then either $d(f^n(x_1),f^n(a))>\eps$ or $d(f^n(x_1),f^n(b))>\eps$. So there is $x_2\in \{a,b\}\subset C\cap U_2$ such that $d(f^n(x_1),f^n(x_2))>\eps$. Now let $2\le s\le t$, and assume we have chosen $x_i\in C\cap U_i$ for $1\le i<s$ and $n(i,j)\ge k$ for $1\le i<j<s$ such that $d(f^{n(i,j)}(x_i),f^{n(i,j)}(x_j))>\eps$. If we consider $x_1$ and $U_s$, then as in the first step we can find $y\in C\cap U_s$ and $n(1,s)\ge k$ such that $d(f^{n(1,s)}(x_1),f^{n(1,s)}(y))>\eps$. Then there is an open set $W_{s,1}$ such that $y\in W_{s,1}\subset U_s$ and $\dist(f^{n(1,s)}(x_1),f^{n(1,s)}(\overline{W_{s,1}}))>\eps$. Next we repeat the same argument by considering $x_2$ and $W_{s,1}$, etc. Thus we can construct a finite decreasing sequence $W_{s,1}\supset W_{s,2}\supset \cdots \supset W_{s,s-1}$ of open sets intersecting $C$ and $n(i,s)\ge k$ for $1\le i<s$ such that $\dist(f^{n(i,s)}(x_i),f^{n(i,s)}(\overline{W_{s,i}}))>\eps$. Let $x_s$ be any point from $C\cap W_{s,s-1}$. The proof of \eqref{313:i} is completed.

To prove \eqref{313:ii} it is enough to choose points $x_1,\ldots,x_t$ as in part \eqref{313:i}, and then choose sufficiently small open neighborhoods $V_i$ of $x_i$ for $1\le i\le t$.
\end{proof}

\begin{prop}\label{joinCantScra}
Let $(X,f)$ be a dynamical system. Suppose there are $\eps>0$ and a dense Mycielski set $S\subset X$ with the following properties:

\noindent (i) $S^2\subset \SProx(f)$.

\noindent (ii) For any nonempty open set $W\subset X$, there is a Cantor set $C\subset S\cap W$ such that $C$ is syndetically $3\eps$-scrambled for $f$.

Then $f$ has a dense Mycielski syndetically $\eps$-scrambled set $T$ with  $T\subset S$.
\end{prop}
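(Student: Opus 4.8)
The plan is to build $T$ as a countable union of Cantor sets $T = \bigcup_{j} T_j$, one nested inside each basic open set $V_j$ of a fixed countable base, arranged so that (a) each $T_j$ lies inside some Cantor set $C_j \subset S \cap V_j$ of the type furnished by hypothesis (ii), and (b) distinct points drawn from $T_i$ and $T_j$ (for possibly equal or distinct $i,j$) are separated by at least $\eps$ infinitely often under the dynamics. Property (a) guarantees $T \subset S$, hence by hypothesis (i) any two points of $T$ are syndetically proximal; together with (b) this makes $T$ a syndetically $\eps$-scrambled set. Density and the Mycielski property come for free from the construction. The one genuinely new ingredient to produce is the separation (b), and that is exactly what Lemma \ref{lem:aux1} is designed to extract: a $3\eps$-scrambled Cantor set $C$ satisfies the diameter-expansion hypothesis of Lemma \ref{lem:aux1} with $\eps$ replaced by $\eps$ (since any two distinct points of $C$ satisfy $\limsup d(f^n x, f^n y) \ge 3\eps$, every nonempty relatively open piece of $C$ has $f^n$-image of diameter $> 2\eps$ for arbitrarily large $n$).

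First I would fix a countable base $\{V_j : j \in \mathbb N\}$ of nonempty open sets for $X$, and shrink each $V_j$ to a nonempty open $W_j$ with $\overline{W_j} \subset V_j$. For each $j$, apply hypothesis (ii) to $W_j$ to obtain a Cantor set $C_j \subset S \cap W_j$ that is syndetically $3\eps$-scrambled; as noted above, $C_j$ satisfies the hypothesis of Lemma \ref{lem:aux1} for any prescribed threshold $k$. The subtlety is that the separation we ultimately need is \emph{across different indices} $i \ne j$ as well as within a single $C_j$, whereas Lemma \ref{lem:aux1} only separates finitely many open sets relative to a single perfect set $C$. The standard device to handle this is an inductive / diagonal construction: enumerate all the "separation requirements" — pairs of basic open sets, or more precisely, at stage $N$ we work with the first $N$ indices and a finite collection of shrinking open sets inside $V_1, \ldots, V_N$ — and at each stage invoke Lemma \ref{lem:aux1}(ii) to pass to smaller open sets on which one more required pair of closed sets is $\eps$-separated at some time $n \ge k_N$, where $k_N \to \infty$. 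Since each $C_j$ meets every open set we shrink to (we always stay inside the closures produced at the previous stage, which still meet $C_j$), Lemma \ref{lem:aux1}(ii) applies at every stage. Taking the nested intersection of the open sets assigned to index $j$ and intersecting with $C_j$ yields a Cantor set $T_j \subset C_j \subset S \cap V_j$, and the intersection being over a decreasing sequence of sets each meeting the Cantor set $C_j$ keeps $T_j$ nonempty and perfect; a routine refinement (as in the proof of Lemma \ref{cdense}, using \cite[Remark~4.3.6]{Sriv} if needed) ensures $T_j$ is Cantor.

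With the $T_j$ in hand, set $T = \bigcup_{j=1}^\infty T_j$. Density is immediate since $T_j \subset V_j$ and the $V_j$ form a base; $T$ is Mycielski as a countable union of Cantor sets; and $T \subset S$ since each $T_j \subset C_j \subset S$. For the scrambling: given distinct $a,b \in T$, say $a \in T_i$, $b \in T_j$, hypothesis (i) gives $(a,b) \in \SProx(f)$ immediately. For the $\limsup$ lower bound, the construction has arranged that at some stage $N$ the pair of closed sets containing $a$ and $b$ respectively were separated: $\dist(f^{n}(\overline{V}), f^{n}(\overline{V}')) > \eps$ for some $n \ge k_N$, and in fact for infinitely many such $n$ because the stages force the separation times to grow without bound while each stage only adds constraints. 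Hence $d(f^n a, f^n b) > \eps$ infinitely often, so $\limsup_{n\to\infty} d(f^n a, f^n b) \ge \eps$ and $(a,b) \notin \Asy(f)$. Therefore $T$ is a dense Mycielski syndetically $\eps$-scrambled set contained in $S$.

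The main obstacle is purely bookkeeping: organizing the inductive shrinking so that \emph{every} unordered pair of points that can ever land in $T$ gets separated at arbitrarily large times, using only the finitary Lemma \ref{lem:aux1}. The clean way is to interleave, at stage $N$, a new separation requirement for one more pair among the first $N$ open-set-blocks together with a standard Cantor-scheme refinement of each block into two, so that after all stages each block $T_j$ is genuinely a Cantor set and any two points in $\bigcup_j T_j$ eventually get routed into two disjoint sub-blocks that were explicitly separated at a time $\ge k_N$ for some $N$ — and since $k_N \to \infty$ and later stages never destroy earlier separations, the separation in fact recurs infinitely often, giving the $\limsup \ge \eps$ we need.
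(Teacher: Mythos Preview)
Your proposal is correct and follows essentially the same route as the paper: fix a countable base, extract a $3\eps$-scrambled Cantor set $C_p$ inside each basic open set via hypothesis~(ii), then run a Cantor-scheme refinement driven by Lemma~\ref{lem:aux1} so that at stage $m$ every pair of current sub-blocks is $\eps$-separated at some time $\ge m$. The paper's explicit bookkeeping uses open sets $V(m,p,w)$ indexed by stage $m$, base index $p\le m$, and binary word $w\in\{0,1\}^{m+1-p}$ with $\diam[V(m,p,w)]<1/m$, which is exactly the interleaved splitting-plus-separation scheme you sketch in your final paragraph.
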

\begin{proof}
Fix a countable base $\{W_p:p\in \mathbb N\}$ consisting of nonempty open sets for the topology of $X$. Let $C_1\subset S\cap W_1$ be a syndetically $3\eps$-scrambled set for $f$. Replacing $C_1$ with a smaller Cantor subset, we may assume that $C_1$ is nowhere dense in $X$ (e.g. $C_1$ is homeomorphic with $C_1\times C_1$, where any vertical section is nowhere dense). If $p>1$, then at the $p$-th step, note that $W_p':=W_p\setminus \bigcup_{q=1}^{p-1}C_q$ is a nonempty open set since $C_q$'s are nowhere dense closed sets. So there is a Cantor set $C_p\subset S\cap W_p'$ such that $C_p$ is syndetically $3\eps$-scrambled for $f$. Again we may assume that $C_p$ is nowhere dense in $X$. By this construction, $C_p$'s are pairwise disjoint Cantor sets with $C_p\subset S\cap W_p$, and $\bigcup_{p=1}^\infty C_p\subset S$. Moreover, each $C_p$ is a syndetically $3\eps$-scrambled set for $f$. By the last assertion note that if $U\subset X$ is an open set with $C_p\cap U\ne \emptyset$, then $\diam[f^n(C_p\cap U)]>2\eps$ for infinitely many $n\in \mathbb N$.

Let $V(1,1,0),V(1,1,1)\subset W_1$ be open sets with disjoint closures such that $C_1\cap V(1,1,a)\ne \emptyset$ and $\diam [V(1,1,a)] < 1$ for $a=0,1$. Replacing these two open sets by smaller ones with the help of Lemma~\ref{lem:aux1}, we may also assume there is $n\ge 1$ such that $\dist(f^n(\overline{V(1,1,0)}),f^n(\overline{V(1,1,1)}))>\eps$.

By induction on $m\in \mathbb N$, we are going to choose open sets $V(m,p,w)\subset W_p$ for all $m\geq 1$, $1\le p\le m$ and $w\in \{0,1\}^{m+1-p}$ such that:

\begin{enumerate}[(i)]
\item $\overline{V(m,p,w)}\cap \overline{V(m,q,u)}=\emptyset$ if $(p,w)\ne (q,u)$.

\item $C_p\cap V(m,p,w)\ne \emptyset$.

\item $\diam [V(m,p,w)]<1/m$.

\item $V(m,p,w)\subset V(m-1,p,v)$ if $w=va$ for some $a\in \{0,1\}$.

\item If $(p,w)\neq (q,u)$, then there is $n\ge m$ with the property that
$$\dist(f^n(\overline{V(m,p,w)}),f^n(\overline{V(m,q,u)}))>\eps.$$
\end{enumerate}

Conditions (i)-(iv) are very easy to satisfy, and condition (v) can be obtained by an application of Lemma~\ref{lem:aux1}.

For each fixed integer $p\in \mathbb N$, put $K_p=\bigcap_{m=p}^\infty \bigcup_{w\in \{0,1\}^{m+1-p}} \overline{V(m,p,w)}$. By the construction we easily get that $K_p$ is a Cantor set and by properties (ii), (iii) and the compactness of $C_p$, we also have $K_p\subset C_p\subset S\cap W_p$. Therefore $T:=\bigcup_{p=1}^\infty K_p$ is a dense Mycielski subset of $X$ with $T\subset S$. Clearly $T^2\subset \SProx(f)$ since $S^2\subset \SProx(f)$.

Now consider two distinct points $y_1,y_2\in T$ and fix any $k\in \mathbb N$. Let $m\ge k$ be such that $1/m<d(y_1,y_2)$ and choose $p,q\in \{1,\ldots,m\}$, and words $w\in \{0,1\}^{m+1-p}$, $u\in \{0,1\}^{m+1-q}$ such that $y_1\in \overline{V(m,p,w)}$ and $y_2\in \overline{V(m,q,u)}$. The choice of $m$ guarantees by property (iii) that $(p,w)\neq (q,u)$. Then by property (v), there is $n\ge m> k$ such that $d(f^n(y_1),f^n(y_2))>\eps$. We conclude that $\limsup_{n\to \infty} d(f^n(y_1),f^n(y_2))\ge \eps$, since $k\in \mathbb N$ was arbitrary. So $T$ is syndetically $\eps$-scrambled for $f$.
\end{proof}

\subsection{Through factor maps}

The aim of this subsection is to present a few conditions that help to transfer (syndetically) scrambled sets to factors. Factor maps preserve (syndetically) proximal pairs, but a non-asymptotic pair may be taken to an asymptotic pair by the factor map. Therefore, to ensure the existence of a (syndetically) scrambled set with possibly additional properties in the factor system, extra assumptions are required. Here we offer a few results of this kind and later, in coming sections we will present their possible applications.

Recall that a factor map $\pi\colon (X,f)\to (Y,g)$ is proximal, if $\pi(x)=\pi(y)$ implies that $(x,y)\in \Prox (f)$.
Denote by $R_\pi=\set{(x,y) \in X\times X : \pi(x)=\pi(y)}$ and note that if $\pi$ is proximal then $R_\pi\subset \Prox(f)$.
We start this subsection with the following useful fact.

\begin{lem}
If $(Y,g)$ is distal and $\pi\colon (X,f)\to (Y,g)$ is proximal then $R_\pi=\Prox(f)=\SProx(f)$.
\end{lem}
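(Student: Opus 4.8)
The plan is to prove the chain of containments $\SProx(f)\subset\Prox(f)\subset R_\pi\subset\SProx(f)$ and close the loop. The first containment is immediate from the definitions (a syndetic set is nonempty, so $\liminf$ is zero). For the second, I would use that $\pi$ is a factor map: if $(x,y)\in\Prox(f)$ then $(\pi(x),\pi(y))\in\Prox(g)$ because $d(g^n\pi(x),g^n\pi(y))=d(\pi f^n(x),\pi f^n(y))$ is small whenever $d(f^n(x),f^n(y))$ is small, by uniform continuity of $\pi$. Since $(Y,g)$ is distal, it has no proper proximal pairs, so $\pi(x)=\pi(y)$, i.e. $(x,y)\in R_\pi$. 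The third containment, $R_\pi\subset\SProx(f)$, is the substantive step and the one I expect to be the main obstacle.

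To handle $R_\pi\subset\SProx(f)$, fix $(x,y)\in R_\pi$ and $\eps>0$; I must show $\{n:d(f^n(x),f^n(y))<\eps\}$ is syndetic. The idea is to exploit distality of $(Y,g)$ together with properness of $\pi$. Suppose the set is not syndetic; then its complement $\{n:d(f^n(x),f^n(y))\ge\eps\}$ is thick, so there are arbitrarily long blocks of consecutive integers $n$ with $d(f^n(x),f^n(y))\ge\eps$. Passing to a suitable limit (using compactness of $X\times X$ and of the orbit closures), I would extract a pair $(x',y')$ in the orbit closure of $(x,y)$ with $d(f^n(x'),f^n(y'))\ge\eps$ for all $n\ge 0$ — so $(x',y')\notin\Prox(f)$ and in particular $x'\ne y'$. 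On the other hand, $\pi(x')=\pi(y')$ still holds, because $R_\pi$ is closed and $f\times f$-invariant (it is a factor relation). This contradicts $R_\pi\subset\Prox(f)$, which we established in the previous paragraph (it follows from distality of $(Y,g)$ and $\pi$ being a factor map; one does not even need $\pi$ proximal here once distality is assumed, though the hypothesis gives it directly).

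I would also remark that the hypothesis ``$\pi$ proximal'' is used to guarantee $R_\pi\subset\Prox(f)$ in the first place; given this, the equality $R_\pi=\Prox(f)$ comes from the distality argument above (distal factors collapse all proximal pairs of the extension into fibers of $\pi$). The remaining equality $\Prox(f)=\SProx(f)$ is exactly the content of the contradiction argument: any pair that is proximal but fails to be syndetically proximal would, via a thickness-to-orbit-limit construction, produce a non-proximal pair inside $R_\pi$, impossible. The key obstacle is making the limiting argument clean: one should fix $\eps$, take the thick set of ``bad'' times, choose centers $m_j\to\infty$ of bad blocks of length $2j$, pass to a convergent subsequence of $(f^{m_j}(x),f^{m_j}(y))$, and check that the limit pair is $\eps$-separated along the entire forward orbit (using that $f^i$ of the limit is the limit of $f^{m_j+i}$, which stays bad for $|i|\le j$). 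I would state this carefully as a short compactness lemma and then apply it.
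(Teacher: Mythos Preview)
Your proof is correct and follows essentially the same route as the paper's. One small slip: you write that $R_\pi\subset\Prox(f)$ ``follows from distality of $(Y,g)$ and $\pi$ being a factor map; one does not even need $\pi$ proximal''. This is backwards --- distality of $(Y,g)$ gives the \emph{reverse} inclusion $\Prox(f)\subset R_\pi$ (which is exactly what you proved in your second step). The inclusion $R_\pi\subset\Prox(f)$ is precisely the hypothesis that $\pi$ is proximal, and this is the place where that hypothesis is used. With that attribution corrected, your contradiction goes through as written.

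The only difference in packaging is that the paper, instead of extracting a single limit pair $(x',y')$ whose forward orbit stays outside a neighborhood $U$ of the diagonal, passes one step further to a minimal set $M\subset R_\pi\setminus U$ for $f\times f$, and then observes that a minimal set disjoint from $\Delta$ can contain no proximal pair, contradicting $M\subset R_\pi\subset\Prox(f)$. Your direct compactness-and-limit argument is equivalent and arguably a touch more elementary, since it avoids invoking minimal sets.
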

\begin{proof}
Note that if $(x,y)\in \Prox(f)$ then $(\pi(x),\pi(y))\in \Prox(g)$, hence $R_\pi=\Prox(f)$.
Observe that $R_\pi$ is closed and invariant for $f\times f$. If we fix any open set $U\supset \Delta$
and any $(x,y)\in R_\pi$ then $N((x,y),U)$ is syndetic, as otherwise there is a minimal set $M\subset R_\pi \setminus U$
which is impossible because $M\cap \Prox(f)=\emptyset$. This proves that $R_\pi=\SProx(f)$ and the proof is complete.
\end{proof}

\begin{cor}\label{prop:syprox}
Let $\pi\colon (X,f) \ra (Y,g)$ be a factor map between dynamical systems $(X,f)$, $(Y,g)$.
Assume additionally that $g$ is distal, minimal and $\pi^{-1}(z)$ is singleton for some $z\in Y$. Then $\Prox(f)=\SProx(f)$.
\end{cor}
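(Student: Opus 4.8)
The plan is to deduce Corollary~\ref{prop:syprox} from the preceding Lemma by exhibiting the hypotheses of that Lemma, the only nontrivial point being that $\pi$ is a proximal factor map. So the first step is to observe that $\Prox(f)=\SProx(f)$ will follow immediately from the Lemma once we know $\pi$ is proximal, since the Lemma gives $R_\pi=\Prox(f)=\SProx(f)$ under the assumptions that $(Y,g)$ is distal (which is part of our hypothesis) and $\pi$ is proximal. Thus the entire content of the proof is: \emph{a factor map onto a distal minimal system which is injective over at least one point is automatically proximal}.

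To prove that $\pi$ is proximal, I would take $(x,y)$ with $\pi(x)=\pi(y)=:w\in Y$ and show $(x,y)\in\Prox(f)$. The idea is to use minimality of $(Y,g)$ to push the pair near the fiber over $z$, where the fiber is a singleton, forcing $f^n(x)$ and $f^n(y)$ close together. Concretely: since $\pi^{-1}(z)=\{p\}$ is a single point and $\pi$ is continuous, for every $\eps>0$ there is a neighborhood $V\ni z$ in $Y$ with $\diam\pi^{-1}(V)<\eps$ (here one uses compactness of $X$ and continuity of $\pi$: if not, one extracts sequences in $\pi^{-1}(z_n)$ with $z_n\to z$ staying $\eps$-apart, and a limit pair would lie in $\pi^{-1}(z)$, contradicting that it is a singleton). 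Now because $(Y,g)$ is minimal, $N(w,V)=\{n:g^n(w)\in V\}$ is syndetic, in particular infinite; but $g^n(w)=g^n(\pi(x))=\pi(f^n(x))$ and likewise for $y$, so for each such $n$ both $f^n(x)$ and $f^n(y)$ lie in $\pi^{-1}(V)$, hence $d(f^n(x),f^n(y))<\eps$. Since $\eps>0$ was arbitrary this gives $\liminf_{n\to\infty}d(f^n(x),f^n(y))=0$, i.e. $(x,y)\in\Prox(f)$. Hence $\pi$ is proximal.

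With $\pi$ proximal and $(Y,g)$ distal, the Lemma applies verbatim and yields $\Prox(f)=\SProx(f)$, completing the proof.

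The main obstacle is the purely point-set topology step that injectivity of $\pi$ over the single point $z$ controls the diameter of fibers over small neighborhoods of $z$; this is where compactness of $X$ and continuity of $\pi$ must be invoked carefully, but it is standard. Everything else is a direct combination of minimality (to make the return times to $V$ nonempty/infinite — we do not even need syndeticity here, only recurrence, though minimality gives it for free) with the cited Lemma; note in particular that we never use distality of $g$ directly in proving $\pi$ is proximal, only in the final application of the Lemma.
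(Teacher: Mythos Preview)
Your proposal is correct and follows essentially the same approach as the paper: both show that $\pi$ is proximal (the paper via a subsequence argument sending $g^{n_k}(y)\to z$ and extracting limit points in $\pi^{-1}(z)$, you via the equivalent fiber-diameter formulation), and then invoke the preceding Lemma. The two arguments for proximality are minor rephrasings of one another.
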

\begin{proof}
Fix any $x_1,x_2\in X$ such that $\pi(x_1)=\pi(x_2)=y$. There exists an increasing sequence $n_k$ such that $\lim_{k\to \infty}g^{n_k}(y)=z$.
Passing to a subsequence if necessary, we may assume that there are $q_1,q_2\in X$ such that $\lim_{k\to \infty}f^{n_k}(x_1)=q_1$ and $\lim_{k\to \infty}f^{n_k}(x_1)=q_2$ and hence $\pi(q_1)=z=\pi(q_2)$. It implies that $q_1=q_2$ and so $(x_1,x_2)\in \Prox(f)$ which ends the proof.
\end{proof}

Recall that $f\colon X \ra X$ is \emph{almost 1-1 extension} of $g\colon Y\ra Y$ if there is a factor map $\pi \colon X \ra Y$
such that the set $Y'=\bigcap_{n=1}^\infty \set{y \; : \; \diam [\pi^{-1}(y)]<1/n}$ is residual in $Y$.

\begin{cor}\label{cor:syprox}
If $f$ is an almost 1-1 extension of a distal and minimal system then $\Prox(f)=\SProx(f)$.
\end{cor}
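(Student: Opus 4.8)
The plan is to reduce this statement immediately to Corollary~\ref{prop:syprox}. By the definition of almost 1-1 extension there is a factor map $\pi\colon (X,f)\to (Y,g)$ onto the given distal minimal system $(Y,g)$ such that the set $Y'=\bigcap_{n=1}^\infty \set{y : \diam[\pi^{-1}(y)]<1/n}$ is residual in $Y$. First I would observe that $Y'$ is exactly the set of points $y\in Y$ whose fiber $\pi^{-1}(y)$ is a singleton: the fiber is nonempty because $\pi$ is surjective, and it has diameter zero precisely when it is a single point, so $\diam[\pi^{-1}(y)]<1/n$ for all $n$ is equivalent to $\pi^{-1}(y)$ being a singleton.

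Next, since $Y$ is a nonempty compact metric space it is a Baire space, hence the residual set $Y'$ is dense in $Y$ and in particular nonempty. Pick any $z\in Y'$. Then $\pi^{-1}(z)$ is a singleton, $g$ is distal and minimal, and $\pi\colon (X,f)\to (Y,g)$ is a factor map, so all the hypotheses of Corollary~\ref{prop:syprox} are satisfied for this choice of $z$. Applying that corollary yields $\Prox(f)=\SProx(f)$, which is the desired conclusion.

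I do not expect any real obstacle here: beyond invoking Corollary~\ref{prop:syprox}, the only thing to verify is that a residual subset of a compact metric space is nonempty, which is just the Baire category theorem, and the observation identifying $Y'$ with the set of singleton fibers. One could optionally remark that density of $Y'$ shows there are in fact many admissible $z$, but a single one suffices for the argument.
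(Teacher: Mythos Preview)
Your proposal is correct and matches the paper's intent: the corollary is stated without proof in the paper, meaning it is meant to follow immediately from Corollary~\ref{prop:syprox} via exactly the observation you make, namely that the residual set $Y'$ of singleton fibers is nonempty by Baire category, providing the required point $z$.
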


While attempting to transfer a scrambled set to a factor system, we may ask if does it help to assume that our factor map is finite-to-one or countable-to-one? The answer is in the affirmative, however some additional assumptions are needed. This is what we discuss next.

\begin{prop}\label{t-to-one}
Let $(X,f)$, $(Y,g)$ be dynamical systems, let $t\in \mathbb N$, and let $\pi\colon (X,f)\to (Y,g)$ be a factor map such that $\#\pi^{-1}(y)\le t$ for every $y\in Y$. Let $S\subset X$ be an uncountable syndetically scrambled set for $f$ and suppose that for any collection $x_1,\ldots ,x_{t+1}$ of $t+1$ distinct points in $S$, there are $\eps>0$ and an infinite set $M\subset \mathbb N$ such that $d(f^n(x_i),f^n(x_j))\ge \eps$ for every $n\in M$ and every $1\le i<j\le t+1$. Then, $\pi(S)$ contains an uncountable syndetically scrambled set for $g$.
If $S$ is a Cantor set, then $\pi(S)$ contains a Cantor syndetically scrambled set for $g$.
\end{prop}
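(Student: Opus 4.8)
The plan is to pass to a subset $T\subseteq S$ on which $\pi$ is injective and whose image remains pairwise non-asymptotic under $g$, using the hypothesis to bound how much $\pi$ can destroy non-asymptoticity. Throughout I write $d,d'$ for admissible metrics on $X,Y$, and I use the standard facts that $\pi$ is uniformly continuous, that $g^n\circ\pi=\pi\circ f^n$, and hence that $\pi\times\pi$ carries $\SProx(f)$ into $\SProx(g)$ (a superset of a syndetic set is syndetic).

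The quantitative heart of the argument is the following compactness claim: for every $\eps>0$ there is $\delta(\eps)>0$ such that any $t+1$ distinct points $z_1,\dots,z_{t+1}\in X$ with $d(z_i,z_j)\ge\eps$ for all $i\ne j$ admit indices $i\ne j$ with $d'(\pi(z_i),\pi(z_j))\ge\delta(\eps)$. Indeed, the set $\set{(z_1,\dots,z_{t+1})\in X^{t+1}:d(z_i,z_j)\ge\eps \text{ for } i\ne j}$ is compact, the function $(z_1,\dots,z_{t+1})\mapsto\max_{i<j}d'(\pi(z_i),\pi(z_j))$ is continuous on it, and it is strictly positive there because $t+1$ distinct points cannot all lie in a single fibre of $\pi$; its minimum is the desired $\delta(\eps)$.

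Next, on $S$ consider the relation $x\approx y$ defined by $(\pi(x),\pi(y))\in\Asy(g)$. Since $\Asy(g)$ is an equivalence relation on $Y$, so is $\approx$ on $S$, and $\pi(x)=\pi(y)$ implies $x\approx y$; thus on any $\approx$-transversal $\pi$ is injective and all pairs of distinct points have non-asymptotic $g$-images. I claim every $\approx$-class has at most $t$ points. Otherwise there would be distinct $x,y_1,\dots,y_t\in S$ with all pairs among $\pi(x),\pi(y_1),\dots,\pi(y_t)$ in $\Asy(g)$ (using transitivity of $\Asy(g)$). Applying the hypothesis to the $t+1$ distinct points $x,y_1,\dots,y_t$ yields $\eps>0$ and an infinite $M\subseteq\N$ with pairwise $\eps$-separation of the $f^n$-images for $n\in M$; by the claim, for each $n\in M$ some pair has $d'$-distance at least $\delta(\eps)$ between its $g^n$-images, and as there are only finitely many pairs, pigeonhole gives a single pair that is $\delta(\eps)$-separated along an infinite subset of $M$, contradicting that all these pairs lie in $\Asy(g)$.

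Finally I treat the two cases. If $S$ is merely uncountable, then since each $\approx$-class is finite there are uncountably many classes; choosing one point from each yields $T\subseteq S$ with $\pi|_T$ injective and, for distinct $x,y\in T$, $(\pi(x),\pi(y))\in\SProx(g)\setminus\Asy(g)$, so $\pi(T)\subseteq\pi(S)$ is an uncountable syndetically scrambled set for $g$. If $S$ is a Cantor set, then $\approx$ restricted to $S$ is Borel (as $\Asy(g)$ is Borel in $Y^2$ and $\pi\times\pi$ is continuous) with all vertical sections finite, hence of first category in $S^2$ by the Kuratowski--Ulam theorem; thus $S^2\setminus{\approx}$ is residual in $S^2$, and Theorem~\ref{thm:RelMycielski} applied with $X:=S$, $R_2:=S^2\setminus{\approx}$ and $R_n:=S^n$ for $n\ne2$ produces a dense Mycielski set $D\subseteq S$ with $(\pi(x),\pi(y))\notin\Asy(g)$ for all distinct $x,y\in D$. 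Picking any Cantor set $C\subseteq D$, the map $\pi|_C$ is injective, so $\pi(C)\subseteq\pi(S)$ is a Cantor set and a syndetically scrambled set for $g$. The main obstacle is the compactness claim and its combination with the hypothesis --- the uniform bound $\delta(\eps)$ together with the pigeonhole over finitely many pairs; the transversal selection and the category bookkeeping are routine.
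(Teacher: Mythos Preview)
Your proof is correct, and it diverges from the paper's argument in two places.

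First, to bound the asymptotic classes by $t$, the paper works directly in $\pi(S)$: given $y_1,\dots,y_{t+1}\in\pi(S)$ pairwise asymptotic, it picks preimages $x_i\in S$, applies the hypothesis to get $\eps$ and $M$, and then passes to a subsequence $(n_k)\subset M$ along which all $g^{n_k}(y_i)$ converge to a common $b\in Y$ and all $f^{n_k}(x_i)$ converge to points $a_i$; the $a_i$ are pairwise $\eps$-apart yet all lie in $\pi^{-1}(b)$, contradicting the fibre bound. Your route instead extracts a uniform $\delta(\eps)$ in advance via the compactness of the $\eps$-separated $(t{+}1)$-tuples, and then uses pigeonhole over the finitely many pairs. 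Both arguments are short; yours is slightly more quantitative, while the paper's avoids the auxiliary lemma altogether.

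Second, for the Cantor case the paper invokes a descriptive-set-theoretic selection theorem (Srivastava, Theorem~5.13.4): $\Asy(g)$ is a Borel equivalence relation on $\pi(S)$ with finite classes, hence admits a Cantor transversal. You instead observe that the pullback relation $\approx$ on $S$ has finite (hence nowhere-dense, since $S$ is perfect) vertical sections, apply Kuratowski--Ulam to conclude it is of first category, and then feed the residual complement into the Kuratowski--Mycielski theorem already stated in the paper. This keeps the argument within the paper's own toolbox and sidesteps the need for the external selection theorem; the price is the extra Kuratowski--Ulam step.
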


\begin{proof}
Clearly any two elements of $\pi(S)$ are syndetically proximal for $g$. Therefore, we only have to take care of the asymptotic relation. Keep in mind that the asymptotic relation is an equivalence relation.

First we clam that if $y_1,\ldots,y_{t+1}\in \pi(S)$ are such that $(y_i,y_j)\in \Asy(g)$ for every $i,j$, then there exist $i\ne j$ such that $y_i=y_j$.

Suppose that the claim is false, and let $x_i\in S\cap \pi^{-1}(y_i)$. Then $x_i$'s are distinct. Choose $\eps>0$ and an infinite set $M\subset \mathbb N$ as per the hypothesis for $x_1,\ldots,x_{t+1}$. By compactness, we can find a sequence $(n_k)$ in $M$ and $b\in Y$ such that $(g^{n_k}(y_i))\to b$ for $1\le i\le t+1$. By passing onto a subsequence, we can find points $a_i\in X$ such that $(f^{n_k}(x_i))\to a_i$ for $1\le i\le t+1$. Clearly $d(a_i,a_j)\ge \eps$ for $i\ne j$, and thus $a_i$'s are distinct. Since we must also have $\pi(a_i)=b$, we obtain $\#\pi^{-1}(b)\ge t+1$, a contradiction. This ends the proof of the claim.

Now, by the above claim, for each $y\in \pi(S)$, the intersection of $\pi(S)$ with the asymptotic cell of $y$ can contain at most $t$ elements. Taking exactly one point from each asymptotic equivalence class that intersects $\pi(S)$, we get the required set.

Finally, let us consider the case when $S$ is a Cantor set. Then $\pi(S)$ is also a Cantor set and by previous arguments, equivalence classes of $\Asy(g)$ over $\pi(S)\times \pi(S)$ are finite. Note that $\Asy(g)$ is a Borel set, since it is $F_{\sigma\delta}$. Thus $\Asy(g)$ is a Borel equivalence relation on $\pi(S)\times \pi(S)$ with uncountably many equivalence classes, and thus by \cite[Theorem 5.13.4]{Sriv} we can find a Cantor set $D\subset \pi(S)$ such that if $x,y\in D$ and $(x,y)\in \Asy(g)$ then $x=y$. The proof is finished.
\end{proof}

Next we consider the case of countable-to-one factor maps, that is factor maps $\pi\colon (X,f)\to (Y,g)$ such that $\pi^{-1}(\set{y})$ is at most countable set for every $y\in Y$.

\begin{prop}\label{countable-to-one}
Let $(X,f)$, $(Y,g)$ be dynamical systems, let $\pi\colon (X,f)\to (Y,g)$ be a countable-to-one factor map, and assume that $z_1,z_2\in X$ are so that $\pi(z_1)\ne \pi(z_2)$. Let $\eps=d(\pi(z_1),\pi(z_2))/3$, and let $\delta>0$ be such that $d(a,b)<\delta$ implies $d(\pi(a),\pi(b))<\eps$ for $a,b\in X$. Suppose that  $S\subset X$ is a dense Mycielski  syndetically scrambled set for $f$ with the following property: for any two distinct points $x_1,x_2\in S$, there is an infinite set $M\subset \mathbb N$ such that $d(z_i,f^n(x_i))<\delta$ for $i=1,2$ and every $n\in M$. Then $g$ has a dense Mycielski syndetically $\eps$-scrambled set $T\subset \pi(S)$. Moreover, if $S$ is $f^k$-invariant for some $k\in \mathbb N$, then $T$ may be chosen to be $g^k$-invariant.
\end{prop}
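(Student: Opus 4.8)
The plan is to mimic the strategy of Proposition~\ref{t-to-one}, but now the asymptotic cells we need to kill inside $\pi(S)$ are merely countable rather than finite, so instead of extracting a transversal by a purely combinatorial argument we invoke a Borel selection/transversal theorem. First I would observe that $\pi(S)$ is a dense Mycielski set (being the continuous image of the Mycielski set $S$ under a countable-to-one map, each Cantor piece maps onto a Cantor set or onto a countable set; combining and discarding degenerate pieces we still get a dense Mycielski set), and that $\pi(S)^2\subset\SProx(g)$ since $\pi$ is a semiconjugacy and $S^2\subset\SProx(f)$. Thus as in the earlier propositions the only real work is to remove asymptotic pairs while keeping the set dense Mycielski (and $g^k$-invariant in the invariant case).

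The key step is the \textbf{separation claim}: if $x_1,x_2\in S$ are distinct then $(\pi(x_1),\pi(x_2))\notin\Asy(g)$, and in fact $\limsup_{n\to\infty}d(g^n(\pi(x_1)),g^n(\pi(x_2)))\ge\eps$. Indeed, by hypothesis there is an infinite $M\subset\mathbb N$ with $d(z_i,f^n(x_i))<\delta$ for $i=1,2$ and all $n\in M$; then for $n\in M$ we get $d(\pi(z_i),g^n(\pi(x_i)))<\eps$ for $i=1,2$, and since $d(\pi(z_1),\pi(z_2))=3\eps$ the triangle inequality forces $d(g^n(\pi(x_1)),g^n(\pi(x_2)))>\eps$. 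In particular no two distinct points of $\pi(S)$ are both identified by $\pi$ over the same fibre \emph{and} asymptotic — more precisely, for a fixed $y\in\pi(S)$, the set $\pi(S)\cap\Asy(g)(y)$ is contained in $\pi\big(S\cap\pi^{-1}(\pi^{-1}\{y\}\text{'s worth of fibres})\big)$; but a cleaner formulation is: if $y_1,y_2\in\pi(S)$ with $(y_1,y_2)\in\Asy(g)$ and $y_1\ne y_2$, pick $x_i\in S\cap\pi^{-1}(y_i)$, which are distinct, and the separation claim gives a contradiction. Hence $\Asy(g)$ restricted to $\pi(S)$ is the trivial relation on the non-asymptotic part — wait, that is too strong; the correct statement is that $\pi$ already separates $\pi(S)$ from asymptoticity, so actually \emph{every} pair of distinct points of $\pi(S)$ is syndetically $\eps$-scrambled, and no transversal surgery is needed at all for the basic conclusion.

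So the refined plan is: (1) verify $\pi(S)$ is dense Mycielski; (2) verify $\pi(S)^2\subset\SProx(g)$; (3) prove the separation claim above, which immediately yields that $\pi(S)$ itself (or a dense Mycielski subset of it, if one wants to be careful about $\pi(S)$ genuinely being Mycielski rather than just c-dense) is syndetically $\eps$-scrambled for $g$; and (4) for the invariant case, if $S$ is $f^k$-invariant then $\pi(S)$ is $g^k$-invariant because $g^k\pi=\pi f^k$, so we may take $T=\pi(S)$ directly, or if we only obtained a dense Mycielski $T_0\subset\pi(S)$ in step (1), replace $T_0$ by $\bigcup_{j\ge 0}g^{kj}(T_0)$, which is still dense, still Mycielski (each $g^{kj}$ is injective on the relevant Cantor pieces because they sit inside a scrambled set), still $\subset\pi(S)$ by invariance, and still syndetically $\eps$-scrambled by step (3) applied to its points together with the argument of Lemma~\ref{InvMyc}. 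I would then note that, as remarked before Lemma~\ref{cdense}, the same proof works with `scrambled' in place of `syndetically scrambled'.

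The main obstacle I anticipate is purely bookkeeping around step (1): showing that $\pi(S)$ (or a suitable subset) is genuinely a \emph{dense Mycielski} set and not merely c-dense, given that the image of a Cantor set under a countable-to-one continuous map need not be a Cantor set. The clean fix is to pass, inside each Cantor piece $S_j$ of $S$, to a Cantor subset on which $\pi$ is injective — possible by \cite[Remark~4.3.6]{Sriv} exactly as in the proof of Lemma~\ref{cdense}, since the fibres of $\pi|_{S_j}$ are countable — so that $\pi$ maps it homeomorphically onto a Cantor set; collecting these gives a dense Mycielski subset of $\pi(S)$, and then steps (2)–(4) go through verbatim. The invariance bookkeeping in step (4) is the only other place that needs care, and it is handled by the same union-over-powers trick used in Lemma~\ref{InvMyc}.
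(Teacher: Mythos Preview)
Your core argument is correct and matches the paper's: the separation claim shows that every pair of distinct points of $\pi(S)$ is syndetically $\eps$-scrambled (so no Borel transversal is needed at all), and the invariance step via $\bigcup_{n\ge 0}g^{kn}(T_0)\subset\pi(S)$ is exactly what the paper does. The only genuine problem is in step~(1). First, the parenthetical remark that each Cantor piece $S_j$ maps under a countable-to-one map to ``a Cantor set or a countable set'' is not right: $\pi(S_j)$ is compact and in fact perfect (an isolated $y$ would make $\pi^{-1}(y)\cap S_j$ nonempty clopen in the Cantor set $S_j$, hence uncountable, contradicting countable fibres), but it need not be totally disconnected---the standard at-most-$2$-to-$1$ map $\{0,1\}^{\mathbb N}\to[0,1]$, $x\mapsto\sum x_i2^{-i}$, shows the image can be an interval. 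Second, and more importantly, your ``clean fix'' of selecting Cantor subsets $S_j'\subset S_j$ on which $\pi$ is injective does give Cantor sets $\pi(S_j')$, but nothing in the construction forces $\bigcup_j\pi(S_j')$ to be dense in $Y$: the subsets $S_j'$ are chosen independently of any basis, and may all land in a small region.

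The paper carries out precisely the c-dense route you mention in passing: fix a countable base $\{W_i\}$ of $Y$; for each $i$ use that $S$ is dense Mycielski in $X$ to pick a Cantor set $K_i\subset S\cap\pi^{-1}(W_i)$; then $\pi(K_i)\subset\pi(S)\cap W_i$ is compact and uncountable (since $\pi$ is countable-to-one), hence contains a Cantor set $C_i$; and $T=\bigcup_i C_i$ is dense Mycielski in $Y$ by construction. Indexing by a basis of $Y$ is exactly what guarantees density and is what your fix is missing. After that, the $g^k$-invariant version is obtained as you outline, using that $g^{kn}$ is injective on $\pi(S)$ because $\pi(S)$ is scrambled.
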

\begin{proof}
First we verify that $\pi(S)$ is syndetically $\eps$-scrambled for $g$. Clearly $\pi(S)^2\subset \SProx(g)$. Now consider two distinct points $y_1,y_2\in \pi(S)$, and let $x_1,x_2\in S$ be so that $\pi(x_i)=y_i$ for $i=1,2$. By assumption, there is an infinite set $M$ such that $d(z_i,f^n(x_i))<\delta$ for $i=1,2$ and all $n\in M$. Then $d(\pi(z_i),g^n(y_i))=d(\pi(z_i),\pi(f^n(x_i)))<\eps$ for $i=1,2$ and hence $d(g^n(y_1),g^n(y_2))>\eps$ for every $n\in M$ by triangle inequality and the choice of $\eps$.

This shows that $\limsup_{n\to \infty}d(g^n(y_1),g^n(y_2))\ge \eps$, and thus $\pi(S)$ is syndetically $\eps$-scrambled for $g$.

Let $\{W_i:i\in \mathbb N\}$ be a countable base of the topology of $Y$ consisting of nonempty open sets. Fix $i$ and consider $W_i$. Since $S$ is a dense Mycielski set in $X$, we have that $S\cap \pi^{-1}(W_i)$ contains a Cantor set, say $K_i$. Since $\pi$ is countable-to-one, $\pi(K_i)\subset \pi(S)\cap W_i$ is an uncountable perfect set and hence there is a Cantor set $C_i\subset \pi(K_i) \subset \pi(S)\cap W_i$. Put $T=\bigcup_{i=1}^\infty C_i$. Then $T$ is Mycielski, $T$ is dense in $Y$ since $T\cap W_i\ne \emptyset$ for every $i\in \mathbb N$, and $T$ is syndetically $\eps$-scrambled for $g$ since $T\subset \pi(S)$.

Next, assume in addition that $S$ is $f^k$-invariant. Then $\pi(S)$ is $g^k$-invariant. Let $T=\bigcup_{i=1}^\infty C_i$ be as constructed above and let $T'=\bigcup_{n=0}^\infty \bigcup_{i=1}^\infty g^{kn}(C_i)$. Clearly $T'$ is $g^k$-invariant, and $T'$ is syndetically $\eps$-scrambled for $g$ since $T'\subset \pi(S)$. Note that $g^{kn}$ is injective on $\pi(S)$ for any $n\in \mathbb N$ since $\pi(S)$ is syndetically scrambled for $g^{kn}$ also. In particular, $g^{kn}$ is injective on $C_i$ and hence $g^{kn}(C_i)$ is a Cantor set. But $T\subset T'$, therefore $T'$ is a dense Mycielski set.
\end{proof}

\begin{rem}
In the above hypothesis, the points $z_1,z_2$ are independent of $x_1,x_2$. But it does not matter if they depend on $x_1,x_2$ also, as long as there is a uniform $\eps$ for any pair. That is, the hypothesis could also be: ``let $\eps>0$, $\delta>0$ be such that $d(a,b)<\delta$ implies $d(\pi(a),\pi(b))<\eps$ for $a,b\in X$; and suppose that  $S\subset X$ is a dense Mycielski  syndetically scrambled set for $f$ with the following property: for any two distinct points $x_1,x_2\in S$, there are $z_1,z_2\in X$ and an infinite set $M\subset \mathbb N$ such that $d(\pi(z_1),\pi(z_2))\ge 3\eps$ and $d(z_i,f^n(x_i))<\delta$ for $i=1,2$ and every $n\in M$".
\end{rem}

As we have seen, if $\pi\colon (X,f)\to (Y,g)$ is finite-to-one or countable-to-one, syndetically scrambled set can be transferred from $X$ to $Y$. Unfortunately, this does not work in the opposite direction.

\begin{exmp}
There is a subshift $X\subset \Sigma_2^+ $ and factor map $\pi\colon (X,\sigma)\to (Y,g)$, such that $\pi$ is one-to-one for all but one point and the class $\SProx(\sigma|_X)(x)$ is at most countable for any $x\in X$ (in particular all syndetically scrambled sets are at most countable) while $Y$ contains a Cantor syndetically scrambled set for $\sigma$.
\end{exmp}
\begin{proof}
Let $D\subset \Sigma_2^+$ be a Cantor scrambled set for $\sigma$ and for any $x\in D$ define the point $z^x=w(0)w(1)\ldots$ where
$w(n)=0^{n+1}$ if $x_n=0$ and $1^{n+1}$ if $x_n=1$. Let $X$ be the smallest (in the sense of inclusion) subshift containing all the points $z^x$, $x\in  D$.
Note that if $x,y$ are distinct then for every $n,m\geq 0$ there exists a thick set $I$ such that $z^x_i\neq z^y_i$ for every $i\in I$.
Additionally, $X$ consists exactly of orbits of points $z^x$ together with their limit points, that is sequences $1^k 0^\infty$, $0^k 1^\infty$ where $k\geq 0$.
Then syndetically proximal class of any $z\in X$ is at most countable. Additionally, for every $\eps>0$ the set
$$
\set{n: d(\sigma(z^x),0^\infty)<\eps}\cup \set{n: d(\sigma(z^x),1^\infty)<\eps}
$$
is syndetic. Therefore, if $Y$ is obtained from $X$ by identifying points $0^\infty$ and $1^\infty$, say by a factor map $\pi$, then the set $S=\set{\pi(z^x): x\in D}$ becomes a Cantor syndetically scrambled set, which ends the proof.
\end{proof}

However, we wish to note one special situation where we can transfer a syndetically scrambled set upwards.
This can be successfully applied (with some extra work) in some situations arising from applications (e.g. see \cite{WZ}).

\begin{prop}
Let $\pi:(X,f)\to (Y,g)$ be a factor map between dynamical systems and let $T\subset Y$ be a syndetically scrambled set containing a fixed point $b$ for $g$. If $\pi^{-1}(b)$ is a singleton, then $\pi^{-1}(T)$ is a syndetically scrambled set for $f$. Moreover, if $T$ is $\eps$-scrambled and if $\delta>0$ is chosen for this $\eps$ using the uniform continuity of $\pi$, then $\pi^{-1}(T)$ is $\delta$-scrambled.
\end{prop}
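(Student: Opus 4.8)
The plan is to exploit the two facts that (a) $\pi$, being a continuous surjection between compact metric spaces, is uniformly continuous, and (b) $\pi^{-1}(b)$ is a single point, call it $a$, which must then be a fixed point of $f$ since $\pi(f(a))=g(\pi(a))=g(b)=b$ forces $f(a)\in\pi^{-1}(b)=\{a\}$. First I would observe that $R_\pi$ itself is closed (preimage of the diagonal), and more importantly that the preimage of a single point $\pi^{-1}(y)$ shrinks to small diameter as $y$ approaches $b$: given $\delta>0$, by continuity of $\pi$ and compactness there is an open neighborhood $U$ of $b$ in $Y$ such that $\diam[\pi^{-1}(U)]<\delta$ — here one uses that $\pi^{-1}(b)$ is the single point $a$, so for any open $W\ni a$ the set $Y\setminus\pi(X\setminus W)$ is an open neighborhood of $b$ whose $\pi$-preimage lies in $W$; a standard finite-intersection argument then produces the claimed $U$.

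The second step handles syndetic proximality. Take distinct $p,q\in\pi^{-1}(T)$. Then $\pi(p),\pi(q)\in T$, and since $T$ is a syndetically scrambled set and $b$ is a fixed point, I would consider two cases. If $\pi(p)\ne\pi(q)$: then $(\pi(p),\pi(q))\in\SProx(g)$, so for every $\eps'>0$ the set $\{n:d(g^n(\pi p),g^n(\pi q))<\eps'\}$ is syndetic; applying the shrinking property above with the $\eps$-ball around $b$ one finds, taking $\eps'$ small, that whenever $g^n(\pi p)$ and $g^n(\pi q)$ are both close to $b$ (which happens along a syndetic set, since $(\pi p,\pi q)\in\SProx$ and both are syndetically proximal to $b$ — actually one should argue that $d(g^n(\pi p),g^n(\pi q))$ small together with $T$ being scrambled with $b$ gives closeness to $b$ syndetically; alternatively use that in a scrambled set all pairs are proximal to each other but the simplest route is: pick the syndetic set where $d(g^n(\pi p),b)<\eps/2$, which exists since $(\pi p,b)\in\SProx(g)$, intersect with the analogous set for $\pi q$ — the intersection of two syndetic sets need not be syndetic, so instead use thickly-syndetic/piecewise considerations, or better, directly use that $(\pi p,\pi q)\in\SProx(g)$ to get a syndetic set $A$ with $d(g^n(\pi p),g^n(\pi q))$ tiny, combined with one syndetic set where $g^n(\pi p)$ is near $b$). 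On that set $f^n(p)$ and $f^n(q)$ both lie in $\pi^{-1}(\text{small ball around }b)$, which has diameter $<\delta$, so $d(f^n(p),f^n(q))<\delta$, giving $(p,q)\in\SProx(f)$. If $\pi(p)=\pi(q)$ with $p\ne q$: then this pair is ruled out only if $\pi$ is injective on $\pi^{-1}(T)$, which it need not be; so here one must note that $\pi(p)=\pi(q)\in T\setminus\{b\}$ is possible, and for such a pair one needs $(p,q)\in\SProx(f)\setminus\Asy(f)$ directly — this is exactly the obstacle, and I expect it must be excluded or the statement interpreted so that the fibers over $T\setminus\{b\}$ are handled by a separate argument, perhaps the pair being automatically proximal-but-not-asymptotic fails in general, so most likely the intended reading uses only the first case and the fiber issue does not arise because the statement is about transferring the scrambled property pairwise and two points in the same fiber over $T$ form a pair that is proximal (by nothing) — this is the main gap to sort out.

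The third step handles non-asymptoticity and the $\delta$-scrambled conclusion. Suppose $T$ is $\eps$-scrambled, and let $\delta>0$ be chosen by uniform continuity of $\pi$ so that $d(\pi(u),\pi(v))\ge\eps\Rightarrow d(u,v)\ge\delta$ (equivalently $d(u,v)<\delta\Rightarrow d(\pi u,\pi v)<\eps$). For distinct $p,q\in\pi^{-1}(T)$ with $\pi(p)\ne\pi(q)$, since $(\pi p,\pi q)$ is $\eps$-scrambled for $g$ we have $\limsup_n d(g^n(\pi p),g^n(\pi q))\ge\eps$; along the subsequence realizing this, $d(\pi(f^n p),\pi(f^n q))=d(g^n\pi p,g^n\pi q)\ge\eps'$ for some $\eps'$ slightly below $\eps$ infinitely often, hence by the choice of $\delta$ we get $d(f^n p,f^n q)\ge\delta$ infinitely often, so $\limsup_n d(f^n p,f^n q)\ge\delta$ and in particular $(p,q)\notin\Asy(f)$. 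Combined with step two, $(p,q)$ is a syndetically scrambled pair for $f$, and $\pi^{-1}(T)$ is $\delta$-scrambled. I expect the only genuinely delicate point to be the syndeticity bookkeeping in step two — ensuring that the set of times at which both $f^n(p)$ and $f^n(q)$ land in the small neighborhood of $a$ is syndetic — which I would resolve by first showing $(\pi p,\pi q)$ and $(\pi p,b)$ are syndetically proximal and then arguing that the relevant time set, defined via a single $\eps$-condition on $d(g^n(\pi p),g^n(\pi q))$ and the neighborhood structure near the fixed point $b$, is syndetic, using that near a fixed point the two orbits being close to each other and the structure of $\SProx$ pin them near $b$.
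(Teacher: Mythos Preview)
Your identification of the same-fiber obstacle is exactly right, and the paper's own proof glosses over this case: it simply asserts that for distinct $x_1, x_2 \in \pi^{-1}(T)$ one has $(\pi(x_1), \pi(x_2)) \notin \Asy(g)$, which is vacuously false when $\pi(x_1) = \pi(x_2)$. As stated the proposition is not quite true. Take $Y = \Sigma^+_2$, $b = 0^\infty$, any syndetically scrambled $T \ni b$, $X = \Sigma^+_3$, and $\pi$ the $1$-block factor map sending $2 \mapsto 1$. Then $\pi^{-1}(b) = \{0^\infty\}$ is a singleton, but any $y \in T \setminus \{b\}$ has a symbol $1$ at some coordinate $k$, and replacing that coordinate by $2$ yields a distinct point of $\pi^{-1}(y) \subset \pi^{-1}(T)$ that is asymptotic to $y$. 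So your instinct that an extra hypothesis (injectivity of $\pi$ over $T\setminus\{b\}$, or the weaker conclusion that $\pi^{-1}(T)$ merely \emph{contains} a syndetically scrambled set) is needed was correct.

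For the syndetic-proximality step your plan is more tangled than necessary. You never need to intersect two syndetic time-sets or worry about thick/piecewise combinatorics at the level of the pair $(p,q)$: it suffices to show $(p, a) \in \SProx(f)$ for each $p \in \pi^{-1}(T)$ individually and then invoke that $\SProx(f)$ is an equivalence relation. Your fiber-shrinking observation already delivers this cleanly --- given $\delta > 0$ choose a neighborhood $U \ni b$ with $\pi^{-1}(U) \subset B(a,\delta)$; since $(\pi(p), b) \in \SProx(g)$ and $b$ is fixed, the set $\{n : g^n(\pi(p)) \in U\}$ is syndetic, and on that set $f^n(p) \in B(a,\delta)$. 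The paper reaches the same conclusion $(p,a) \in \SProx(f)$ by a different route: any minimal point $c \in \overline{\orbp(p,f)}$ projects under $\pi$ to a minimal point of $\overline{\orbp(\pi(p),g)}$, and since $\pi(p)$ lies in a syndetically scrambled set containing the fixed point $b$, that orbit closure has $b$ as its \emph{unique} minimal point; hence $\pi(c)=b$, so $c=a$, and then $(p,a)\in\SProx(f)$ follows. Both routes are valid and both handle the same-fiber and different-fiber cases uniformly for the proximality half --- the genuine gap, in your proposal and in the paper, lies solely in the non-asymptoticity half.
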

\begin{proof}
Let $S=\pi^{-1}(T)$ and $a\in X$ be the unique point with $\pi(a)=b$. Clearly, $f(a)=a$. Let $x\in S$ and consider a minimal point $c\in \overline{\orbp(x,f)}$. Then $\pi(c)$ is a minimal point in $\overline{\orbp(\pi(x),g)}$. But $\pi(x)\in T$ and therefore $b$ is the unique minimal point in $\overline{\orbp(\pi(x),g)}$ (see Proposition \ref{uniMin} later), and hence $\pi(c)=b$, or $c=a$. This shows that $a$ is the unique minimal point in $\overline{\orbp(x,f)}$ for any $x\in S$. Consequently $(x,a)\in SyProx(f)$ for every $x\in S$, and so $S^2\subset SyProx(f)$. If $x_1,x_2\in S$ are distinct, then $(\pi(x_1),\pi(x_2))\notin Asy(g)$ and therefore $(x_1,x_2)\notin Asy(f)$. Thus $S$ is syndetically scrambled for $f$. The last assertion is easy to verify.
\end{proof}

\section{Transitive subshifts}\label{sec:transSD}

\subsection{The full shift}

We start our analysis of symbolic systems with the simplest case of the full shift over a finite alphabet.
In this case our conclusions will be the strongest possible. Since in many cases the full shift can be embedded into
dynamics, results of this section can be of particular interest.

\begin{thm}\label{thm:full_shift}
If $m>1$ then both $(\Sigma_m^+,\sigma)$ and $(\Sigma_m,\sigma)$ have dense Mycielski $\sigma$-invariant syndetically scrambled sets.
\end{thm}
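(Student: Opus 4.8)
The plan is to first build a single Cantor syndetically scrambled set inside the full shift by an explicit coding construction, and then upgrade it to a dense Mycielski $\sigma$-invariant one using the tools of Section~\ref{sec:tools}, in particular Lemma~\ref{InvMyc}. It suffices to treat the one-sided case $(\Sigma_m^+,\sigma)$: for the two-sided case one takes the Cantor set produced for the one-sided shift, embeds it by choosing some fixed left-infinite tail (or works directly with a two-sided coding that is asymptotic under $\sigma^{-1}$ to a fixed point, so that the earlier two-sided Proposition applies), and then again invokes an invariance upgrade. Since a scrambled set in a subshift is automatically $1$-scrambled (Remark), we need not track the $\eps$.

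First I would construct a Cantor syndetically scrambled set $K\subset \Sigma_m^+$. Work over the two-letter subalphabet $\{0,1\}$. To each $\xi\in\{0,1\}^{\N_0}$ associate a point $z^\xi\in\{0,1\}^{\N_0}$ built in consecutive blocks $B_0 B_1 B_2\cdots$, where on block $k$ we put either the constant string $0^{L_k}$ or $1^{L_k}$ of some rapidly increasing length $L_k$, the choice of symbol being dictated by $\xi$ in such a way that (a) for any two distinct $\xi,\eta$ the set of coordinates where $z^\xi$ and $z^\eta$ agree is thick, and (b) for every $\xi$ and every $\eps>0$ the set of $n$ with $d(\sigma^n(z^\xi),0^\infty)<\eps$ together with the set of $n$ with $d(\sigma^n(z^\xi),1^\infty)<\eps$ is syndetic; this is exactly the mechanism used in the Example above. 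Property (b) forces every $z^\xi$ to be syndetically proximal to the constant points, hence all the $z^\xi$ syndetically proximal to each other; property (a) together with the fact that distinct $\xi$ give coordinates where the two points differ infinitely often gives non-asymptoticity. Continuity and injectivity of $\xi\mapsto z^\xi$ make $K=\{z^\xi:\xi\in\{0,1\}^{\N_0}\}$ a Cantor set, and it is syndetically scrambled.

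Next I would pass to invariance and density. Let $S=\bigcup_{k\ge 0}\sigma^k(K)$; this is $\sigma$-invariant and, since $K$ is syndetically scrambled (so all powers of $\sigma$ are injective on it) and any two of its points remain syndetically scrambled under every shift of either factor, $S$ is again syndetically scrambled — the proximality is preserved because syndeticity is unaffected by a time shift, and non-asymptoticity likewise. Now the key point is to verify the hypothesis of Lemma~\ref{InvMyc}: for every nonempty open (cylinder) set $U\subset\Sigma_m^+$ there is $n$ with $\sigma^n(K)\subset\sigma^n(U)$. This holds because the full shift is exact — for any word $w$ there is $n=|w|$ with $\sigma^n(C[w])=\Sigma_m^+\supset\sigma^n(K)$; here we only need $K\subset\Sigma_m^+=\sigma^n(U)$. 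Applying Lemma~\ref{InvMyc} to this $S$ and $K$ yields a dense Mycielski $\sigma$-invariant syndetically scrambled set in $\Sigma_m^+$. For $\Sigma_m$, I would run the analogous coding so that each two-sided point $z^\xi$ is asymptotic to a fixed point under $\sigma^{-1}$ (e.g. make the negative tail eventually constant $0$), obtain a Cantor set $Z$ with $Z^2\subset\SProx(\sigma)\cap\Asy(\sigma^{-1})$, get a Cantor syndetically scrambled set from the corresponding earlier Proposition, and then again apply an invariance-and-density upgrade (the two-sided analogue of Lemma~\ref{InvMyc}, using exactness of the two-sided full shift in the sense that every cylinder maps onto the whole space under a suitable power).

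The main obstacle I expect is designing the block lengths $L_k$ and the rule assigning symbols to blocks so that (a) and (b) hold simultaneously for a full Cantor set of parameters $\xi$: one needs the gaps between "agreement blocks" to stay boundedly sparse (for thickness, hence syndeticity of the relevant hitting sets) while still letting two distinct parameters disagree on infinitely many coordinates, and one must check that the hitting-time sets are genuinely syndetic and not merely infinite — this requires the block lengths to grow but the *transition regions* between blocks to remain of bounded size relative to the block just entered. Once this combinatorial bookkeeping is pinned down, the remaining steps are the routine verifications that $\xi\mapsto z^\xi$ is a homeomorphism onto its image, that $S$ inherits the scrambling, and that Lemma~\ref{InvMyc} applies.
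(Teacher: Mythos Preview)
There are two genuine gaps.

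First, your construction of the seed Cantor set $K$ does not produce syndetic proximality. Property~(b) says only that the \emph{union} of the hitting-time sets to neighborhoods of $0^\infty$ and of $1^\infty$ is syndetic; this does not make $z^\xi$ syndetically proximal to either constant sequence. Indeed $0^\infty$ and $1^\infty$ form a distal pair, so no point can lie in both of their syndetically proximal cells, and transitivity of $\SProx$ then blocks your conclusion ``hence all the $z^\xi$ are syndetically proximal to each other''. The Example you cite is in fact a counterexample to your plan rather than a template: there the syndetically proximal cell of every point of $X$ is at most countable, so no uncountable syndetically scrambled set exists in $X$. Concretely, with all-$0$ versus all-$1$ blocks of lengths $L_k\to\infty$, whenever $\xi_k\ne\eta_k$ the sequences $z^\xi,z^\eta$ disagree on an interval of length $L_k$; the disagreement set is thick, so the agreement set is not syndetic, and $(z^\xi,z^\eta)\notin\SProx(\sigma)$. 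The paper's fix is to use a single accumulation target: encode $x\in\Sigma_2^+$ by placing $x_k$ at position $2^k$ and $0$ elsewhere. Then every coded point is syndetically proximal to $0^\infty$ (for each window length $L$ the ``bad'' positions near a power of $2$ occur with uniformly bounded gaps), and $\SProx$ being an equivalence relation gives pairwise syndetic proximality; non-asymptoticity is inherited from the original scrambled set $C$.

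Second, the two-sided upgrade cannot go through Lemma~\ref{InvMyc}. That lemma requires, for every nonempty open $U$, some $n$ with $f^n(K)\subset f^n(U)$; for a homeomorphism this is equivalent to $K\subset U$ for all $U$, forcing $K$ to be a singleton. The two-sided shift is invertible, and there is no exactness in your sense: $\sigma^n(C[w])$ is just another cylinder, never the whole space. The paper instead builds density and invariance in $\Sigma_m$ by hand: it splits the one-sided Cantor set into disjoint Cantor pieces $C_j$, extends each to bi-infinite sequences with left tail $0^\infty w_j$ (the $w_j$ ranging over all finite words, which forces density), and then takes the full two-sided orbit, with an inductive selection of which $w_j$ are actually used so that different pieces never collide under iteration.
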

\begin{proof}
First consider the case of $\Sigma_m^+$. Let $C\subset \Sigma_2^+$ be a Cantor scrambled set for $\sigma$ and let $g\colon \Sigma_2^+\to \Sigma_2^+\subset \Sigma_m^+$ be defined as $g(x)=y$, where

$y_n=
\begin{cases}
x_k,\text{ if }n=2^k\\
0, \text{ otherwise.}
\end{cases}$

Then $g$ is injective and continuous, and thus $g(C)$ is a Cantor set. Therefore $S:=\bigcup_{n=0}^\infty \sigma^n(g(C))$ is a $\sigma$-invariant Mycielski set in $\Sigma_m^+$. It is not difficult to verify (see proof of Theorem 7 in \cite{tksm}) that $S$ is syndetically scrambled for $\sigma$. Also note that the one-sided full shift is an exact map. So by Lemma \ref{InvMyc}, there is a dense Mycielski $\sigma$-invariant syndetically scrambled set for $\sigma$.

Next consider the case of two sided subshift. Let $g(C)\subset \Sigma_m$ be the Cantor scrambled set constructed above and let $\set{C_j}_{j=0}^\infty\subset g(C)$ be disjoint Cantor sets. Let $\set{w_j}_{j=0}^\infty$ be the sequence of all words that can be constructed over $m$ letters alphabet (i.e. all possible subwords of elements of $\Sigma_m^+$). Now we extend every element $z$ in $C_j$ to a bi-infinite sequence $z'$ by putting $z'_{(-\infty,0)}=0^\infty w_j$. Denote by $C_j'\subset \Sigma_m$ the set of all such extended sequences, where $j=0,1,2,\ldots$.
Let $D_1=C_1'$ and put $n(1)=1$. Assume now that sets $D_1,\ldots, D_k$ as well as numbers $n(1)<n(2)<\ldots <n(k)$ are defined, and let
$n(k+1)$ be the smallest integer (if it exists, or infinity otherwise) such that $w_{n(k+1)}$ does not appear as a subword of any bi-infinite sequence $z\in \bigcup_{i=1}^k D_k$,
or equivalently the trajectory of $z$ never intersects the cylinder set $C[w_{n(k+1)}]$. If $n(k+1)<\infty$ then we put $D_{k+1}=C_{n(k+1)}'$, otherwise we end the construction. That way a finite or infinite sequence of sets $D_j$ is constructed, and let $I$ be the set of such indices, i.e. $j\in I$ iff $D_j$ was constructed. By the construction we can never have $\sigma^n(x)=\sigma^m(y)$ for any $n,m\in \Z$ provided that $x\in D_i$, $y\in D_j$ and $i\neq j$.

Note that the set $S:=\bigcup_{n=-\infty}^\infty \bigcup_{j\in I} \sigma^n(D_j)$ is syndetically scrambled Mycielski set and $\sigma(S)=S$ (exactly the same arguments as for the case of $\Sigma_m^+$ can be applied). But it is also dense, since $S$ intersects every cylinder set in $\Sigma_m$.
\end{proof}

\begin{rem}
In the above construction, the syndetically scrambled set contains the fixed point $0^\infty$ and hence the syndetically scrambled set is disjoint with $\Tran(\sigma)$. We can also obtain syndetically scrambled sets contained in $\Tran(\sigma)$. For this, observe that the hypothesis of the following result is applicable to the full shift. Simply, $\Sigma_m$ (as well as $\Sigma_m^+$) has a natural group structure with respect to coordinatewise addition modulo $m$, where $0^\infty$ is the identity element, and then the full shift is a group automorphism.
\end{rem}

\begin{thm}
Let $(X,*)$ be an infinite compact metrizable group (need not be Abelian) with identity element $e$ and let $f\colon X\to X$ be a continuous homomorphism such that $f$ is transitive and $\overline{\M(f)}=X$. Let $S\subset X$ be a syndetically scrambled set for $f$ with $e\in S$, and let $x\in \Tran(f)$. If $T=S*x$, then we have the following:

\begin{enumerate}[(i)]
\item $T$ is homeomorphic to $S$, and $T$ is a syndetically scrambled set for $f$. If $S$ is $\eps$-scrambled, so is $T$. (In fact, these are true for any $x\in X$.)

\item $T\subset \Tran(f)$; and in particular, $\overline{\orbp(y,f)}$ contains infinitely many distinct minimal sets for any $y\in T$.

\item $\SProx(f)(y)\cap \M(f)=\emptyset$ for every $y\in T$.
\end{enumerate}
\end{thm}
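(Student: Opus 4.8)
The plan is to exploit the fact that $T = S * x$ is just a left-translate of the syndetically scrambled set $S$, and that translation by a fixed group element is a homeomorphism of $X$ that does not interact too badly with the homomorphism $f$. For part (i), I would first note that $y\mapsto y*x$ is a homeomorphism of $X$, so $T$ is homeomorphic to $S$. To see that $T$ is syndetically scrambled, I would take two distinct points $s_1 * x, s_2 * x \in T$ with $s_1 \ne s_2 \in S$, and compute $f^n(s_i * x) = f^n(s_i) * f^n(x)$ using the homomorphism property. Thus $f^n(s_1*x)$ and $f^n(s_2*x)$ differ by a right-translate (by $f^n(x)$) of the pair $f^n(s_1), f^n(s_2)$. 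The key point is that since $X$ is a compact metrizable group, one may choose a right-invariant (or bi-invariant) compatible metric, so that $d(f^n(s_1)*f^n(x), f^n(s_2)*f^n(x)) = d(f^n(s_1), f^n(s_2))$; this immediately transfers both the syndetic-proximality estimates and the non-asymptoticity (and the $\eps$-scrambled property) from $S$ to $T$. If one does not want to change the metric, one can instead use uniform continuity of the multiplication map together with compactness to get two-sided comparison estimates, but the invariant-metric route is cleanest.

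For part (ii), I would use that $x \in \Tran(f)$ and that $e \in S$, so $e * x = x \in T$; more importantly, for any $s \in S$ the point $s * x$ should still be transitive. Here I would argue that left translation $L_s$ commutes appropriately: since $f$ is a homomorphism, $f^n(s*x) = f^n(s)*f^n(x)$, and one has to see that the orbit closure of $s*x$ is all of $X$. The cleanest approach: the orbit of $x$ is dense, and I want to show the orbit of $s*x$ is dense. Given a nonempty open $U$, I want $n$ with $f^n(s)*f^n(x) \in U$; the obstacle is that $f^n(s)$ moves with $n$. However, since $s \in \SProx(f)(e)$ — recall $e \in S$ and $S$ is syndetically scrambled, so $(s,e)\in\SProx(f)$ — the set $\{n : d(f^n(s), e) < \delta\}$ is syndetic, hence in particular has upper density bounded below and is infinite; restricting the transitive orbit of $x$ along this syndetic set (and using that $x$ being transitive, $N(x, U')$ is thick-ish enough, or rather that we can find $n$ in the syndetic set with $f^n(x)$ in a suitable small open set) should give $f^n(s)*f^n(x)$ close to $f^n(x)$ and inside $U$. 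Once $T \subset \Tran(f)$ is established, the ``in particular'' clause follows from the quoted fact that a weakly-mixing-type / transitive group automorphism with $\overline{\M(f)} = X$ has many minimal sets — more precisely, since $\overline{\M(f)}=X$ and $X$ is infinite, minimal sets are dense, so the orbit closure of any transitive point (which is all of $X$) meets infinitely many distinct minimal sets.

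For part (iii), I must show $\SProx(f)(y) \cap \M(f) = \emptyset$ for $y \in T$, i.e. no minimal point is syndetically proximal to $y$. Suppose $w \in \M(f)$ with $(y, w) \in \SProx(f)$. Since $y = s*x$ with $s \in S$ and $(s, e) \in \SProx(f)$, and $\SProx(f)$ is translation-compatible in the same sense as above (because it is defined via the invariant metric), we get $(x, s^{-1}*w)$ or some such translated pair is syndetically proximal; more directly, $(y,w)\in\SProx(f)$ combined with $(y, x)$'s relationship should force $x$ to be syndetically proximal to a minimal point. But $x \in \Tran(f)$ and $X$ is infinite, so $\overline{\orbp(x,f)} = X$ contains more than one minimal point (by (ii)); a point syndetically proximal to a minimal point $w$ must itself have $w$ as its unique minimal point in its orbit closure (standard fact), contradicting that the orbit closure of $x$, equal to all of $X$, contains infinitely many minimal sets. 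The main obstacle across all three parts is making precise the ``translation-equivariance'' of the relations $\SProx(f)$, $\Asy(f)$ and of the transitive/minimal-point structure: everything hinges on choosing a bi-invariant compatible metric on the compact group $X$ (which exists for compact metrizable groups) so that left and right translations are isometries, after which $f^n(s*x) = f^n(s)*f^n(x)$ lets one pass all quantitative estimates from $S$ (and from $x$) to $T$ essentially verbatim; once that is set up, (i) is immediate, (ii) needs the extra syndeticity input from $(s,e)\in\SProx(f)$ to control the drift $f^n(s)$, and (iii) is a short contradiction using (ii).
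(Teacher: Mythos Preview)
Your treatment of (i) and (iii) is essentially the same as the paper's: the bi-invariant metric makes (i) immediate, and (iii) follows directly from (ii) together with the standard fact (Proposition~\ref{uniMin} in the paper) that a point syndetically proximal to a minimal point has a \emph{unique} minimal set in its orbit closure. Your detour in (iii) through $x$ is unnecessary---the argument applies to $y$ itself once you know $y\in\Tran(f)$---but it is not wrong.

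The real gap is in (ii). You claim that $\{n:d(f^n(s),e)<\delta\}$ is syndetic and then say $N(x,U')$ is ``thick-ish enough'' to meet it. Neither half is sufficient as stated: a syndetic set need not meet an arbitrary infinite set, and for a merely transitive point $x$ the visit set $N(x,U')$ to a general open set is only guaranteed to be infinite. Two upgrades are required, and both are what the paper actually does. First, since $e$ is a fixed point and $(s,e)\in\SProx(f)$, the set $\{n:d(f^n(s),e)<\delta\}$ is in fact \emph{thickly syndetic} (use uniform continuity of $f,\dots,f^N$ to thicken the syndetic set coming from a smaller $\delta$). Second, rather than aiming at an arbitrary open $U$, one aims at a neighbourhood of an arbitrary minimal point $z$: then $N(x,\text{nbhd of }z)$ is \emph{piecewise syndetic} (since $x$ is transitive and $z$ is uniformly recurrent), and piecewise syndetic sets meet every thickly syndetic set. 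This yields $\M(f)\subset\overline{\orbp(y,f)}$, and only now does the hypothesis $\overline{\M(f)}=X$ enter to give $y\in\Tran(f)$. Your sketch never invokes $\overline{\M(f)}=X$ for the transitivity of $y$, only for the ``infinitely many minimal sets'' afterthought; but it is essential already for the first step.
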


\begin{proof}
Any compact metrizable group admits a two-sided invariant metric (see Theorem 8.6 of \cite{hero}), that is $d(a,b)=d(a*z,b*z)=d(z*a,z*b)$ for any $a,b,z\in X$. Let $d$ be one such metric on $X$.

\noindent (i) In any topological group, the right multiplication by a fixed element is a homeomorphism, and therefore $T$ is homeomorphic to $S$. For the other statements, note that if $a,b\in S$, then $d(f^n(a*x),f^n(b*x))=d(f^n(a)*f^n(x),f^n(b)*f^n(x))=d(f^n(a),f^n(b))$ by the invariant nature of the metric $d$.

\noindent (ii) Let $y=a*x\in T$, where $a\in S$, let $z\in M(f)$, and let $\eps>0$. We have that $\{n\in \mathbb N:d(e,f^n(a))<\eps/2\}$ is thickly syndetic and $\{n\in \mathbb N:d(z,f^n(x))<\eps/2\}$ is piecewise syndetic. Let $n\in \mathbb N$ be in the intersection of the above two sets. Then, $d(z,f^n(y))=d(z,f^n(a*x))=d(z,f^n(a)*f^n(x))\le d(z,f^n(x))+d(f^n(x),f^n(a)*f^n(x))=d(z,f^n(x))+d(e,f^n(a))<\eps$. This argument shows that $M(f)\subset \overline{\orbp(y,f)}$, and therefore $y\in \Tran(f)$ since $\overline{M(f)}=X$ by assumption. The second assertion in (ii) can be deduced as follows: since $f$ is a transitive non-minimal map, any minimal set of $f$ must be nowhere dense; so there must be infinitely many minimal sets since $\overline{\M(f)}=X$.

Finally (iii) follows from (ii) and Proposition \ref{uniMin} proved later in this article (simply $\SProx(f)(y)\cap M(f)=\emptyset$ for every $y\in T$).
\end{proof}

\subsection{Subshifts of finite type}

It is not difficult to show that if $(X,\sigma)$ is an infinite transitive subshift of finite type (SFT), then there are $m\ge 2$ and $k\in \mathbb N$ such that a copy of the $k$-th power of the full shift on $m$ symbols is embedded in $(X,\sigma)$ (see the proof of Theorem \ref{SFTperiod} below). Therefore we easily obtain the following.

\begin{cor}\label{cor:inf_trans_SFT}
If $(X,\sigma)$ is an infinite transitive SFT, then there are $k\in \mathbb N$ and a $\sigma^k$-invariant Mycielski set  $S\subset X$ such that $S$ is syndetically scrambled for $\sigma$.
\end{cor}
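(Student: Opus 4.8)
The plan is to combine the structural claim about infinite transitive SFTs (a power of the full shift embeds) with the tools already developed in Section~\ref{sec:tools}, in particular Lemma~\ref{InvMyc}. First I would invoke the embedding fact referenced in the statement: there are $m\ge 2$ and $k\in \mathbb N$ together with a conjugacy $\varphi$ from $(\Sigma_m^+,\sigma^k)$ (or $(\Sigma_m,\sigma^k)$, depending on whether $X$ is one- or two-sided) onto a subsystem $Y=\varphi(\Sigma_m^+)\subset X$ that is $\sigma^k$-invariant. By Theorem~\ref{thm:full_shift}, $(\Sigma_m^+,\sigma)$ has a dense Mycielski $\sigma$-invariant syndetically scrambled set; since being (syndetically) scrambled for $\sigma$ is the same as being (syndetically) scrambled for $\sigma^k$, this set is also $\sigma^k$-invariant and syndetically scrambled for $\sigma^k$. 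Transporting it through the conjugacy $\varphi$ yields a Mycielski set $S_0\subset Y\subset X$ that is $\sigma^k$-invariant and syndetically scrambled for $\sigma^k$, hence for $\sigma$.

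The remaining issue is that $S_0$ need not be dense in $X$, only dense in the subsystem $Y$. To fix this I would apply Lemma~\ref{InvMyc} to the dynamical system $(X,\sigma^k)$ with the invariant scrambled set $S_0$ and a Cantor subset $K\subset S_0$. The hypothesis to verify is that for any nonempty open $U\subset X$ there is $n\in\mathbb N$ with $(\sigma^k)^n(K)\subset (\sigma^k)^n(U)$, equivalently $\sigma^{kn}(K)\subset \sigma^{kn}(U)$. This is where one uses transitivity of the SFT: because $X$ is an infinite transitive SFT, $\sigma^k$ restricted to the relevant irreducible piece is mixing on a cyclically moving union of components, and in fact one can arrange the embedded copy $Y$ so that $\sigma^{k}$ is mixing on $X$ after passing to the right power; more directly, for a transitive SFT and any nonempty open (cylinder) set $U$ one has $\sigma^{kn}(U)=X$ for all large $n$ (this is exactly the exactness-type property used for the full shift in Theorem~\ref{thm:full_shift}), so trivially $\sigma^{kn}(K)\subset X=\sigma^{kn}(U)$.

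The main obstacle I anticipate is precisely the bookkeeping around the period: a transitive SFT need not be mixing, so $\sigma^k$ need not be exact on all of $X$, and one must choose $k$ (possibly replacing it by a multiple) so that $\sigma^k$ restricted to each cyclic component is mixing, and so that the embedded full shift sits inside one such component. Concretely I would: (1) pass to a power $\sigma^p$ so that $X$ decomposes into $p$ clopen pieces cyclically permuted by $\sigma$, each piece being a mixing SFT under $\sigma^p$; (2) inside one mixing piece find the embedded copy of $(\Sigma_m^+,\sigma^{q})$ for suitable $q$; (3) take $k$ to be a common multiple so that on that piece $\sigma^k$ is both mixing and restricts to a power of the full shift, then run the argument above. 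On the mixing piece $\sigma^k$ is exact (a mixing SFT is exact after passing to a power, and in any case $\sigma^{kn}$ maps every cylinder onto the whole piece for large $n$), which gives the needed covering hypothesis of Lemma~\ref{InvMyc} relative to that piece; since $S_0$ is contained in and dense in that piece and the piece is clopen, density in $X$ is then obtained by additionally spreading the construction over the $p$ cyclic translates, i.e. replacing $S_0$ by $\bigcup_{i=0}^{p-1}\sigma^i(S_0)$ before applying the Lemma. I expect the proof in the text to be short precisely because all the hard work is hidden in "see the proof of Theorem~\ref{SFTperiod} below" and in Lemma~\ref{InvMyc}; my write-up would simply assemble these.
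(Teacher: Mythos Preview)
Your first paragraph already proves the corollary and matches the paper's argument exactly: embed a full shift so that $(Y,\sigma^k)$ is conjugate to $(\Sigma_m^+,\sigma)$, transport the $\sigma$-invariant Mycielski syndetically scrambled set from Theorem~\ref{thm:full_shift}, and observe that syndetically scrambled for $\sigma^k$ is the same as for $\sigma$. That is all the paper does; the corollary has no separate proof beyond the sentence preceding it.

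Everything from ``The remaining issue is that $S_0$ need not be dense in $X$'' onward is unnecessary, because the statement only asks for a \emph{Mycielski} set, not a \emph{dense} Mycielski set. You have misread the conclusion. Density in $X$ is handled later, under the extra hypothesis of mixing, in Theorem~\ref{SFTperiod}; the point of the present corollary is precisely that for a merely transitive SFT one gets a (not necessarily dense) invariant Mycielski syndetically scrambled set essentially for free. Your attempt to force density via Lemma~\ref{InvMyc} also contains an incorrect claim: for a transitive but non-mixing SFT it is \emph{not} true that $\sigma^{kn}(U)=X$ for large $n$ and arbitrary nonempty open $U$; you correctly identify this obstacle in your third paragraph, but the whole detour (periodic decomposition, spreading over cyclic translates) is work the corollary does not require.
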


Then the following question is natural to ask: is it possible to make $S$ dense in $X$ and $\sigma$-invariant at the same time?
First of all we must assume that $(X,\sigma)$ is mixing as otherwise we have a regular periodic decomposition of $X$ by disjoint sets, which is an obstruction for
density of scrambled set.

If we consider mixing SFT with a fixed point (which must be contained in the system, if $S$ is invariant scrambled set), then the above question has the affirmative answer. Before proving it, we need an auxiliary lemma.

We say that a subshift $X$ over alphabet $\Al$ is a \emph{vertex shift} if there is a directed graph with set of vertices equal to $\Al$ such that
the set of infinite paths (resp. bi-infinite paths for two-sided shift case) is equal to $X$. Obviously, every vertex shift is SFT given by a set of forbidden words $F\subset \Al^2$, and conversely, it can be proved (e.g. see \cite{LM}) that every SFT is conjugated to a vertex shift (possibly, over a larger alphabet).
First we need the following lemma (here $\alp(w)=\set{w_i : i=0,\ldots, |w|-1}$).
\begin{lem}\label{lem:SFTinvS}
Let $X$ be an infinite transitive vertex shift and let $x\in X$ be a periodic point. There exist words $u,w$ so that $u_0=w_0$, $\sigma^p(x)=w^\infty$ for some $p\in \N$, $u^\infty \in X$ and $\alp(u)\neq \alp(w)$.
\end{lem}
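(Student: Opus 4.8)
The plan is to exploit the fact that an infinite transitive vertex shift $X$ cannot have all of its infinite paths pass through a single cycle; there must be some ``branching'' in the defining graph, and we want to locate this branching so that it involves a vertex lying on the cycle through $x$. First I would write $x$ as a periodic point, so $x = (w')^\infty$ for a word $w'$ whose letters $a_0, a_1, \ldots, a_{\ell-1}$ are the vertices visited by the cycle of $x$; by replacing $x$ with a shift $\sigma^p(x)$ we are free to choose which of these vertices is the initial letter $w_0$. Transitivity means the underlying graph is irreducible on the set of vertices actually used by points of $X$, and since $X$ is infinite the graph has at least two distinct cycles (if every vertex lay on a single cycle, $X$ would be finite).

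The key step is to find a vertex $c$ on the cycle of $x$ from which there is a path in the graph leaving the cycle and eventually returning to the cycle, along a route that visits a vertex \emph{not} on the cycle of $x$. Concretely: let $A$ be the set of vertices on the cycle of $x$ and suppose for contradiction that every edge out of every vertex of $A$ stays in $A$ forever, i.e.\ no path starting in $A$ ever leaves $A$. Combined with irreducibility (every vertex of the graph can reach $A$, and $A$ can reach every vertex), this would force the whole vertex set to equal $A$, making the graph a union of finitely many cycles on finitely many vertices and hence $X$ finite — a contradiction. So there is a vertex $c \in A$ with an out-edge to some vertex outside $A$, and by irreducibility there is a path from that vertex back into $A$; splicing gives a first-return loop $u'$ at $c$ of the form ``go around part of the cycle, then detour through a vertex $b \notin A$, then come back to $c$'', with $\alp(u') \not\subset A = \alp(w')$ in particular $\alp(u') \ne \alp(w')$ once we arrange $A \subset \alp(u')$ too, which we can do by first going all the way around the cycle of $x$ before taking the detour.

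Finally I would assemble the words. Choose $p$ so that $w := $ (the cyclic rotation of $w'$ starting at $c$) has $w_0 = c$, so $\sigma^p(x) = w^\infty$ and $c = w_0$. Let $u$ be the first-return word at $c$ described above, read so that $u_0 = c = w_0$; since $u$ is a loop at $c$ in the graph, $u^\infty \in X$ (here we use that a bi-infinite concatenation of graph-loops at a common vertex is a valid bi-infinite path, hence lies in the vertex shift $X$, which is closed and $\sigma$-invariant). We have arranged $\alp(w) = A \subseteq \alp(u)$ and $\alp(u)$ contains the extra vertex $b \notin A$, so $\alp(u) \ne \alp(w)$, as required. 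The main obstacle I anticipate is the bookkeeping in the contradiction argument of the middle paragraph — making precise that ``no path from $A$ leaves $A$'' together with transitivity forces finiteness, and being careful about whether $X$ might contain spurious isolated vertices not reachable from $A$ (which transitivity rules out, but one must invoke that only vertices genuinely used by $X$ are in play, or pass to the irreducible subgraph first).
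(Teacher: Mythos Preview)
Your argument has a genuine gap in the contradiction step of the middle paragraph. You set $A=\alp(w')$ and suppose no edge leaves $A$; from irreducibility you correctly deduce that the full vertex set of the graph equals $A$. But from this you jump to ``$X$ finite,'' which does not follow: \emph{every} vertex shift lives on a finite vertex set, and the graph on $A$ may have many edges beyond the single cycle coming from $x$. Concretely, if the periodic orbit of $x$ already visits every vertex of the graph, then ``no path from $A$ leaves $A$'' is trivially true, there is no vertex $b\notin A$ to detour through, and yet $X$ can certainly be infinite. Your construction simply does not apply to this case.

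The paper handles this missing case with a different device. When $\alp(w)$ equals the full alphabet, it lets $u$ be a cycle of \emph{minimal} length $t$ in the graph. If $t=|\alp(w)|$ the graph would be a single Hamiltonian cycle and $X$ would be finite; so $t<|\alp(w)|$, and since a cycle of length $t$ visits at most $t$ vertices, $\alp(u)\subsetneq\alp(w)$. One then shifts $x$ so that $w_0=u_0$. Your detour argument is essentially the paper's first case ($\alp(w)$ a proper subset of the alphabet) and is fine there; to complete the proof you must add this second case, producing a $u$ with $\alp(u)\subsetneq\alp(w)$ rather than $\alp(u)\supsetneq\alp(w)$.
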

\begin{proof}
Let $G$ be the graph defining $X$ and let $A$ be the set of vertices, that is the alphabet of $X$.
Fix $w$ such that $x=w^\infty$. If $\alp(w)\neq A$, take $u$ to be a word representing a cycle passing through $w_0$ and some $b\in A\setminus \alp(w)$. If $\alp(w)=A$, let $t$ be the minimal length of a cycle in the graph $G$. If $t=|A|$, then $G$ reduces to a cycle of length $|A|$, forcing $X$ to be a finite set, which is a contradiction. Hence $t<|A|$, and let $u$ be a word representing a cycle of length $t$. Since $u_0\in A=\alp(w)$, replacing $x$ with $\sigma^p(x)$ for some $p\ge 0$ and $w$ with its cyclic permutation, we can have $\sigma^p(x)=w^\infty$ and $u_0=w_0$.
\end{proof}

\begin{thm}\label{SFTperiod}
Let $(X,\sigma)$ be an infinite mixing SFT and let $x\in X$ be a periodic point with period $p$. Then there is a dense Mycielski syndetically scrambled set $S\subset X$ invariant for $\sigma^p$ and such that $x\in S$.
\end{thm}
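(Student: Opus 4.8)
<br>

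The plan is to reduce the two-sided mixing SFT to a vertex shift (using the fact, quoted above, that every SFT is conjugate to a vertex shift), then to build the required Cantor syndetically scrambled set explicitly and finally apply Lemma~\ref{InvMyc} to upgrade it to a dense Mycielski $\sigma^p$-invariant set. So first I would replace $(X,\sigma)$ by a conjugate vertex shift; a conjugacy carries syndetically scrambled sets, density and $\sigma^p$-invariance to the original system, so it is harmless. Next, apply Lemma~\ref{lem:SFTinvS} to the periodic point $x$: after passing to the cyclic permutation, we have words $u,w$ with $u_0=w_0$, $x=w^\infty$ (so $|w|$ divides the period $p$; in fact we may arrange $|w|=p$ by taking $w$ to be the full period word), $u^\infty\in X$, and $\alp(u)\neq\alp(w)$. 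The key point of the disagreement of alphabets is that it forces a uniform separation: there is a letter appearing in exactly one of $u,w$, and mixing will let us interleave arbitrarily long blocks of $u^\infty$-type and $w^\infty$-type behaviour while keeping the orbit of $x$ (which is just $w^\infty$) far from the points we construct on a syndetic set of times.

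The heart of the argument is the construction of a Cantor syndetically scrambled set $K$ with the two hypotheses of Lemma~\ref{InvMyc}: that $K$ lies inside an $f$-invariant syndetically scrambled set $S$ (here $S=\bigcup_{n\in\Z}\sigma^{np}(K)$ once $K$ is built so that distinct members of $K$ have disjoint $\sigma^p$-orbits), and that for every nonempty open $U$ there is $n$ with $\sigma^n(K)\subset\sigma^n(U)$. The latter is the familiar ``collapsing'' trick for mixing subshifts: I would build each point $y\in K$ so that on a fixed infinite sequence of long time windows $y$ reads a prescribed common word (coming from the cylinder $U$), and between these windows $y$ encodes, in long blocks of either $u$ or $w$, a point of a Cantor scrambled set $D\subset\Sigma_2^+$ — the same coding idea used in Theorem~\ref{thm:full_shift} and in \cite{tksm}. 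Mixing supplies the connecting words between the prescribed blocks, the $u$-blocks, and the $w$-blocks; choosing the window lengths to grow fast guarantees that on a thick set of times two members of $K$ are close (giving syndetic proximality, since the complement is contained in a bounded-gap set of ``transition'' regions — here one must be a little careful that the gaps between good windows are bounded, which is arranged by interleaving windows periodically in a ``diagonal'' fashion rather than letting them run off to infinity), while distinctness of the underlying points of $D$ makes the pair non-asymptotic. To make $x=w^\infty\in S$ one starts one member of the family from the all-$w$ coding; to get disjointness of $\sigma^p$-orbits across distinct members one uses, as in the proof of Theorem~\ref{thm:full_shift}, a bookkeeping device that forces each new point to contain a subword not present in the previously chosen ones.

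The main obstacle I expect is reconciling three competing demands on the times: syndetic proximality needs the ``agreement windows'' to recur with bounded gaps; non-asymptoticity needs the ``coding regions'' to recur infinitely often; and the collapsing condition $\sigma^n(K)\subset\sigma^n(U)$ for \emph{every} basic open $U$ needs, for each $U$, one window in which \emph{all} of $K$ simultaneously reads the word from $U$. These are simultaneously satisfiable only if the windows are laid out carefully — essentially a single increasing sequence of windows, the $i$-th of which is devoted to the $i$-th basic open set, with the coding of $D$ packed into the gaps; one then checks that the gaps stay bounded (so proximality survives) because the window lengths, though growing, are placed at controlled positions. Verifying syndeticity of $\{n:d(\sigma^n(y),\sigma^n(y'))<\eps\}$ rigorously from this layout, together with the disjoint-orbit bookkeeping, is where the real work lies; once $K$ is in hand, Lemma~\ref{InvMyc} (with $K$ the Cantor set and $S$ its $\sigma^p$-orbit) delivers the dense Mycielski $\sigma^p$-invariant syndetically scrambled set containing $x$ with essentially no further effort, since the collapsing hypothesis of that lemma is exactly what the construction was designed to provide.
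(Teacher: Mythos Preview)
Your plan has a structural obstruction in the two-sided case you explicitly target. On a two-sided subshift the shift $\sigma$ (and hence $\sigma^p$) is a homeomorphism, so the collapsing hypothesis of Lemma~\ref{InvMyc}, namely $f^n(K)\subset f^n(U)$, is equivalent to $K\subset U$; this cannot hold for every nonempty open $U$. Thus Lemma~\ref{InvMyc} is simply inapplicable to invertible systems, and no arrangement of ``windows'' will manufacture its hypothesis. (Your reading of the collapsing condition as ``all of $K$ reads the word from $U$ at some common time'' is also off: in the one-sided case the condition holds automatically because a mixing one-sided SFT is exact, so the elaborate window scheme is unnecessary there and impossible in the invertible case.) A further issue: Lemma~\ref{InvMyc} gives no control over membership of a prescribed point in its output, so even where it applies it would not yield $x\in S$.

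The paper therefore does \emph{not} go through Lemma~\ref{InvMyc} here. It imports a $\sigma$-invariant Mycielski syndetically scrambled set $K'\subset\Sigma_2^+$ containing $0^\infty$ from Theorem~\ref{thm:full_shift}, pushes it into $X$ by the equal-length block code $0\mapsto B=w^q$, $1\mapsto C=v^p$, and then obtains density \emph{directly}: disjoint Cantor pieces $K_i$ of the image have their initial segments overwritten by $u(i)u(i)'$ for an enumeration $u(i)$ of $L(X)$, with mixing supplying the connectors. The substantive step, which your outline does not reach, is to check that $\bigcup_{i\ge 0}\sigma^{ip}(D)$ remains scrambled, i.e.\ that $(\sigma^{ip}(y),\sigma^{jp}(z))$ is non-asymptotic for all $i,j$. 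This is exactly where $\alp(v)\ne\alp(w)$ is exploited: a letter $b$ in only one of the alphabets has its first and last occurrences in $B=w^q$ within $p$ of the ends, so a $B$-block aligned against a $C$-block yields a mismatch that survives any shift by $mp$ with $0\le m\le q$. Your alternative syndeticity argument (bounded gaps between agreement windows while the $i$-th window carries the $i$-th basic word) is internally inconsistent: unbounded window lengths force the intervening coding regions either to be eventually absent (killing non-asymptoticity) or to have unbounded length (killing syndeticity).
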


\begin{proof}
We prove the case of one-sided shift. The case of two-sided shift can be proved similarly (see the proof of Theorem~\ref{thm:full_shift}). Without loss of generality we may assume that $X$ is a vertex shift, in particular all forbidden words are of length 2.
Let $v,w$ be words provided by Lemma~\ref{lem:SFTinvS} for $x$, in particular $\alp(v)\neq \alp(w)$.
If $S$ is $\sigma^p$-invariant syndetically scrambled set, then $f$ is injective on $S$ and so $f^n(S)$ is also $\sigma^p$-invariant syndetically scrambled set. Thus without loss of generality we may assume that $x=w^\infty$. Then clearly we may assume that $|w|=p$ and denote $|v|=q$. Put $a=w_0$.

Consider the words $B=w^q$ and $C=v^p$. Then $|B|=|C|=pq$, $B\ne C$, and any sequence obtained by the concatenation of $B$'s and $C$'s belongs to $X$. Let $K'\subset \Sigma_2^+$ be a Mycielski $\sigma$-invariant syndetically  scrambled set for $\sigma$ such that $0^\infty \in K'$ and let $K\subset X$ be the corresponding set obtained by replacing $0$ by $B$ and $1$ by $C$ in each element of $K'$. In particular, note that $x=w^\infty=B^\infty\in K$. Choose pairwise disjoint Cantor sets $K_0,K_1,K_2,\ldots \subset K$ in such a way that $x\in K_0$. Let $u(0),u(1),u(2),\ldots $ be a listing of all words in $L(X)$. We may assume that $u(0)=a$. Since $(X,\sigma)$ is a mixing SFT, for each $i\in \mathbb N$, there is a word $u(i)'$ such that $|u(i)u(i)'|$ is a multiple of $pq$ and $u(i)u(i)'a\in L(X)$. Let $S_0=K_0$ (and note that $x\in S_0$). For $i\in \mathbb N$, let $S_i\subset X$ be the Cantor set obtained by replacing the initial word of length $|u(i)u(i)'|$ of each $y\in K_i$ by $u(i)u(i)'$.
Elements obtained after this modification remain in $X$ since both the words $B$ and $C$ start with the letter $a$ and $X$ is a vertex shift. Finally put $D=\bigcup_{i=0}^\infty S_i$. It may be verified that $D$ is a dense Mycielski syndetically scrambled set.
Furthermore, since $K$ was $\sigma^{pq}$-invariant, for any $y,z\in D$ and any $i,j\in \N$ (if $y=z$ then we assume $i\neq j$) the pair $(\sigma^{ipq}(y),\sigma^{jpq}(z))$ is syndetically scrambled. Then we can find arbitrarily large $s$ such that
$$
y_{[(i+s)pq,(i+s+1)pq)}=B \quad\text{ and }\quad z_{[(j+s)pq,(j+s+1)pq)}=C
$$
or vice-versa. We will consider only the case when there is $b\in \alp(B)\setminus \alp(C)$, symmetric case $b\in \alp(C)\setminus \alp(B)$ can be proved following similar lines. Denote by $r,t$, respectively, the first and the last occurrence of symbol $b$ in $B$. Note that $r<p$ and $pq-t<p$ since $B=w^q$ and $|w|=p$. Additionally observe that $z_{l+(j+s)pq}\neq b$ for any $l\in (t-pq,r+pq)$.
This shows that for any $m\in [0,pq]$ pairs $(\sigma^{ipq+m}(y),\sigma^{jpq}(z))$ and $(\sigma^{ipq}(y),\sigma^{jpq+m}(z))$ are syndetically scrambled.
Note that the pair $(x,y)$ is syndetically proximal for every $y\in D$, and so  $(\sigma^{ip}(y),x)$ is also syndetically proximal for any $i\geq 0$.
Therefore the set $S=\bigcup_{i=0}^\infty \sigma^{ip}(D)$ is $\sigma^p$-invariant syndetically scrambled set.
Of course it is also dense Mycielski scrambled set, since $\sigma$ is injective on $D$.
\end{proof}

\subsection{Synchronizing subshifts}

\begin{prop}
Let $(X,\sigma)$ be an infinite one-sided or two-sided synchronizing subshift. Then there is a Mycielski, syndetically scrambled set which is $\sigma^k$-invariant for some integer $k\in \N$. \end{prop}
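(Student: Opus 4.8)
The plan is to reduce the synchronizing case to the mixing SFT case (Theorem~\ref{SFTperiod}) by exploiting the exhaustion of a coded system by transitive SFTs described in the Krieger/Blanchard--Hansel fact quoted in the Preliminaries. First I would recall that a synchronizing subshift $(X,\sigma)$ is, by the remark after the definition of synchronizing subshifts, a coded system $X=X_W$ with $W=\set{us : sus\in L(X)}$ for a synchronizing word $s$. Hence by the theorem attributed to Krieger there is an increasing sequence of transitive SFTs $X_1\subset X_2\subset\cdots$ with $X=\overline{\bigcup_i X_i}$; since $X$ is infinite we may choose the indexing so that some $X_N$ is an \emph{infinite} transitive SFT.

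Next I would pass to a mixing SFT inside $X_N$. A transitive SFT has a regular periodic decomposition, so for a suitable $k\in\N$ the map $\sigma^k$ restricted to one of the cyclically permuted pieces $Y$ of $X_N$ is a mixing SFT; equivalently, $(Y,\sigma^k)$ is conjugate (after recoding) to an infinite mixing SFT (the fact that the piece is infinite follows because $X_N$ is infinite and the decomposition is into finitely many pieces). This $Y$ need not contain a periodic point with a convenient period, but every nonempty SFT contains a periodic point, so pick any periodic point $x\in Y$ for $\sigma^k$; its $\sigma^k$-period is some $p$.

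Then I would invoke Theorem~\ref{SFTperiod} applied to the infinite mixing SFT $(Y,\sigma^k)$ and the periodic point $x$: this yields a Mycielski set $S_0\subset Y\subset X$ that is syndetically scrambled for $\sigma^k$ and invariant under $(\sigma^k)^p=\sigma^{kp}$. Since a syndetically scrambled pair for $\sigma^k$ is automatically a syndetically scrambled pair for $\sigma$ (this is recorded right after the definition of scrambled pairs: $(x,y)$ is a (syndetically) scrambled pair for $f$ iff it is one for every power $f^j$), the set $S_0$ is already syndetically scrambled for $\sigma$. Taking $k':=kp$, the set $S_0$ is $\sigma^{k'}$-invariant, which gives the desired conclusion; the two-sided case is handled by the same reduction, using the two-sided version of Theorem~\ref{SFTperiod} as indicated in its proof.

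The main obstacle I expect is the bookkeeping around the passage to a \emph{mixing infinite} SFT: one must be careful that the chosen transitive SFT in the Krieger exhaustion is genuinely infinite (this uses that $X$ itself is infinite and that the $X_i$ increase to a dense subset of $X$), and that after taking the periodic piece under the period-$k$ decomposition the resulting mixing SFT is still infinite. Everything else — the conjugacy of an SFT to a vertex shift, existence of periodic points, and the ``$\sigma^k$ to $\sigma$'' transfer of syndetic scrambledness — is routine. Note that unlike the SFT theorems we make no claim about density or about containing a prescribed point, only Mycielski-ness plus $\sigma^{k'}$-invariance, so no further work via Lemma~\ref{InvMyc} or Lemma~\ref{cdense} is needed here.
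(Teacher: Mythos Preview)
Your argument is correct, but it is genuinely different from the paper's proof and more roundabout than necessary. The paper does not invoke the Krieger/Blanchard--Hansel exhaustion by SFTs at all here. Instead it uses the synchronizing word $s$ directly: since $X$ is infinite one can find $u\ne u'$ with $|u|=|u'|$ and $su,su'\in L(X)$, and by transitivity words $v,v'$ with $suvs,\,su'v's\in L(X)$; then $B=suvsu'v'$ and $C=su'v'suv$ have equal length and every concatenation of $B$'s and $C$'s is in $L(X)$. This embeds a full $2$-shift as a $\sigma^{|B|}$-invariant subset $\Lambda\subset X$, and Theorem~\ref{thm:full_shift} (not Theorem~\ref{SFTperiod}) provides the required Mycielski $\sigma^k$-invariant syndetically scrambled set inside $\Lambda$.

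What your route buys is generality: the same reduction via Krieger's exhaustion works verbatim for any infinite coded system, and indeed the paper uses exactly this observation later (the corollary right before Proposition~\ref{prop:mixing_coded}, invoking Corollary~\ref{cor:inf_trans_SFT}). Note, incidentally, that once you have an infinite transitive SFT $X_N\subset X$ you may simply quote Corollary~\ref{cor:inf_trans_SFT} and be done; the detour through the periodic decomposition and Theorem~\ref{SFTperiod} is unnecessary bookkeeping. What the paper's route buys is self-containment: it avoids Krieger's nontrivial circular-code theorem and gives an explicit embedded full shift, which is also the natural starting point for the stronger density and tracking statements proved next in Theorem~\ref{MixSynchro}.
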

\begin{proof}
Let $s\in L(X)$ be a synchronizing word. Since $X$ is infinite, there are $u,u'\in L(X)$ such that $|u|=|u'|$, $u\ne u'$, and $su,su'\in L(X)$. By transitivity, find $v,v'\in L(X)$ such that $suvs, su'v's\in L(X)$ (here $v$ and $v'$ may have different lengths). Define two words $B,C$ as $B=suvsu'v'$, $C=su'v'suv$. Then $B,C\in L(X)$, $|B|=|C|$, $B\ne C$, and any concatenation of $B$'s and $C$'s belongs to $L(X)$.

This shows that there is a set $\Lambda\subset X$ such that $\sigma^k(\Lambda)=\Lambda$ and systems $(\Lambda,\sigma^k)$ and $(\Sigma^+_2,\sigma)$ (resp. $(\Sigma_2,\sigma)$ if $X$ is two-sided subshift) are conjugate. Then result follows by Theorem~\ref{thm:full_shift}.
\end{proof}

The following fact may be known, however we could not provide any reference containing it, hence a proof is included.

\begin{prop}
Every totally transitive synchronizing subshift $(X,\sigma)$ is mixing.
\end{prop}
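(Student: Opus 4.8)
The plan is to prove that a totally transitive synchronizing subshift $(X,\sigma)$ is mixing by exploiting the synchronizing word to reduce the mixing condition to an arithmetic statement about the set of return times, and then using total transitivity to force that set of return times to be cofinite.

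First I would fix a synchronizing word $s \in L(X)$ and consider the set $R = \{ n \in \N : s w s \in L(X) \text{ for some } w \text{ with } |sw| = n \}$, i.e.\ the lengths of ``first-return-type'' loops at $s$ (including the possibility $w = \lambda$, or more carefully, all $n$ such that there is an occurrence of $s$ exactly $n$ positions after an occurrence of $s$ in some point of $X$). The synchronizing property of $s$ makes $R$ closed under addition: if $sw_1s, sw_2s \in L(X)$ then $sw_1sw_2s \in L(X)$, so $R + R \subset R$. Since $X$ is infinite and transitive, $R$ is a nonempty semigroup of positive integers. A standard number-theoretic fact says that a sub-semigroup of $(\N,+)$ with $\gcd$ equal to $1$ is cofinite. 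Hence the first key step is to show $\gcd(R) = 1$: if $\gcd(R) = d > 1$, then the positions of occurrences of $s$ in any point of $X$ all lie in a single residue class mod $d$, which (again using synchronization and transitivity to glue things together) would let me build a $\sigma$-invariant clopen partition of $X$ into $d \geq 2$ pieces cyclically permuted by $\sigma$ — but that contradicts total transitivity, since $(X,\sigma^d)$ would then be non-transitive. So total transitivity gives $\gcd(R) = 1$, and therefore $R$ is cofinite.

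Next I would upgrade this to full mixing. Let $U, V$ be nonempty open subsets of $X$; shrinking them, I may assume $U = C_X[\alpha]$, $V = C_X[\beta]$ for words $\alpha, \beta \in L(X)$. By transitivity there are words $p, q$ with $\alpha p s, s q \beta \in L(X)$ (using that $s$ is synchronizing: we can route any admissible word into an occurrence of $s$ and out of one). Now for any sufficiently large $n$, write $n = |\alpha p s| + k + (\text{stuff})$; more precisely, pick $r \in R$ large and adjust — since $R$ is cofinite, for all large enough $n$ I can find $w$ with $sws \in L(X)$ and $|\alpha p s w|$ hitting the required value so that $\alpha p s w q \beta \in L(X)$ has $\beta$ starting at position exactly $n$. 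Concatenating, $\alpha (p s w q) \beta \in L(X)$ by repeated use of the synchronizing property, which gives a point of $U \cap \sigma^{-n}(V)$. Hence $N(U,V)$ is cofinite, i.e.\ $\sigma$ is mixing.

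The main obstacle I expect is the step identifying the $\gcd$ obstruction with a periodic decomposition: one has to argue carefully that if all occurrences of $s$ in every point of $X$ lie in a fixed residue class mod $d$, then $X$ genuinely splits into $d$ cyclically-permuted clopen sets. The subtlety is that a given point might contain no occurrence of $s$ at all (e.g.\ points in the ``boundary'' of the coded system), so the decomposition has to be defined via approximation or via the dense orbit. The clean way is: pick a transitive point $z$; its occurrences of $s$ determine a residue class, and continuity plus density of $\orbp(z,\sigma)$ propagate a well-defined $\Z/d\Z$-valued locally constant function on $X$ whose level sets are permuted by $\sigma$ — this function, combined with total transitivity applied to $\sigma^d$, yields the contradiction when $d > 1$. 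All the word-concatenation bookkeeping in the mixing step is routine once $R$ is known to be cofinite.
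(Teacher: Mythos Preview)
Your approach is correct and takes a genuinely different route from the paper's. The paper first observes that a synchronizing subshift has dense periodic points (any word can be sandwiched between two copies of $s$ and then repeated periodically), and invokes the classical fact that a totally transitive system with dense periodic points is weakly mixing. Weak mixing makes $N(C_X[s],C_X[s])$ thick, so one can find $k+1$ connectors $p_0,\ldots,p_k$ with $sp_is\in L(X)$ and $|p_{i+1}|=|p_i|+1$, where $k$ is the length of a fixed block $v'=sw_3vw_4$; concatenating $u'sp_i(v')^j$ then realises every sufficiently large hitting time, giving mixing.

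You instead work directly with the additive semigroup $R$ of return lengths and show $\gcd(R)=1$ by exhibiting, when $d=\gcd(R)>1$, disjoint nonempty open sets cyclically permuted by $\sigma$, contradicting transitivity of $\sigma^d$. This is more self-contained --- it does not appeal to the external weak-mixing criterion --- and makes the arithmetic obstruction to mixing explicit; the paper's route is shorter precisely because it outsources that step. Both arguments end the same way, gluing through $s$ once $R$ (equivalently $N(C_X[s],C_X[s])$) is known to be cofinite. Your worry about points with no occurrence of $s$ is legitimate but can be sidestepped: the sets $Y_i=\{x: s \text{ occurs in } x \text{ at some position }\equiv i \pmod d\}$ are already pairwise disjoint, open, nonempty, and satisfy $\sigma^{dn}(Y_0)\cap Y_1=\emptyset$ for all $n$, so $\sigma^d$ fails transitivity without ever needing to assign a residue class to the boundary points.
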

\begin{proof}
Let $s$ be a synchronizing word.
First note that since periodic points are dense, $(X,\sigma)$ is weakly mixing.
Fix any two open sets $U,V$. There are words $u,v$ such that $C_X[u]\subset U$, $C_X[v]\subset V$. We can also find words $w_1,w_2,w_3,w_4$ such that
$u's=s w_1 u w_2 s\in L(X)$ and $v's=s w_3 v w_4 s\in L(X)$. Denote $k=|v'|$ and let $p_i$, $i=0,...,k$ be words provided by weak mixing such that
$|p_{i+j}|=|p_i|+j$ and $s p_i s \in L(X)$. Then for any $j\in \N$ and $i=0,\ldots,k$ the word of the form
$u' s p_i (v')^j$
is in $L(X)$ and $|u' s p_i (v')^j|=|u'|+|s|+|p_0|+i+jk$ which shows that $N(U,V)$ is co-finite. The proof is finished.
\end{proof}

\begin{thm}\label{MixSynchro}
Let $(X,\sigma)$ be a mixing one-sided or two-sided synchronizing subshift, let $z,z'\in X$ be distinct, and let $\delta>0$. Then there is a dense Mycielski syndetically scrambled set $S$ for $\sigma$ with the following extra properties:

\noindent (i) $\sigma^k(S)\subset S$ for some $k\in \mathbb N$.

\noindent (ii) For any two distinct $x,x'\in S$, there is an infinite set $M\subset \mathbb N$ such that $d(z,\sigma^n(x))<\delta$ and $d(z',\sigma^n(x'))<\delta$ for every $n\in M$.
\end{thm}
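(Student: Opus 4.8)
The plan is to build the set $S$ directly inside $X$ as a (suitably ``shifted'') copy of a dense Mycielski $\sigma$-invariant syndetically scrambled set of a full shift, exactly in the spirit of Theorem~\ref{thm:full_shift} and Theorem~\ref{SFTperiod}, but carrying along enough freedom to force the visits near $z$ and $z'$ demanded in (ii). First I would fix a synchronizing word $s\in L(X)$ and, using that $X$ is infinite, pick $u\ne u'$ of equal length with $su,su'\in L(X)$; by mixing I can find connecting words $v,v'$ with $suvs,su'v's\in L(X)$. As in the synchronizing-subshift proposition I set $B=suvsu'v'$ and $C=su'v'suv$, so that $|B|=|C|=:N$, $B\ne C$, and any concatenation of $B$'s and $C$'s lies in $L(X)$; this gives an embedded copy of $(\Sigma_2^+,\sigma^N)$ (resp. $(\Sigma_2,\sigma^N)$) inside $X$. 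By Theorem~\ref{thm:full_shift} the full shift $\Sigma_2^+$ has a dense Mycielski $\sigma$-invariant syndetically scrambled set $K'$ containing $0^\infty$; transporting $K'$ through $0\mapsto B$, $1\mapsto C$ produces a Mycielski $\sigma^N$-invariant syndetically scrambled set $K\subset X$ with $B^\infty\in K$, as in the proof of Theorem~\ref{SFTperiod}.

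The next step is the density-plus-recurrence modification. Let $\delta>0$ be given and choose $m$ so large that any two points agreeing on the first $m$ coordinates are within $\delta$; write $a=z_{[0,m)}$ and $a'=z'_{[0,m)}$, so $a,a'\in L(X)$ (after possibly enlarging $m$ we may assume $s$ occurs in both, or we simply note $z,z'\in X$ forces $a,a'\in L(X)$). Enumerate $L(X)=\{u(0),u(1),u(2),\dots\}$. Using mixing and the synchronizing word, for each $i$ I can find a word $p_i$ so that $s\,u(i)\,p_i\,s\in L(X)$ and $|s\,u(i)\,p_i|$ is a multiple of $N$; likewise I can produce, between consecutive $B/C$-blocks, short ``detours'' through $s\,a\,(\text{something})\,s$ and through $s\,a'\,(\text{something})\,s$ of lengths that are multiples of $N$, inserted at positions tending to infinity. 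Concretely, I would take pairwise disjoint Cantor sets $K_0,K_1,\dots\subset K$, replace the initial block of each point of $K_i$ by $s\,u(i)\,p_i$ (giving a dense set $D=\bigcup_i S_i$ exactly as in Theorem~\ref{SFTperiod}), and then, for the recurrence property, further pass to a Mycielski subset where along a fixed infinite set of block-indices every point has been forced to display the word $a$ (for the ``$x$'' role) or $a'$ (for the ``$x'$'' role); since scrambledness is about pairs, one can reserve, for each pair $x\ne x'$, a thick (indeed syndetic) set of times at which $x$ reads $a$ and $x'$ reads $a'$ simultaneously, using that in the underlying $\Sigma_2$-model distinct points disagree on a thick set of coordinates. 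Finally set $S=\bigcup_{k=0}^{N-1}\sigma^k(D')$ where $D'$ is this refined set; then $\sigma^N(S)\subset S$, giving (i) with $k=N$, and $S$ is dense Mycielski and syndetically scrambled because $D'$ is and $\sigma$ is injective on it, while (ii) holds by the forced occurrences of $a$, $a'$.

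Verifying that $S$ remains syndetically scrambled after all these surgeries is routine: the pair estimates only involve finitely many edited coordinates, the tails still come from the full-shift model, and $B,C$ both begin and end with $s$, so concatenations stay legal and the syndeticity of the proximal times is inherited from $K'$; the argument is the same ``bounded-gap'' computation as in Theorem~\ref{SFTperiod}, so I would not repeat it in detail. The genuine obstacle — and the step I expect to take the most care — is arranging property (ii) \emph{compatibly} with density and invariance: I must insert the $a$- and $a'$-windows infinitely often without destroying the disjointness of the $K_i$'s, without breaking $\sigma^N$-invariance (hence all inserted detours must have length divisible by $N$ and must re-synchronize via $s$), and without collapsing distinct points to an asymptotic pair (so the inserted windows must occupy a set of positions of zero density, leaving the thick disagreement set of the $\Sigma_2$-model intact on the complement). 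Handling this bookkeeping — essentially interleaving three infinite to-do lists (all cylinders for density, the times near $z$ and $z'$ for (ii), the shift-orbit for invariance) while keeping block lengths in $N\mathbb{Z}$ — is where the proof's real content lies; everything else follows the templates already established in the previous theorems.
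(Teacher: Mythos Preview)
Your overall framework matches the paper's: fix a synchronizing word, build two equal-length blocks $B,C$ whose arbitrary concatenations lie in $L(X)$, transport a full-shift scrambled set through $0\mapsto B$, $1\mapsto C$, and then edit prefixes to achieve density as in Theorem~\ref{SFTperiod}. Where you diverge is in how property~(ii) is obtained, and this is precisely where the paper's argument is simpler and your proposal is unclear.

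The paper does \emph{not} use generic $u,u'$ for the blocks and then graft on separate ``detours'' through $a=z_{[0,m]}$ and $a'=z'_{[0,m]}$. Instead it chooses $m$ first, sets $u=z_{[0,m]}$ and $u'=z'_{[0,m]}$, and then uses mixing to build
\[
B=spuqu'r,\qquad C=sp'u'q'ur'
\]
with $|p|=|p'|$, $|q|=|q'|$, $|r|=|r'|$. The point is that both $u$ and $u'$ occur in \emph{each} of $B$ and $C$, at swapped aligned positions: at offset $|sp|$ the pair $(B,C)$ reads $(u,u')$, and at offset $|spuq|$ the pair $(C,B)$ reads $(u,u')$. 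Hence whenever two distinct points of the scrambled set disagree at a block position (which happens infinitely often), one reads $B$ and the other $C$, and at the appropriate offset the ordered pair $(x,x')$ shows $(u,u')$. Property~(ii) therefore falls out of the scrambled structure for free, with no extra insertions, no bookkeeping, and no tension with density or invariance.

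Your alternative mechanism---inserting $a$- and $a'$-windows at a sparse set of positions---is where the proposal becomes vague. The phrase ``display the word $a$ (for the `$x$' role) or $a'$ (for the `$x'$' role)'' does not make sense for a single point: which role a point plays depends on the ordered pair, not on the point itself. If the insertions are uniform across all points, every point shows the same word at those positions and you never get $(a,a')$ simultaneously; if the insertion depends on the underlying $\Sigma_2$-symbol at that block, you are effectively rebuilding $B$ and $C$ to contain $a$ and $a'$---which is exactly the paper's construction, done more cleanly from the start. So the ``genuine obstacle'' you identify is an obstacle only because you chose $B,C$ before looking at $z,z'$.

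A secondary issue: setting $S=\bigcup_{k=0}^{N-1}\sigma^k(D')$ gives full $\sigma$-invariance, but then you must show that pairs $(\sigma^j(x),\sigma^{j'}(x'))$ with $j\neq j'$ are still non-asymptotic. This is not automatic from $D'$ being scrambled and ``$\sigma$ injective on it''; in Theorem~\ref{SFTperiod} the analogous step used $\alp(B)\neq\alp(C)$ in an essential way. The paper avoids this entirely by taking $k=|B|$ and getting $\sigma^k$-invariance directly from the block structure, without unioning over unaligned shifts.
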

\begin{proof}
Let $m\in \mathbb N$ be large so that $d(x,y)<\delta$ whenever $x,y\in X$ are with $x_{[0,m]}=y_{[0,m]}$. Let $u=z_{[0,m]}$ and $u'=z'_{[0,m]}$. Also let $s\in L(X)$ be a synchronizing word. Since $(X,\sigma)$ is mixing, there are $p,p',q,q',r,r'\in L(X)$ such that $|p|=|p'|$, $|q|=|q'|$, $|r|=|r'|$ and $spuqu'rs, sp'u'q'ur's\in L(X)$. Define two words $B,C\in L(X)$ as $B=spuqu'r$ and $C=sp'u'q'ur'$. Then $|B|=|C|$, $B\ne C$, and any finite concatenation of $B$'s and $C$'s belong to $L(X)$.
By the same arguments, for any word $w\in L(X)$ there are words $w',w''$ such that $sw'ww''s\in L(X)$ and both $|sw'ww''|$, $|ww''|$ are multiplies of $|B|$.
Similarly to the proof of Theorem~\ref{SFTperiod}, using the words $B,C$ we first construct a syndetically scrambled set, and then replacing some blocks, we modify it to a $\sigma^k$-invariant dense Mycielski syndetically scrambled set, where $k=|B|=|C|$.

Note that if $T$ is a syndetically scrambled set contained in $\{B,C\}^\mathbb N\cup \{0\}$, then for any two distinct $x,x'\in T$, there is an infinite set $M\subset \mathbb N$ such that either $(\sigma^n(x)_{[0,k)},\sigma^n(x')_{[0,k)})=(B,C)$ for every $n\in M$,
or $(\sigma^n(x)_{[0,k)},\sigma^n(x')_{[0,k)})=(C,B)$ for every $n\in M$. In either case we note by the definition of $B,C$ that there is an infinite set $M'\subset \mathbb N$ such that $(\sigma^n(x)_{[0,m]},\sigma^n(x')_{[0,m]})=(u,u')$ for every $n\in M'$. That is, $d(z,\sigma^n(x))<\delta$ and $d(z',\sigma^n(x'))<\delta$ for every $n\in M'$.
The proof is completed.
\end{proof}

\begin{thm}\label{Synchro-factor}
Let $(X,\sigma)$ be an infinite one-sided or two-sided mixing synchronizing subshift, let $(Y,g)$ be a dynamical system, and let $\pi\colon (X,\sigma)\to (Y,g)$ be a countable-to-one factor map. Then $(Y,g)$ has a dense Mycielski syndetically $\epsilon$-scrambled set which is $g^k$-invariant for some $k\in \mathbb N$.
\end{thm}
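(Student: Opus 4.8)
The plan is to obtain Theorem~\ref{Synchro-factor} as a direct combination of Theorem~\ref{MixSynchro} and Proposition~\ref{countable-to-one}, which were set up precisely with this kind of application in mind.

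First I would fix the data needed to feed Proposition~\ref{countable-to-one}. Since $(X,\sigma)$ is an infinite transitive subshift it is uncountable (if one wants a justification from the excerpt, Theorem~\ref{MixSynchro} itself already produces a dense Mycielski, hence uncountable, subset of $X$), and as $\pi$ is countable-to-one, $Y=\pi(X)$ is uncountable as well; in particular we may pick $z_1,z_2\in X$ with $\pi(z_1)\ne\pi(z_2)$. Set $\epsilon=d(\pi(z_1),\pi(z_2))/3$ and, by uniform continuity of $\pi$, choose $\delta>0$ so that $d(a,b)<\delta$ implies $d(\pi(a),\pi(b))<\epsilon$ for all $a,b\in X$.

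The main step is then to apply Theorem~\ref{MixSynchro} to $(X,\sigma)$ with the distinguished points $z=z_1$, $z'=z_2$ and this $\delta$. It yields a dense Mycielski syndetically scrambled set $S\subset X$ for $\sigma$ which is $\sigma^k$-invariant for some $k\in\mathbb N$ and has the property that for any two distinct $x,x'\in S$ there is an infinite set $M\subset\mathbb N$ with $d(z_1,\sigma^n(x))<\delta$ and $d(z_2,\sigma^n(x'))<\delta$ for every $n\in M$. This set $S$ satisfies exactly the hypotheses of Proposition~\ref{countable-to-one} (taking $f=\sigma$, the countable-to-one factor map $\pi$, the points $z_1,z_2$, and the constants $\epsilon,\delta$ chosen above). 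Hence $g$ has a dense Mycielski syndetically $\epsilon$-scrambled set $T\subset\pi(S)$; and since $S$ is $\sigma^k$-invariant, the last assertion of Proposition~\ref{countable-to-one} lets us take $T$ to be $g^k$-invariant, which is what the theorem asserts.

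Because the statement is really an assembly of two earlier results, I do not expect a substantive obstacle. The only point that needs a word of care is the very first one — that $Y$ has at least two points, so that $z_1,z_2$ with distinct $\pi$-images exist; this is immediate from the uncountability of $X$ together with the countable-to-one hypothesis on $\pi$.
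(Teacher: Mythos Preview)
Your proposal is correct and follows essentially the same route as the paper: pick $z_1,z_2\in X$ with distinct $\pi$-images (using that $X$ is uncountable and $\pi$ is countable-to-one), set $\eps=d(\pi(z_1),\pi(z_2))/3$, then feed Theorem~\ref{MixSynchro} into Proposition~\ref{countable-to-one}. The paper's proof is more terse but identical in substance, including the invocation of the $g^k$-invariance clause of Proposition~\ref{countable-to-one}.
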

\begin{proof}
Note that $X$ is an uncountable perfect set by hypothesis. Since $\pi$ is countable-to-one, we can choose two points $z_1,z_2\in X$ such that $\pi(z_1)\ne \pi(z_2)$. Put $\eps=d(\pi(z_1),\pi(z_2))/3$. Now apply Theorem \ref{MixSynchro} and Proposition \ref{countable-to-one}.
\end{proof}

\subsection{Coded subshifts}

As we mentioned earlier, coded subshifts form the most general class of transitive subshifts considered so far. Unfortunately, the description of coded subshift has a few limitations when compared to synchronized subshifts. The following example highlights the disadvantages. Namely, it may happen that the SFT's approximating a mixing coded subshift $X_W$ may not mixing.

\begin{exmp}\label{ex:coded_mix}
There exists a sequence of words $\mathcal{B}=\set{B_1,B_2,\ldots}$ such that $B_1,\ldots, B_n$ is a circular code for any $n>1$, and if we denote by $X_n$ the coded system defined by these words (which is a transitive SFT), then $\bigcup_{i=1}^\infty X_i$ is dense in $\Sigma_2$ while $\sigma|_{X_n}$ is not mixing for any $n\in \N$.
\end{exmp}
\begin{proof}
Let $u(1),u(2),\ldots$ be a listing of all nonempty words of even length over $\set{0,1}$ so that $|u(n)|\le 2n$ and put $w(n)=1u(n) 10^{4n}$

By the above construction:
\begin{enumerate}[(i)]
\item if we consider any word $w$ which is a concatenation of words $w(1),\ldots, w(n)$ then it cannot contain $0^{4(n+1)}$ as a subword, since each of the words $w(i)$ starts from symbol $1$ and all of them have length bounded from above by $4n+2n+2<4(n+1)$,
\item the position of $0^{4n}$ in $w(n)$ is uniquely determined.
\end{enumerate}
Therefore, if we consider any bi-infinite concatenation of words $w(1),\ldots, w(n)$ then positions of words $w(n)$ in this bi-infinite sequence are uniquely determined by blocks of the form $0^{4n}$. When we recognize positions of words $w(n)$ then on remaining ``free'' positions we can uniquely determine places occupied by words $w(n-1)$ etc.
In other words, the finite sequence $w(1),\ldots, w(n)$ is a circular code for any fixed $n$.

Hence the coded system $X_n$ generated by $w(1),\ldots, w(n)$ is a transitive SFT, and
$$
L(\overline{\bigcup_{n=1}^\infty X_n})=\bigcup_{n=1}^\infty L(X_n)=\set{0,1}^+
$$
thus $X_W=\Sigma_2$, where $W=\set{w(n):n\in \N}$.

But none of the subshifts $X_n$ can be mixing, since if $x\in X_n$ contains $0^{4n}$ as a subword starting on positions say $i<j$ then $j-i=2 m$ for some integer $m$. Simply $0^{4n}$ can appear only in words $w(n)$ (and the position of this block within $w(n)$ is unique) and between them in $x$ we can have only a concatenation of words $w(1),\ldots, w(n)$ which by definition has always even length.
\end{proof}

Observe that if $X,Y$ are finite and transitive subshifts and $X\subset Y$ then $X=Y$.
Thus if $X_W$ is an infinite coded subshift, then there exists an infinite and transitive SFT such that $Y\subset X_W$. This by Corollary~\ref{cor:inf_trans_SFT}, immediately leads to the following.

\begin{cor}
Let $(X,\sigma)$ be an infinite coded subshift. Then for some $k\in \N$ there exists a $\sigma^k$-invariant Mycielski syndetically scrambled set for $(X,\sigma)$.
\end{cor}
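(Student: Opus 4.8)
The statement to prove is:

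\begin{cor}
Let $(X,\sigma)$ be an infinite coded subshift. Then for some $k\in \N$ there exists a $\sigma^k$-invariant Mycielski syndetically scrambled set for $(X,\sigma)$.
\end{cor}

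The plan is to leverage the two observations made immediately before the corollary in the excerpt, together with Corollary~\ref{cor:inf_trans_SFT}. First I would recall that by Krieger's theorem (stated in the Preliminaries), a coded subshift $X_W$ can be written as $X_W=\overline{\bigcup_{i=1}^\infty X_i}$ for an increasing sequence $X_1\subset X_2\subset\cdots$ of transitive SFTs. The key point is that since $X=X_W$ is infinite, at least one of the $X_i$ must be infinite: indeed, if every $X_i$ were finite, then each would be a finite transitive subshift, hence (being transitive and finite) a single periodic orbit, and the increasing union $\bigcup_i X_i$ would then be a finite or countable set of periodic orbits; more carefully, using the observation in the excerpt that a finite transitive subshift contained in another transitive subshift forces equality, the chain $X_1\subset X_2\subset\cdots$ would stabilize at a finite set, so its closure would be finite, contradicting that $X$ is infinite. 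Hence there is some $i_0$ with $Y:=X_{i_0}$ an infinite transitive SFT, and $Y\subset X$.

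Next I would apply Corollary~\ref{cor:inf_trans_SFT} to the infinite transitive SFT $(Y,\sigma)$: this yields $k\in\N$ and a $\sigma^k$-invariant Mycielski set $S\subset Y$ which is syndetically scrambled for $\sigma|_Y$. The final step is purely a matter of observing that these properties are inherited when we view $S$ as a subset of the larger subshift $X$: the metric on $X$ restricted to $Y$ agrees with the metric on $Y$ (both are the standard symbolic metric), so $S$ remains syndetically scrambled for $\sigma|_X$; $S$ is still Mycielski (a countable union of Cantor sets, a topological property independent of the ambient space); and $\sigma^k(S)\subset S$ continues to hold since $\sigma|_X$ restricted to $Y$ is just $\sigma|_Y$. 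Therefore $S$ is the desired $\sigma^k$-invariant Mycielski syndetically scrambled set for $(X,\sigma)$.

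I do not expect any serious obstacle here, since all the real work has already been done in Corollary~\ref{cor:inf_trans_SFT} (which in turn rests on Theorem~\ref{SFTperiod}). The only point requiring a sentence of justification is the extraction of an infinite transitive SFT sitting inside $X$; the excerpt essentially hands this to us via the remark that "if $X_W$ is an infinite coded subshift, then there exists an infinite and transitive SFT $Y$ such that $Y\subset X_W$," which combines Krieger's approximation with the observation that a finite transitive subshift properly contained in a transitive subshift is impossible. So the proof is genuinely a two-line deduction: invoke the existence of the infinite transitive SFT $Y\subset X$, then quote Corollary~\ref{cor:inf_trans_SFT} for $Y$ and note that all relevant properties transfer verbatim to $X$.
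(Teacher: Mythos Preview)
Your proposal is correct and follows essentially the same approach as the paper: extract an infinite transitive SFT $Y\subset X$ from the Krieger approximation (using the observation that finite transitive subshifts with $X_i\subset X_{i+1}$ must coincide, so the chain would stabilize if all were finite), then apply Corollary~\ref{cor:inf_trans_SFT} and note that the resulting set retains all its properties in the ambient $X$. The paper's own proof is exactly this two-line deduction, stated just before the corollary.
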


In view of Example~\ref{ex:coded_mix} we see that even if $X_W$ is mixing, then the circular codes coming from $W$ can define nonmixing shifts of finite type.
Still, we strongly believe (however do not know how to prove it formally) that there exists an alternative choice of list of words $\mathcal{B}$ such that $X_\mathcal{B}=X_W$ and the assumptions of Proposition~\ref{prop:mixing_coded} are satisfied.

\begin{prop}\label{prop:mixing_coded}
Let $(X,\sigma)$ be an infinite mixing coded subshift and suppose that there are mixing SFT $X_1\subset X_2\subset\cdots \subset X$ such that $X=\overline{\bigcup_{i=1}^\infty} X_i$. Then there is a dense Mycielski syndetically scrambled set for $(X,\sigma)$.
\end{prop}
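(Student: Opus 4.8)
The plan is to deduce the statement from Proposition~\ref{joinCantScra}: I will produce a dense Mycielski set $S\subset X$ with $S^2\subset\SProx(\sigma)$ and with densely many Cantor syndetically scrambled subsets. The Cantor subsets will come essentially for free from the approximating SFTs; arranging that $S$ be pairwise syndetically proximal is the real work, since $\SProx$ is not transitive, so a naive union of syndetically scrambled sets sitting in distinct $X_i$ need not have this property.

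First I would make a harmless reduction. A finite mixing subshift is a single fixed point, hence a nested union of finite mixing SFTs is again a single fixed point, incompatible with $X$ being infinite; so cofinitely many of the $X_i$ are infinite and, by nestedness, after dropping an initial segment and reindexing I may assume every $X_i$ is an infinite mixing SFT. Fix a periodic point $x\in X_1$, of period $p$; then $x\in X_i$ for every $i$. Applying Theorem~\ref{SFTperiod} to each $X_i$ with this $x$ yields a dense (in $X_i$) Mycielski syndetically scrambled set $S_i\subset X_i$ with $x\in S_i$, and I set $S=\bigcup_i S_i$. Then $S$ is a dense Mycielski subset of $X$, since each $S_i$ is dense in $X_i$ and $\bigcup_i X_i$ is dense in $X$. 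Moreover, for any nonempty open $W\subset X$ some $X_i$ meets $W$, so $S_i\cap W$ is an uncountable Borel set and hence contains a Cantor set $C$; as $C\subset S_i\subset S$ and $S_i$ is syndetically scrambled, $C$ is a Cantor syndetically scrambled set, hence (as $X$ is a subshift) syndetically $1$-scrambled. Thus condition (ii) of Proposition~\ref{joinCantScra} holds with, say, $\eps=1/3$.

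The crux is condition (i), $S^2\subset\SProx(\sigma)$, and for this I would strengthen the conclusion of Theorem~\ref{SFTperiod} by inspecting its proof. There one substitutes $0\mapsto w^{q_i}$ and $1\mapsto v_i^{p}$ (with $x=w^\infty$, $|w|=p$) into a fixed $\sigma$-invariant syndetically scrambled subset $K'$ of $\Sigma_2$ (respectively $\Sigma_2^+$) with $0^\infty\in K'$. If one takes for $K'$ the explicit set used in the proof of Theorem~\ref{thm:full_shift} --- whose points have arbitrarily long runs of the zero symbol occurring with bounded gaps, the nonzero coordinates being confined to positions of the form $2^k$ --- then, after the substitution and the remaining bookkeeping, every point $y\in S_i$ turns out to be \emph{thickly} syndetically proximal to $x$: for every $\ell$ the set $\{n:y_{[n,n+\ell)}=x_{[n,n+\ell)}\}$ is thickly syndetic, not merely syndetic. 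Granting this, take distinct $y\in S_i$, $z\in S_j$. If $i=j$, then $(y,z)\in\SProx(\sigma)$ because $S_i$ is syndetically scrambled. If $i\neq j$, then for each $\ell$ the set on which $y$ agrees with $x$ on a window of length $\ell$ is thickly syndetic, the analogous set for $z$ is syndetic (as $z,x\in S_j$, which is syndetically scrambled), their intersection is syndetic, and on that intersection $y$ and $z$ agree; hence $(y,z)\in\SProx(\sigma)$. Therefore $S^2\subset\SProx(\sigma)$, and Proposition~\ref{joinCantScra} produces a dense Mycielski syndetically scrambled set for $(X,\sigma)$.

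I expect the main obstacle to be exactly the bookkeeping of the previous paragraph: checking that the construction behind Theorem~\ref{SFTperiod}, when fed with the sparse auxiliary set from Theorem~\ref{thm:full_shift}, yields points thickly syndetically proximal to the common periodic point $x$ (substitution by $w^{q_i}$ scales a thickly syndetic run structure to another one, and modifying finitely many initial coordinates or applying $\sigma^{jp}$ with $p\mid jp$ preserves it). The remaining ingredients --- the elementary fact that a thickly syndetic set meets a syndetic set in a syndetic set, and the reduction to infinite $X_i$ --- are routine. The hypothesis that the $X_i$ are \emph{mixing} SFTs, which by Example~\ref{ex:coded_mix} is genuinely stronger than requiring the approximating SFTs to be transitive, is used only inside Theorem~\ref{SFTperiod} (to furnish the words $v_i$ with $\alp(v_i)\ne\alp(w)$ and the $\sigma^p$-invariant scrambled sets $S_i$).
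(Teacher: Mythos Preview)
Your overall architecture is exactly the paper's: pick a periodic point $x\in X_1$, apply Theorem~\ref{SFTperiod} in each $X_i$ to get dense Mycielski syndetically scrambled $S_i\subset X_i$ with $x\in S_i$, set $S=\bigcup_i S_i$, check the hypotheses of Proposition~\ref{joinCantScra}, and conclude.

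The difference is that you mislocate the difficulty. Your assertion that ``$\SProx$ is not transitive'' is false: the syndetically proximal relation \emph{is} an equivalence relation (this is Clay's result, cited in the paper as \cite{Clay} and used verbatim in the proofs of Proposition~\ref{prop:1} and Theorem~\ref{thm:mixna}). The paper therefore dispatches what you call ``the crux'' in one line: since $x\in S_i$ for every $i$ and $S_i^2\subset\SProx(\sigma)$, transitivity through the common point $x$ gives $S^2\subset\SProx(\sigma)$ immediately. No inspection of the proofs of Theorems~\ref{SFTperiod} or~\ref{thm:full_shift} is needed.

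Your workaround is not wrong, merely redundant, and in fact it reproves a piece of Clay's theorem in the subshift setting. The ``thickly syndetically proximal'' upgrade you want holds automatically for \emph{any} syndetically proximal pair in a subshift: if $A_\ell=\{n:y_{[n,n+\ell)}=x_{[n,n+\ell)}\}$ is syndetic for every $\ell$, then $A_\ell\supset A_{\ell+m}+\{0,\dots,m\}$ for each $m$, so $A_\ell$ is thickly syndetic. Combined with your observation that thickly syndetic meets syndetic in a syndetic set, this is precisely the transitivity of $\SProx$. So the ``main obstacle'' you anticipate --- tracing through the explicit constructions to verify the thickly syndetic property --- evaporates once you realise $\SProx$ is an equivalence relation.
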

\begin{proof}
We may assume that $X_n$ is infinite for every $n\in \mathbb N$. Let $x\in X_1$ be a $\sigma$-periodic point. Then $x\in X_n$ for every $n\in \mathbb N$. By Theorem~\ref{SFTperiod}, for each $n\in \mathbb N$, there is a Mycielski syndetically scrambled set $S_n\subset X_n$ for $\sigma$ with $\overline{S_n}=X_n$ and $x\in S_n$. Put $S=\bigcup_{n=1}^\infty S_n$. Then $S$ is a dense Mycielski set in $X$. Moreover $S^2\subset \SProx(\sigma)$ since $x$ is a common point for all $S_n$'s and the syndetically proximal relation is an equivalence relation. Now consider a nonempty open set $U\subset X$. Then there is $n\in \mathbb N$ such that $X_n\cap U\ne \emptyset$ and hence $S_n\cap U\ne \emptyset$ because $\overline{S_n}=X_n$. An infinite mixing subshift cannot have isolated points and therefore $S_n\cap U$ is an uncountable Borel set. Hence there is a Cantor set $C\subset S_n\cap U$. Clearly $C$ is syndetically scrambled for $\sigma$ since $S_n$ is. Also recall that for a subshift, any scrambled set is $1$-scrambled. Thus the assumptions of Proposition~\ref{joinCantScra} are satisfied for the set $S$ with $\eps=1/3$. Hence there is a dense Mycielski syndetically scrambled set $T\subset X$ for $\sigma$ with $T\subset S$.
\end{proof}

\begin{prop}\label{prop:asc_prox}
Let $X$ be an infinite compact metric space, let $(X,f)$ be a dynamical system and assume that there exists a sequence of closed invariant sets
$\set{X_n}_{n=1}^\infty  \subset X$ such that $\overline{\bigcup_{n=1}^\infty X_n}=X$, $X_i\cap X_j\neq \emptyset$ for $i\neq j$ and $f|_{X_n}$ is a transitive proximal system for every $n$.
Then there exists a dense and $f$-invariant Mycielski syndetically scrambled set $S\subset X$ for $f$.
\end{prop}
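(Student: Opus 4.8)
The plan is to first extract a common fixed point of all the $X_n$ that makes syndetic proximality automatic on $\bigcup_n X_n$, and then to build the scrambled set piece by piece with the machinery of Section~\ref{sec:tools}.

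\emph{Step 1 (a common fixed point, and $\SProx=\Prox$ on each $X_n$).} Each $(X_n,f|_{X_n})$ has a unique minimal subset: if $M,M'$ were two of them and $(a,b)\in M\times M'$, then the $(f\times f)$-orbit closure of $(a,b)$ lies in the closed invariant set $M\times M'$ and, by proximality, meets the diagonal, so it contains some $(c,c)$ with $c\in M\cap M'$, forcing $M=M'$. This unique minimal set is a single fixed point: it is minimal with all pairs proximal, so it carries an invariant probability measure (Krylov--Bogolyubov) whose push-forwards along a subnet of $\{f^m\}$ converging to the constant map onto the unique minimal point force the measure to be a point mass, and an invariant point mass lives on a fixed point. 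Since $X_i\cap X_j$ is nonempty, closed and invariant, it contains the minimal set of $X_i$ and that of $X_j$, so these coincide; write $p$ for the common fixed point, so $p\in\bigcap_n X_n$ and $f(p)=p$. Now $\Prox(f|_{X_n})=X_n\times X_n$ is closed and $f\times f$-invariant, so the argument from the proof of the lemma preceding Corollary~\ref{prop:syprox} (a minimal subset of a relation contained in $\Prox$ must meet the diagonal) gives $\SProx(f|_{X_n})=X_n\times X_n$. As $\SProx(f)$ is an equivalence relation containing $(x,p)$ for every $x$ in every $X_n$, the $f$-invariant dense set $Y:=\bigcup_n X_n$ satisfies $Y\times Y\subseteq\SProx(f)$, and, $\SProx(f)$ being forward $f\times f$-invariant, also $(f^iy,f^jy')\in\SProx(f)$ for all $y,y'\in Y$ and $i,j\ge 0$. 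Hence for any $f$-invariant subset of $Y$, ``syndetically scrambled'' reduces to ``no two distinct points are asymptotic''.

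\emph{Step 2 (rich Cantor syndetically scrambled pieces).} Each infinite $X_n$ is transitive with a periodic (fixed) point, hence sensitive; it is also perfect and $f$ is surjective on it. So $\limsup_m\diam(f^m(U))\ge c_n>0$ for every nonempty relatively open $U\subseteq X_n$, where $c_n$ is a sensitivity constant. Fix a countable base $\{W_j\}$ of $X$ and, for each $j$, an infinite $X_{n_j}$ meeting $W_j$. Applying Lemma~\ref{lem:aux1} inside the perfect set $\overline{X_{n_j}\cap W_j}$ and iterating it in a Cantor--scheme fashion produces a Cantor set $C_j\subseteq Y\cap W_j$ with $\limsup_m d(f^mx,f^my)\ge c_{n_j}$ for all distinct $x,y\in C_j$; by Step~1 each $C_j$ is syndetically $c_{n_j}$-scrambled.

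\emph{Step 3 (gluing and $f$-invariance).} Pick a dense Mycielski $S\subseteq Y$ containing all the $C_j$. Then conditions (i) and (ii) of Proposition~\ref{joinCantScra} hold (using the adaptation of that proposition to the non-uniform constants $c_{n_j}$, or its plain ``scrambled'' variant from the remark before Lemma~\ref{cdense}), yielding a dense Mycielski syndetically scrambled $S_0\subseteq Y$. To make it $f$-invariant, I would imitate Lemma~\ref{InvMyc}: build, by a careful Cantor--scheme construction inside one infinite $X_{n_0}$ (arranging future separation not only for distinct points but for distinct points lying in distinct forward orbits of the Cantor set, with syndetic proximality free because $Y\times Y\subseteq\SProx(f)$), an $f$-invariant syndetically scrambled set containing a Cantor set with the ``spreading'' property of Lemma~\ref{InvMyc}, taking the building blocks as transversals of the relation ``$f^nx=f^my$ for some $n,m$'' (classes countable since powers of $f$ are injective on scrambled sets; use Theorems 5.13.9 and 3.2.7 of \cite{Sriv}); Lemma~\ref{InvMyc} then upgrades this to a dense Mycielski $f$-invariant syndetically scrambled set, finishing the proof.

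\emph{Main obstacle.} The hard part is Step~2 together with the invariance half of Step~3 in the general, non-symbolic setting: one must understand the asymptotic relation of the transitive proximal systems $X_n$ well enough to force separation in the future between distinct points lying in distinct forward orbits of the chosen Cantor set. For subshifts asymptotic pairs are simply eventually-equal pairs, which trivialises the transversal arguments; in general one must lean on sensitivity of these transitive-with-a-fixed-point systems and on Lemma~\ref{lem:aux1} to produce the needed separations at arbitrarily late times, and verifying that sensitivity (equivalently, controlling the asymptotic relation) is itself part of the work.
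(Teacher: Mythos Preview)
Your Step~1 is fine and matches the paper: a transitive proximal system has a unique minimal set which is a fixed point, the pairwise intersections force a common fixed point $p\in\bigcap_n X_n$, and closedness of $\Prox(f|_{X_n})=X_n^2$ yields $\SProx(f|_{X_n})=X_n^2$, so syndetic proximality is automatic on $\bigcup_n X_n$.

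The genuine gap is in Steps~2--3. Two problems:

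\emph{(a) Sensitivity is not justified.} ``Transitive with a fixed point, hence sensitive'' is not a theorem you can quote; the Banks--Brooks--Cairns--Davis--Stacey argument needs a \emph{dense} set of periodic points, and Auslander--Yorke only gives the dichotomy sensitive/almost-equicontinuous. You would have to rule out almost equicontinuity for an infinite perfect transitive proximal system, and you do not do this. Without sensitivity, Lemma~\ref{lem:aux1} is unavailable and your Cantor pieces $C_j$ cannot be built.

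\emph{(b) Invariance.} Your plan to invoke Lemma~\ref{InvMyc} fails because its hypothesis ``for every open $U$ there is $n$ with $f^n(K)\subset f^n(U)$'' is an exactness-type condition which is simply not present here. Your fallback (``arrange future separation for points in distinct forward orbits'') is exactly the obstacle you flag at the end, and you give no mechanism for it.

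The paper avoids both issues with a single observation you are missing: for any system, $\Rec(f\times f)\cap\Asy(f)\subset\Delta$ (if $(f^{n_k}x,f^{n_k}y)\to(x,y)$ and $d(f^{n_k}x,f^{n_k}y)\to 0$, then $x=y$). Since $(X_n,f)$ is transitive, $\Rec(f\times f)\cap X_n^2$ is residual in $X_n^2$ (a standard fact, cited from \cite{Akin}), so Kuratowski--Mycielski gives a Cantor set $C\subset\Tran(f|_{X_n})$ with $C\times C\subset\Rec(f\times f)$. Every off-diagonal pair in $C$ is then automatically non-asymptotic, and---this is the key for invariance---if $(u,w)\in\Rec(f\times f)$ then $(f^iu,f^jw)\in\Rec(f\times f)$ for all $i,j\ge 0$, so $M_n:=\bigcup_{i\ge 1}f^i(C)$ is an $f$-invariant syndetically scrambled Mycielski set dense in $X_n$. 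For pairs $x\in M_n$, $y\in M_m$ with $X_n\ne X_m$, non-asymptoticity comes from the trivial remark that $y$ has dense orbit in $X_m$ while $x$ stays in $X_n$, and one of $X_n\setminus X_m$, $X_m\setminus X_n$ is nonempty. Taking $S=\bigcup_n M_n\cup\{p\}$ finishes the proof without ever touching sensitivity, Lemma~\ref{lem:aux1}, Proposition~\ref{joinCantScra}, or Lemma~\ref{InvMyc}.
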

\begin{proof}
First of all, if each $X_n$ is finite then $X$ is a singleton, thus without loss of generality we may assume that there is a nonempty set $\mathbb{I}\subset \N$
such that $X_n$ is infinite and $X_i\neq X_j$ for every $n,i,j\in \mathbb{I}$. Note that each $X_n$, $n\in \mathbb{I}$ is perfect, therefore $X$ is perfect (and $\overline{\bigcup_{n\in \mathbb{I}}X_n}=X$) or singleton (and $\mathbb{I}=\emptyset$). It is also easily seen that there is a fixed point $p\in X$ such that $p\in \bigcap_{n=1}^\infty X_n$.

Fix any $n\in \mathbb{I}$. Since $f|_{X_n}$ is transitive, it is well known (e.g. see \cite{Akin}) and not hard to verify that $\Rec(f\times f)\cap X_n\times X_n$ is residual in $X_n\times X_n$.
This implies by Kuratowski-Mycielski theorem that there is a Cantor set $C\subset \Tran(f|_{X_n})$ such that $C\times C\subset \Rec(f\times f)$.
Denote $M_n=\bigcup_{i=1}^\infty f^i(C)$. Note that if $x,y\in C$ are distinct then $f^i(x)\neq f^i(y)$ for every $i$, as otherwise $(x,y)\not\in \Rec(f\times f)$. This shows that $M_n$ is a Mycielski set, and also $\overline{M_n}=X_n$ since $M_n\cap \Tran(f|_{X_n})\neq \emptyset$.

Denote $S=\bigcup_{n\in \mathbb{I}}M_n\cup \set{p}$ and observe that it is possible to construct a Cantor subset of $S$ containing $p$. Therefore $S$ is a dense Mycielski set and also $f(S)\subset S$.

Next fix any distinct $x,y\in S$. If $x,y\in M_n\cup \set{p}$ for some $n$ then $(x,y)\notin \Asy(f)$. Simply $\Rec(f\times f)\cap \Asy(f)\subset \Delta$ and if $(u,w)\in \Rec(f\times f)$ then also $(f^i(u),f^j(w))\in \Rec(f\times f)$ for any $i,j\geq 0$.

There is also a second possibility that there are $n\neq m$ such that $x\in M_n$ and $y\in M_m$. Since $X_n\neq X_m$ we may assume that there is an open set $U\subset X_m$ such that $\overline{U}\cap X_n=\emptyset$. Orbit of $y$ is dense in $X_m$ so it must visit $U$ infinite number of times. This proves that $(x,y)\notin \Asy(f)$ also in this case, hence $S$ is a syndetically scrambled set. The proof is completed.
\end{proof}

Fix $P\subset \N$ and let $X_P$ be the subset of $\Sigma_2$ consisting of all sequences $x$ such that if $x_i=x_j=1$ for some $i,j$ then $|i-j|\in P\cup \set{0}$. It is easy to see that $X_P$ is a subshift, i.e. it is closed and $\sigma$-invariant. We will write $\sigma_P$ for $\sigma$ restricted to $X_P$, and call the dynamical system given by $\sigma_P\colon X_P\to X_P$ a \emph{spacing} shift. One-sided spacing shifts are defined similarly. The class of spacing shifts was introduced by Lau and Zame in \cite{LZ}, and for a detailed exposition of their properties we refer to \cite{Spacing}. For the purposes of this article it is enough to recall that (i) $\sigma_P$ is weakly mixing iff $P$ is thick, and (ii) if $\N\setminus P$ is thick, then $\sigma_P$ is proximal.

\begin{cor}
Let $(X_P, \sigma_P)$ be a weakly mixing spacing shift. Then there exists a dense and invariant Mycielski syndetically scrambled set $S\subset X_P$.
\end{cor}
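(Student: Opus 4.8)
The plan is to deduce the statement straight from Proposition~\ref{prop:asc_prox}; the only work is to produce a suitable family of closed invariant subsets of $X_P$. Since $(X_P,\sigma_P)$ is weakly mixing, the set $P$ is thick, by~(i). The subsets we use will be spacing subshifts $X_{Q_n}$ with $Q_n\subseteq P$: then $X_{Q_n}\subseteq X_P$ automatically, and the point $0^\infty$ is a fixed point lying in each $X_{Q_n}$, which takes care of the requirement $X_i\cap X_j\neq\emptyset$ and of the common fixed point appearing in the proof of Proposition~\ref{prop:asc_prox}.

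I would first fix a single set $R\subseteq P$ such that both $R$ and $\N\setminus R$ are thick. This is possible because a thick set contains, for every $n$, an interval of length $n$ lying beyond any prescribed position; hence $P$ contains a sequence of pairwise disjoint intervals $I_1,I_2,\dots$ with $|I_k|\to\infty$ and with increasing left endpoints, and it suffices to let $R$ be the union of the left halves of the $I_k$: then $R$ is thick, while $\N\setminus R$ contains the right halves of the $I_k$ and is thick as well. Then I would enumerate $L(X_P)=\set{u^{(1)},u^{(2)},\dots}$ and, for each $i$, let $\delta^{(i)}$ be the (finite) set of differences $|a-b|$ of positions $a,b$ carrying the symbol $1$ inside $u^{(i)}$; note $\delta^{(i)}\subseteq P\cup\set{0}$ since $u^{(i)}\in L(X_P)$. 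Put $Q_i:=R\cup\delta^{(i)}$. Each $Q_i$ is contained in $P$ and is thick (it contains $R$), and $\N\setminus Q_i=(\N\setminus R)\setminus\delta^{(i)}$ is still thick, since removing a finite set from a thick set leaves a thick set.

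It then remains to check the hypotheses of Proposition~\ref{prop:asc_prox} for the family $\set{X_{Q_i}}_{i\in\N}$. The system $\sigma_{Q_i}$ on $X_{Q_i}$ is transitive: $Q_i$ is thick, so $\sigma_{Q_i}$ is weakly mixing by~(i), hence transitive. It is proximal: $\N\setminus Q_i$ is thick, so $\sigma_{Q_i}$ is proximal by~(ii). The union $\bigcup_i X_{Q_i}$ is dense in $X_P$: given a basic cylinder $C_{X_P}[u^{(i)}]$, the word $u^{(i)}$ has all its relevant position-differences in $\delta^{(i)}\subseteq Q_i$, so $u^{(i)}\in L(X_{Q_i})$ and the cylinder meets $X_{Q_i}$. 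Finally $X_P$ is infinite (for each $q\in P$ the sequence whose only two symbols $1$ sit at positions $0$ and $q$ belongs to $X_P$, and $P$ is infinite), and for the same reason each $X_{Q_i}$ is infinite. Proposition~\ref{prop:asc_prox} now provides a dense $\sigma_P$-invariant Mycielski syndetically scrambled set, which is what we wanted.

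I do not expect any real difficulty: the whole proof is bookkeeping built on the two quoted structural facts about spacing shifts, the one genuine idea being that the finite ``defect set'' $\delta^{(i)}$ of a word can be adjoined to a thick-with-thick-complement subset $R\subseteq P$ without destroying the thickness either of $Q_i$ or of its complement. The only step that should be written out explicitly is the elementary observation that a thick set stays thick after deleting finitely many elements.
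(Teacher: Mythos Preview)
Your proof is correct and follows essentially the same route as the paper: both start from a thick $R\subset P$ with thick complement and then enlarge it by finite subsets of $P$ so that the resulting spacing subshifts are transitive, proximal, and jointly dense, at which point Proposition~\ref{prop:asc_prox} applies. The only cosmetic difference is that the paper adds the initial segments $[0,n]\cap P$ (obtaining a nested sequence $Q(n)=Q\cup([0,n]\cap P)$ with $\bigcup_n Q(n)=P$), whereas you add the difference set $\delta^{(i)}$ of the $i$-th word of $L(X_P)$; both choices are finite subsets of $P$ and both guarantee that every word of $L(X_P)$ eventually appears in some $X_{Q_i}$.
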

\begin{proof}
Since $P$ is thick, there exists a thick set $Q\subset P$ such that $\N\setminus Q$ is also thick. For $n\in \N$ let $Q(n)=Q\cup ([0,n]\cap P)$.
Obviously $\bigcup_{n=1}^\infty Q(n)=P$ and so the sequence $X_n=\Sigma_{Q(n)}$ satisfies assumptions of Proposition~\ref{prop:asc_prox}, which ends the proof.
\end{proof}

\begin{rem}
It is very easy to see that $X_P$ has a scrambled pair iff it is uncountable. It is also very easy to change a point $x\in X_P$ with infinitely many occurrences of symbol $1$ into a Cantor syndetically scrambled set (simply, it is enough to change appropriate symbols $1$ to $0$). Therefore, $X_P$ is uncountable iff it has a Cantor syndetically scrambled set.
\end{rem}

Notice that full shift (one or two-sided) is the spacing shift defined by $P=\N$. Then we get another proof of Theorem~\ref{thm:full_shift} in the restricted case of $m=2$.

\subsection{Application to topologically Anosov maps}

Recall that a surjective continuous map  $f\colon X\to X$ is \emph{c-expansive (with expansive constant $\beta>0$)} if for any two sequences
$\set{x_i}_{i\in \Z}, \set{y_i}_{i\in \Z}\subset Y$ such that $f(x_i)=x_{i+1}$, $f(y_i)=y_{i+1}$ and $x_0\neq y_0$ there is $n\in \Z$ such that $d(x_n,y_n)>\beta$. A sequence $\set{x_i}_{i=0}^\infty$ is a $\delta$-pseudo-orbit if $d(f(x_i),x_{i+1})<\delta$ for every $i$. We say that $\delta$-pseudo-orbit $\set{x_i}_{i=0}^\infty$ is $\eps$-traced by a point $z$ if $d(f^i(z),x_i)<\eps$ for every $i\in \N_0$. Recall that $(X,f)$ has \emph{shadowing property} if for every $\eps>0$ there is $\delta>0$ such that any $\delta$-pseudo-orbit is $\eps$-traced by some point. We say that continuous surjection of a compact metric space is a \emph{topologically Anosov map (TA-map for short)} if it is c-expansive and has shadowing. If additionally $f$ is invertible then we call it a \emph{TA-homeomorphism}.

The following fact was first proved by Bowen (see \cite[Theorems~4.3.5~and~4.3.6]{AH}).

\begin{thm}\label{Bowen}
Let $g\colon Y\to Y$ be a topologically mixing TA-homeomorphism. Then there exist a two-sided mixing subshift of finite type $(X,\sigma)$, $t\in \mathbb N$, and a factor map $\pi \colon (X,\sigma) \to (Y,g)$ such that $\# \pi^{-1}(y)\le t$ for every $y\in Y$.
\end{thm}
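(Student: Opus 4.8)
The statement is a classical theorem of Bowen, so the plan is to reconstruct its proof by building a finite-to-one Markov coding; I indicate the main steps and refer to \cite{AH} for the technical details. Fix a c-expansive constant $\beta>0$ for $g$, choose $\eps>0$ with $2\eps<\beta$, and let $\delta>0$ be provided by the shadowing property for $\eps/2$; since $g$ is a homeomorphism, a routine compactness argument upgrades this to tracing of \emph{two-sided} $\delta$-pseudo-orbits to within $\eps$. Two soft consequences of c-expansiveness together with shadowing will be used throughout: \emph{uniform expansivity} (for every $\gamma>0$ there is $M\in\N$ such that $d(g^n(z),g^n(z'))\le\beta$ for all $|n|\le M$ implies $d(z,z')<\gamma$), and a \emph{local product structure} -- for $x,y$ sufficiently close there is a unique point $[x,y]$ with $d(g^n(x),g^n([x,y]))\le\eps$ for $n\ge 0$ and $d(g^n(y),g^n([x,y]))\le\eps$ for $n\le 0$ (existence by shadowing a concatenated pseudo-orbit, uniqueness by expansivity), depending continuously on $(x,y)$ where defined.

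The core step is to construct a Markov partition $\mathcal R=\{R_1,\dots,R_N\}$ of $Y$: following Bowen, one starts from a sufficiently fine finite cover, uses the bracket $[\cdot,\cdot]$ to replace its pieces by genuine rectangles (sets closed under $[\cdot,\cdot]$), and then performs the standard surgery so that the $R_i$ are closed with pairwise disjoint nonempty interiors, $\bigcup_i R_i=Y$, $R_i=\overline{\Int R_i}$, and the boundary-invariance conditions $g(\partial^s R_i)\subseteq\bigcup_j\partial^s R_j$ and $g^{-1}(\partial^u R_i)\subseteq\bigcup_j\partial^u R_j$ hold. Let $A$ be the transition matrix with $A_{ij}=1$ iff $\Int R_i\cap g^{-1}(\Int R_j)\ne\emptyset$, put $X=X_A$ (the two-sided SFT), and define $\pi\colon X\to Y$ by letting $\pi(x)$ be the unique point of $\bigcap_{n\in\Z}g^{-n}(R_{x_n})$, which is nonempty by the Markov property and compactness, and a single point by uniform expansivity. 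Then $\pi\circ\sigma=g\circ\pi$ by construction; $\pi$ is continuous because, by uniform expansivity, a cylinder fixing coordinates $-M,\dots,M$ is mapped into a set of diameter $<\gamma$; and $\pi$ is surjective since $\pi(X)$ is closed and contains every $y$ whose orbit avoids the meager set $\bigcup_i\partial R_i$ (for such $y$ one has $g^n(y)\in\Int R_{x_n}$ for a unique $x_n$, the resulting sequence lies in $X_A$, and $\pi(x)=y$), which is dense in $Y$.

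It remains to check the uniform finite-to-one bound and mixing. If $\pi(x)=\pi(x')=y$ and $x_n\ne x'_n$ for some $n$, then $g^n(y)\in R_{x_n}\cap R_{x'_n}$, which by disjointness of interiors lies in $\partial R_{x_n}=\partial^s R_{x_n}\cup\partial^u R_{x_n}$; the boundary-invariance conditions then force the two preimages to remain on stable boundaries in forward time and on unstable boundaries in backward time, and since each point lies in at most $N$ rectangles, Bowen's analysis of how rectangle boundaries intersect yields a uniform bound $\#\pi^{-1}(y)\le t$ with $t=t(\mathcal R)$. For mixing of $(X,\sigma)$ we use that $g$ is topologically mixing: given $i,j$, the sets $\Int R_i$ and $\Int R_j$ are nonempty open, so $g^n(\Int R_i)\cap\Int R_j\ne\emptyset$ for all large $n$; choosing a point of this intersection whose whole orbit avoids $\bigcup_k\partial R_k$ (possible since that set is meager) yields an admissible $A$-path of length $n$ from $i$ to $j$, so $A$ is primitive and $X_A$ is a mixing SFT.

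The main obstacle is the construction of the Markov partition together with the uniform finite-to-one estimate: converting the soft hypotheses ``c-expansive $+$ shadowing'' into a finite family of rectangles satisfying the exact boundary-invariance conditions requires Bowen's delicate combinatorial and topological arguments (carried out in detail in \cite{AH}), and it is precisely the local product structure of the rectangles that keeps $\#\pi^{-1}(y)$ uniformly bounded. Once the partition is in hand, equivariance, continuity, surjectivity and the transfer of mixing are all routine.
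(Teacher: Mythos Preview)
Your proposal is correct and aligned with the paper: the paper does not prove this theorem at all but simply attributes it to Bowen and cites \cite[Theorems~4.3.5 and 4.3.6]{AH}, so your sketch of the classical Markov-partition argument is exactly the content being invoked. You actually go further than the paper by outlining the construction, the finite-to-one bound, and the transfer of mixing; these steps are standard and your account is faithful to Bowen's approach as presented in \cite{AH}.
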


So it is natural to expect that we may be able to transfer syndetically scrambled set to a TA-map via the factor map. Indeed, by the theory we have developed in the previous sections, we can do it.

\begin{cor}\label{cor:TA-map}
Let $g\colon Y \to Y$ be a mixing TA-map. Then there are $\eps>0$, $k\in \mathbb N$, and a dense Mycielski set $T\subset Y$ such that $T$ is a $g^k$-invariant syndetically $\eps$-scrambled set for $g$.
\end{cor}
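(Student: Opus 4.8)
The statement concerns a mixing TA-map $g\colon Y\to Y$, which need not be invertible, whereas Theorem~\ref{Bowen} as quoted applies to mixing TA-\emph{homeomorphisms}. So the first step will be to reduce to (or suitably invoke) a symbolic cover. The standard way around the non-invertibility is to pass to the natural extension: let $(\hat Y,\hat g)$ be the inverse limit of $(Y,g)$, which is a TA-homeomorphism, mixing since $g$ is mixing, with the canonical projection $\rho\colon (\hat Y,\hat g)\to (Y,g)$. Applying Theorem~\ref{Bowen} to $\hat g$ gives a two-sided mixing SFT $(X,\sigma)$, an integer $t$, and a factor map $p\colon (X,\sigma)\to(\hat Y,\hat g)$ with $\#p^{-1}(\hat y)\le t$ for all $\hat y$. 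Composing, $\pi:=\rho\circ p\colon (X,\sigma)\to(Y,g)$ is a factor map; its fibres are at most countable because $\rho$ is at most countable-to-one for an expansive $g$ (pre-images of a point in the natural extension correspond to backward branches, and expansiveness bounds how many orbits can stay close). Thus I obtain a \emph{countable-to-one} factor map $\pi\colon (X,\sigma)\to(Y,g)$ from an infinite two-sided mixing SFT. (Alternatively, if one prefers to avoid the natural extension, one can cite the known fact that a mixing TA-map is itself a countable-to-one factor of a mixing SFT; the excerpt's tools only require countable-to-one.)

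Since $X$ is an infinite mixing SFT, it is in particular an infinite mixing synchronizing subshift (every transitive SFT is synchronizing, as noted in the Preliminaries). Hence Theorem~\ref{Synchro-factor} applies directly: given an infinite one- or two-sided mixing synchronizing subshift $(X,\sigma)$, a dynamical system $(Y,g)$, and a countable-to-one factor map $\pi\colon(X,\sigma)\to(Y,g)$, there is a dense Mycielski syndetically $\eps$-scrambled set $T\subset Y$ which is $g^k$-invariant for some $k\in\mathbb N$. This is exactly the desired conclusion, so the proof is essentially an application of Theorem~\ref{Synchro-factor} once the symbolic cover is in place.

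The main obstacle is the first paragraph: verifying that one genuinely has a \emph{countable-to-one} factor map from an infinite mixing SFT onto the (possibly non-invertible) $Y$. Two points need care. First, that the natural extension $\hat g$ of a mixing TA-map is again a mixing TA-homeomorphism --- shadowing and c-expansiveness pass to inverse limits of TA-maps (this is classical, e.g. in \cite{AH}), and mixing is inherited because $\rho$ is a factor map onto a mixing system while the fibre structure is compatible. Second, that $\rho\colon\hat Y\to Y$ is countable-to-one: if $\hat y,\hat y'$ project to the same $y$ but differ, they differ in some backward coordinate, giving two distinct $\hat g^{-1}$-orbit segments shadowing the same point; c-expansiveness of $\hat g$ then forces these to separate, and a compactness/counting argument bounds the fibre by a countable (indeed, in the expansive case, finite-branching) set. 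I would also double-check that $X$ can be taken infinite: since $Y$ is infinite (a mixing system on more than one point is infinite) and $\pi$ is countable-to-one, $X$ must be uncountable, hence an infinite SFT, so Theorem~\ref{Synchro-factor} is applicable.

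If one wishes to keep the argument short, the cleanest route is simply: by the version of Bowen's theorem for mixing TA-maps (combining Theorem~\ref{Bowen} with the natural-extension reduction, or citing it directly), there is an infinite two-sided mixing SFT $(X,\sigma)$ and a countable-to-one factor map $\pi\colon(X,\sigma)\to(Y,g)$; now invoke Theorem~\ref{Synchro-factor} to conclude.
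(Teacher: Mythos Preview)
Your reduction for the TA-homeomorphism case is fine and matches the paper: Bowen's theorem gives a finite-to-one factor from a mixing SFT, which is synchronizing, and then Theorem~\ref{Synchro-factor} applies directly.

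The genuine gap is in the non-invertible case. Your argument hinges on the claim that the natural projection $\rho\colon \hat Y\to Y$ from the inverse limit is countable-to-one, so that the composite $\pi=\rho\circ p$ is countable-to-one and Theorem~\ref{Synchro-factor} can be invoked. This claim is false, and c-expansiveness does not rescue it. Take $g=\sigma$ on $\Sigma_2^+$: this is a mixing TA-map (positively expansive with shadowing), its inverse limit is the two-sided shift $\Sigma_2$, and the fibre of $\rho$ over any one-sided sequence is the set of all possible pasts, i.e.\ a copy of $\{0,1\}^{\mathbb N}$, which is uncountable. The definition of c-expansiveness only separates bi-infinite orbits that differ at time $0$; it says nothing about the cardinality of backward branches through a fixed point $y_0$, so your ``compactness/counting argument'' cannot go through.

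The paper avoids this trap by \emph{not} composing the factor maps. It first obtains a dense Mycielski $\sigma_g^k$-invariant syndetically scrambled set $D$ in the inverse limit $\mathbb X_g$ (using the homeomorphism case), and then pushes $D$ down through the projection $\pi\colon \mathbb X_g\to Y$, $x\mapsto x_0$, by checking directly that this particular projection preserves syndetic scrambledness. The point is that if $x,y\in \mathbb X_g$ satisfy $x_0=y_0$, then $\sigma_g^n(x)$ and $\sigma_g^n(y)$ agree on coordinates $0,\ldots,n$, hence $(x,y)\in\Asy(\sigma_g)$; so on a scrambled set $D$ the projection $\pi$ is injective, and conversely if $(x_0,y_0)\in\Asy(g)$ then $(x,y)\in\Asy(\sigma_g)$. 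Thus $\pi(D)$ is still syndetically scrambled, dense, Mycielski, and $g^k$-invariant. In short: rather than forcing the projection into the countable-to-one framework of Theorem~\ref{Synchro-factor}, one exploits the specific structure of the inverse-limit projection.
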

\begin{proof}
If $g$ is TA-homeomorphism then the result follows by Theorems~\ref{Synchro-factor} and \ref{Bowen}.
If $g$ is only a TA-map, that is $g$ is non-invertible, consider the inverse limit
$$
\mathbb{X}_g=\lim_{\longleftarrow}(X,g)=\set{(x_0,x_1,\ldots)\; : \; g(x_{n+1})=x_n}\subset X^{\N_0}
$$
together with the induced shift map $\sigma_g\colon \mathbb{X}_g \to \mathbb{X}_g$, $\sigma_g(x)_i=g(x_{i})$.

Then it is well known that $\sigma_g$ is a homeomorphism ($\mathbb{X}_g$ is a compact metrizable space when endowed with Tychonoff topology) which has shadowing
if and only if so does $g$. Thus $\sigma_g$ is TA-homeomorphism, in particular has a dense Mycielski $\sigma_g^k$-invariant syndetically scrambled set $D$.
It is not hard to verify that the natural projection $\pi\colon \mathbb{X}_g \ni x \mapsto x_0 \in X$ preserves syndetically scrambled sets, that is $S=\pi(D)$ is also a syndetically scrambled set. By the definition we also have $g^k(S)\subset S$ which ends the proof.
\end{proof}

\begin{rem}
Let $u,w$ be two words of co-prime length forming a circular code. Then coded system $X$ generated by $u,w$ is a mixing SFT, in particular $\sigma|_X$ is a mixing TA-homeomorphism. But then
$$
k=\min\set{|u|,|w|}=\min\set{p\in \N\; : \; f^p(x)=x, x\in X}
$$
is the smallest number that can be provided by Corollary~\ref{cor:TA-map}, since if $f^p(S)\subset S$ and $S\subset X$ is a scrambled set, then $X$ contains also a point of period $p$.
\end{rem}

\section{Substitutions and minimal subshifts}\label{sec:minimal}

Till now we were studying subshifts with highly non-minimal dynamics. Now we are going to consider the case of minimal subshifts.
First note that by Corollary~\ref{cor:syprox} in many standard classes of minimal subshifts (e.g. Toeplitz flows) there is no difference between
proximal and syndetically proximal pairs.

Here we are going to consider subshifts arising from substitutions. While many of them (such as the Toeplitz flows) are almost 1-1 extensions of odometers, as we will see, still there is large class of substitutions which satisfy $\Prox(\sigma|_X)\neq \SProx(\sigma|_X)$.

First, let us briefly recall what subshifts generated by substitutions are.
Let $A$ be a finite alphabet with $|A|\ge 2$.
Recall that a \emph{substitution} is a map ƒ$\tau \colon A \ra A^+$; $\tau$ is a substitution of constant length $p$, $p \geq 2$, if $|\tau(a)| = p$ for any $a\in A$; $\tau$ is primitive if there is
$n\in \N$ such that the symbol $a$ appears in ƒ$\tau^n(b)$ for every $a, b \in A$.
We can easily extend $\tau$ form $A$ to $A^+$ by concatenation, and so we can speak about iterates of $\tau$, i.e. $\tau^n(a)=\tau(\tau^{n-1}(a))$.
The substitution $\tau$ generates a subshift
$X_\tauƒ\subset A^\Z$ that is the smallest subshift of $A^\Z$ admitting all words $\set{ƒ\tau^n(a): n\in \N, a\in A}$. When
$\tau$ is primitive, the subshift $X_\tau$ is minimal. For a more detailed exposition of substitutions the reader is referred to \cite{Q}.

Let $\tau\colon A\to A^p$ be a substitution of constant length $p\ge 2$. We say that $\tau$ has \emph{overall coincidence} if for any two $a,b\in A$, there are $t\in \mathbb N$ and $0\leq i <p^t$ such that $\tau^t(a)_i=\tau^t(b)_i$. Note that if $\tau$ has overall coincidence, then we can find one global $t$ good for any two $a,b\in A$. The following simple lemma shows that the same can be done also for $i$.

\begin{lem}\label{lem:coincidences}
Let $\tau\colon A\to A^p$ be a substitution of constant length $p\ge 2$, and suppose that $\tau$ has overall coincidence.
There are $t\in \N$, $0\leq i < p^t$ and $e\in A$ such that $\tau^t(a)_i = e$ for every $a\in A$.
\end{lem}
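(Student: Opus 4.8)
The plan is to prove the statement by induction on the size of subsets of $A$, after first isolating the elementary ``block propagation'' mechanism that makes coincidences persist and spread under further iteration. The observation I would record first is this: if $x,y\in A$, $r\ge 0$, $0\le j<p^r$ and $\tau^r(x)_j=y$, then for every $s\ge 0$ and every $0\le l<p^s$ one has $\tau^{r+s}(x)_{jp^s+l}=\tau^s(y)_l$. Indeed $\tau^{r+s}(x)=\tau^s(\tau^r(x))$, and because $\tau$ has constant length $p$, the image under $\tau^s$ of the $j$-th letter of $\tau^r(x)$, namely $\tau^s(y)$, occupies exactly the positions $[jp^s,(j+1)p^s)$ of $\tau^{r+s}(x)$; reading offset $l$ inside that block gives $\tau^s(y)_l$. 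In particular a coincidence of two letters at a single coordinate at level $r$ becomes a coincidence on a whole block of $p^s$ consecutive coordinates at level $r+s$, so the coincidence level is monotone; this is exactly what justifies the remark preceding the lemma that one $t$ can be chosen good for all pairs.

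Then I would prove, by induction on $|S|$, the assertion: \emph{for every nonempty $S\subseteq A$ there exist $t\in\N$, $0\le i<p^t$ and $e\in A$ with $\tau^t(a)_i=e$ for all $a\in S$}. The case $|S|=1$ is immediate (take $t=1$, $i=0$, $e=\tau(a)_0$ for the unique $a\in S$). For the inductive step, write $S=S'\cup\{b\}$ with $b\notin S'$ and $|S'|=|S|-1$, and apply the inductive hypothesis to $S'$ to get $t',i',e'$ with $\tau^{t'}(a)_{i'}=e'$ for all $a\in S'$. Set $c:=\tau^{t'}(b)_{i'}\in A$, and apply overall coincidence to the pair $e',c$ to obtain $t''\in\N$ and $0\le i''<p^{t''}$ with $\tau^{t''}(e')_{i''}=\tau^{t''}(c)_{i''}=:e$. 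Now put $t:=t'+t''$ and $i:=i'p^{t''}+i''$ (so $0\le i<p^t$). By the block-propagation observation, $\tau^t(a)_i=\tau^{t''}(\tau^{t'}(a)_{i'})_{i''}=\tau^{t''}(e')_{i''}=e$ for each $a\in S'$, and similarly $\tau^t(b)_i=\tau^{t''}(c)_{i''}=e$; hence $\tau^t(a)_i=e$ for every $a\in S$. Taking $S=A$ finishes the proof.

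I do not expect a serious obstacle; the one point needing care is the index bookkeeping in the inductive step — one must track that a coincidence at coordinate $i'$ of $\tau^{t'}(\cdot)$ sits inside the block $[i'p^{t''},(i'+1)p^{t''})$ of $\tau^{t'+t''}(\cdot)$, so that the ``local'' coincidence position $i''$ produced for the single pair $(e',c)$ lifts to one global position $i'p^{t''}+i''$ that works simultaneously for all letters of $S'$ and for $b$. The conceptual content is simply that, even though distinct letters may first agree pairwise at different coordinates, one can always route everything through the already-merged symbol $e'$: look at the symbol $c$ that $b$ produces at the fixed coordinate $i'$, then merge $c$ with $e'$.
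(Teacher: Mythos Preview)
Your proof is correct and follows essentially the same approach as the paper's: both argue by induction on the number of letters, reduce the new letter to the already-merged symbol by reading the same coordinate, apply the pairwise coincidence hypothesis, and combine levels via the index formula $i = i' p^{t''} + i''$. Your explicit isolation of the block-propagation identity $\tau^{r+s}(x)_{jp^s+l}=\tau^s(\tau^r(x)_j)_l$ is a clean touch, but the argument is otherwise the same.
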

\begin{proof}
The proof is an easy induction on the number of letters in $A$ we are comparing at the same time.
First, by the definition of coincidence, if we fix any $a_1,a_2\in A$ then there are $t\in \N$, $0\leq i < p^t$ and $e_2\in A$ such that $\tau^t(a_1)_i=\tau^t(a_2)_i = e_2$.
Now, fix any $n<\# A$, any $n$ distinct letters $a_1,\dots, a_n \in A$ and assume that there are $t\in \N$, $0\leq i < p^t$ such that $\tau^t(a_j)_i=e_n$ for some $e_n\in A$ and all $1\leq j \leq n$. In other words $\tau^t(a_j)=x_j e_n y_j$ where $|x_j|=i$, $|y_j|=p^t-i-1$.

Now, fix any $a_{n+1}\in A$ and denote $e_n'=\tau^t(a_{n+1})_i$, say $\tau^t(a_{n+1})=x_{n+1}e_n' y_{n+1}$ where by the definition $|x_{n+1}|=i$. There are $t'\in \N$, $0\leq i' < p^{t'}$ and $e\in A$ such that $\tau^{t'}(e_{n})_{i'}=\tau^{t'}(e_{n}')_{i'}=e$

Now if we denote $m=ip^{t'}+i'$ then ($j\leq n$):
\begin{eqnarray*}
\tau^{t'+t}(a_j)_m&=& (\tau^{t'}(x_j)\tau^{t'}(e_n)\tau^{t'}(y_j))_m=\tau^{t'}(e_n)_{i'}=e\\
\tau^{t'+t}(a_{n+1})_m&=& (\tau^{t'}(x_{n+1})\tau^{t'}(e_n')\tau^{t'}(y_{n+1}))_m=\tau^{t'}(e_{n}')_{i'}=e.
\end{eqnarray*}
and so the result follows.
\end{proof}

This shows that the definition of coincidence from \cite{Q} and the definition of overall coincidence given above (which comes from \cite{BDM}) coincide.
Given a primitive substitution of constant length $\tau\colon A \ra A^+$ we define (after \cite{Kamae}) the \emph{column number} of $\tau$ by:
$$
C(\tau)=\min_{k\in \N}\min_{0\leq j < p^k}\# \set{\tau^k(a)_j : a\in A}.
$$

We say that a finite sequence $X_0,\ldots, X_n$ of \emph{regularly closed sets} (i.e. $\overline{\Int X_j}=X_j$) is a regular periodic decomposition of $X$
if $\Int X_j \cap X_i=\emptyset$ for $i\neq j$, $X=\bigcup_j X_j$ and $f(X_j)=X_{j+1}$ for $0\leq j <n$ and $f(X_n)=X_0$. By a \emph{periodic $f^m$ minimal partition} of
$X$ we mean a regular periodic decomposition $X_0,\ldots, X_{m-1}$ of $X$ whose elements are minimal for $f^m$. It is not hard to see that indeed it is a partition, that is the condition $X_j\cap X_i= \emptyset$ must be satisfied for any $i\neq j$.
The equivalence relation whose classes are the members of periodic $f^n$ minimal partition is denoted by $\Lambda_n$ and \emph{the trace relation} is defined by $\Lambda=\bigcap_{n\in \N}\Lambda_n$. For any $x\in X_\tau$ denote by $[x]_\Lambda$ the equivalence class of $x$ in $\Lambda$.

Combining the main result of \cite{Kamae} (that is \cite[Theorem~5]{Kamae}) with Lemma~13 and Theorem~2 from the same paper we get the following:
\begin{thm}\label{thm:sinle_fib_subst}
Let $\tau\colon A \ra A^p$ be a primitive substitution a constant length $p\geq 2$.
Then $\min_{x\in X_\tau} \# [x]_\Lambda \leq C(\tau)$.
\end{thm}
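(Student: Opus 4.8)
The plan is to derive the inequality from Kamae's structure theory by identifying $X_\tau/\Lambda$ with the maximal equicontinuous factor of $(X_\tau,\sigma)$, so that the cells $[x]_\Lambda$ become exactly the fibers of the odometer factor map; the asserted bound then turns into a statement about the smallest fiber cardinality, which is precisely what the column number measures. I would organize the argument in three steps.

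\emph{Identifying the trace relation.} Since $\tau$ is primitive of constant length $p$, the subshift $(X_\tau,\sigma)$ has a well-defined height $h$ (coprime to $p$), and its natural tower presentation yields, for every $k\in\N$, a periodic $\sigma^{hp^k}$ minimal partition of $X_\tau$ into $hp^k$ clopen pieces (positions read modulo $hp^k$). These partitions are nested, and — this is where I would quote \cite[Theorem~2 and Lemma~13]{Kamae} — cofinal among \emph{all} periodic minimal partitions of $X_\tau$. Consequently $\Lambda=\bigcap_n\Lambda_n$ is the equicontinuous structure relation of $(X_\tau,\sigma)$, and the factor map $\pi\colon X_\tau\to X_\tau/\Lambda=:Z$ presents $Z$ as the odometer $\lim_{\longleftarrow}(\Z/hp^k\Z)$ with the $+1$ map, which for constant-length substitutions is the maximal equicontinuous factor. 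In particular $[x]_\Lambda=\pi^{-1}(\pi(x))$ for every $x\in X_\tau$, so
\[
\min_{x\in X_\tau}\#[x]_\Lambda=\min_{z\in Z}\#\pi^{-1}(z).
\]

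\emph{Counting a fiber.} Now I would invoke \cite[Theorem~5]{Kamae}, the main result of that paper, which relates the column number to the fibers of the maximal equicontinuous factor; in the form needed here it gives $\#\pi^{-1}(z)\le C(\tau)$ for every $z$ in a residual (in particular nonempty) subset of $Z$ — indeed $\#\pi^{-1}(z)=C(\tau)$ on such a set. Together with the displayed identity this yields $\min_{x\in X_\tau}\#[x]_\Lambda\le C(\tau)$, which is the claim. (An explicit witness is also available: pick $k$ and $0\le j<p^k$ attaining the minimum in the definition of $C(\tau)$ and take $x^{*}\in X_\tau$ whose $p$-adic address has its first $k$ digits equal to the base-$p$ expansion of $j$ and all later digits $0$; reading off the tower shows the $0$th coordinate of every point of $\pi^{-1}(\pi(x^{*}))$ lies in $\{\tau^k(a)_j:a\in A\}$, a set of size $C(\tau)$, and that a point of the fiber is determined by that coordinate. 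This quantitative bookkeeping is essentially Kamae's Lemma~13, which is why the citation route is cleaner. A minor formality en route: the double minimum defining $C(\tau)$ is attained at a concrete $(k,j)$, since the column counts are non-increasing along a suitable refinement of $(k,j)$, by the same mechanism as in Lemma~\ref{lem:coincidences}.)

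The step I expect to be the real obstacle is the first one. The existence of the odometer factor of a constant-length substitution subshift is classical; what must be checked is that the paper's abstractly defined trace relation $\Lambda$ really coincides with it — i.e. that every periodic minimal partition of $X_\tau$ is refined by one of the canonical $hp^k$-partitions, rather than $\Lambda$ being strictly coarser. This matching of definitions is exactly the content one has to extract from Kamae's Theorem~2 and Lemma~13; once it is secured, Theorem~5 applies verbatim and the inequality drops out.
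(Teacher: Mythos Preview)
Your proposal is correct and is essentially the same approach the paper takes: the paper does not give a self-contained proof but simply states that the result follows by combining \cite[Theorem~5]{Kamae} with Lemma~13 and Theorem~2 of the same paper, which are precisely the three ingredients you invoke and whose roles you spell out. In effect you have reconstructed the details behind the paper's one-line citation.
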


\begin{thm}
Let $\tau\colon A\to A^p$ be a substitution of constant length $p\ge 2$, and suppose that $\tau$ has overall coincidence.
Then $\Prox(\sigma|_{X_\tau})=\SProx(\sigma|_{X_\tau})$.
\end{thm}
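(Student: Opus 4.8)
Since $\SProx(\sigma|_{X_\tau})\subseteq \Prox(\sigma|_{X_\tau})$ holds for every system, the entire content is the reverse inclusion. The plan is to exhibit a distal minimal factor of $(X_\tau,\sigma)$ with at least one singleton fibre, and then quote Corollary~\ref{prop:syprox}. The point through which the overall coincidence hypothesis enters is the column number: applying Lemma~\ref{lem:coincidences} I obtain $t\in\N$, $0\le i<p^t$ and $e\in A$ with $\tau^t(a)_i=e$ for all $a\in A$, so $\#\{\tau^t(a)_i:a\in A\}=1$ and hence $C(\tau)=1$ (it is a minimum of cardinalities of nonempty sets, so it cannot be smaller). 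Theorem~\ref{thm:sinle_fib_subst} then gives $\min_{x\in X_\tau}\#[x]_\Lambda\le C(\tau)=1$; since $x\in[x]_\Lambda$ always, the minimum equals $1$ and is attained, i.e. there is a point $x_0\in X_\tau$ whose trace class $[x_0]_\Lambda$ is a singleton.

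Next I would turn the trace relation into an honest factor map. Each $\Lambda_n$ is the equivalence relation of a periodic $\sigma^n$ minimal partition $X_0,\dots,X_{n-1}$; these pieces are closed, pairwise disjoint, and cover $X_\tau$, hence each is clopen, so $\Lambda_n$ is clopen and the quotient map $\pi_n\colon X_\tau\to X_\tau/\Lambda_n$ is a factor map onto the cyclic rotation on $n$ points. Consequently $\Lambda=\bigcap_n\Lambda_n$ is a closed invariant equivalence relation and the combined map $\pi=(\pi_n)_n$ is a factor map from $(X_\tau,\sigma)$ onto $Z:=X_\tau/\Lambda=\lim_{\longleftarrow} X_\tau/\Lambda_n$. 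The induced map on $Z$ is minimal, being a factor of the (primitive, hence minimal) subshift $X_\tau$, and it is distal, being an inverse limit of finite cyclic rotations, each of which is equicontinuous; equivalently, $Z$ is the maximal equicontinuous factor of $X_\tau$. Finally $\pi^{-1}(\pi(x_0))=[x_0]_\Lambda=\{x_0\}$ is a singleton fibre, so Corollary~\ref{prop:syprox} applies to $\pi$ and yields $\Prox(\sigma|_{X_\tau})=\SProx(\sigma|_{X_\tau})$.

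The genuinely nontrivial input — extracting a point with a singleton fibre — is packaged in Theorem~\ref{thm:sinle_fib_subst}, so what remains is essentially bookkeeping; the one place asking for a little care is checking that $Z$ meets the hypotheses of Corollary~\ref{prop:syprox}, namely that it is distal and minimal. Minimality follows from minimality of $X_\tau$ (which relies on primitivity of $\tau$, implicitly assumed here as in Theorem~\ref{thm:sinle_fib_subst}), and distality from the inverse-limit-of-finite-rotations description of $Z$. If one prefers to bypass $Z$, an alternative is to argue that a single injectivity point of $\pi$ over the equicontinuous base forces the injectivity locus to be residual, so that $X_\tau$ is an almost $1$–$1$ extension of $Z$, and then conclude via Corollary~\ref{cor:syprox}; but routing the argument through Corollary~\ref{prop:syprox} is cleaner since it needs only the one singleton fibre we already have.
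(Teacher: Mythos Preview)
Your argument is correct and follows the same overall arc as the paper: use Lemma~\ref{lem:coincidences} to get $C(\tau)=1$, invoke Theorem~\ref{thm:sinle_fib_subst} to obtain a point with singleton trace class, and conclude via a distal minimal factor with a singleton fibre. The difference is in how the last step is packaged. The paper observes that a point with singleton trace class is regularly recurrent, then quotes \cite[Theorem~5.1]{Downar} to deduce that $X_\tau$ is an almost $1$--$1$ extension of an odometer, and finishes with Corollary~\ref{cor:syprox}. You instead build the equicontinuous factor $Z=X_\tau/\Lambda$ directly as an inverse limit of finite rotations and apply Corollary~\ref{prop:syprox} using only the one singleton fibre you already have. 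Your route is slightly more self-contained (it avoids the external reference to \cite{Downar}), while the paper's route makes the odometer structure of the base explicit; both rest on the same substantive input, namely Theorem~\ref{thm:sinle_fib_subst}. Your remark that primitivity is being used implicitly is well taken, and note that the paper also disposes of the finite case separately at the outset, which your argument handles implicitly since a finite minimal system is already equicontinuous.
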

\begin{proof}
If $X_\tau$ is finite then it is periodic orbit and so there is nothing to prove. For the case of $X_\tau$ infinite,
by Lemma~\ref{lem:coincidences} and Theorem~\ref{thm:sinle_fib_subst} we see that at least one class of $\Lambda$ is a singleton.
It is not hard to verify that if $[x]_\Lambda$ is a singleton, then $x$ is \emph{regularly recurrent}, that is for any open set $U\ni x$ there is $k$
such that $f^{kj}(x)\in U$ for all $j\geq 0$. But then we can use \cite[Theorem~5.1]{Downar} which says that any system defined by the orbit closure of
a regularly recurrent point is an almost 1-1 extension of an odometer. It remains to apply Corollary~\ref{cor:syprox} and the proof is completed.
\end{proof}

Using \cite[Proposition~3.5]{BDM} we see that there are numerous substitutions with syndetically scrambled pairs.
\begin{exmp}
Consider the substitution on $A=\set{0,1}$ given by $\tau(0)=001$, $\tau(1)=100$. If we denote $f=\sigma|_{X_\tau}$ then $f$ has scrambled pairs
and $\Prox(f)=\SProx(f)$ (so every scrambled pair is syndetically scrambled).
\end{exmp}

Let $f\colon X \ra X$ be a continuous map on a compact metric space. Recall that a pair $(x,y)$ is \emph{DC1} if:
$\Phi^*_{xy}(t)=1$ for every $t>0$ and $\Phi_{xy}(s)=0$ for some $s>0$ where
\begin{eqnarray*}
\Phi^*_{xy}(t)&=&\limsup_{n\ra\infty}\frac{1}{n}\# \set{i\leq n : d(f^i(x),f^i(y))<t},\\
\Phi_{xy}(t)&=&\liminf_{n\ra\infty}\frac{1}{n}\# \set{i\leq n : d(f^i(x),f^i(y))<t}.\\
\end{eqnarray*}

The following observation was first used in \cite{DCRev} to show that proximal system can never have DC1 pairs.

\begin{rem}
If $(x,y)\in \SProx(f)$ then it is not DC1, since for every $s>0$ there is $M\in \N$ such that for every $n\geq 0$ and $0\leq r<M$
$$
\frac{1}{nM+r}\# \set{i\leq Mn+r : d(f^i(x),f^i(y))<s}\geq \frac{n}{Mn+r}\geq \frac{1}{2M}
$$
and as a consequence $\Phi_{xy}(s)>0$ for every $s>0$.
\end{rem}

It was proved in \cite{hwfl} that there are substitutions with DC1 pairs, thus in all such cases $\Prox(\sigma|_{X_\tau})\neq \SProx(\sigma|_{X_\tau})$.
The natural question is whether it is the full classification of substitutions with proximal but not syndetically proximal pairs.
The following example shows that the answer to this question must be in the negative.

Before we proceed let us recall two definitions from \cite{hwfl}.
A pair of words $u,v\in A^+$ is \emph{mutually exclusive} (denoted by $u \vee v$) if $\tau^k(u)_i\neq \tau^k(v)_i$ for each $k,i$. If for any choice of $k,i$
letters $\tau^k(u)_i,\tau^k(v)_i$ are not mutually exclusive, we say that $u,v$ are \emph{mutually attractive} (denoted by $u \wedge v$).

\begin{exmp}
Let $A=\{a,b,c,d\}$.
Consider the substitution of constant length $\tau\colon A \to A^3$ given by:
$$
\tau(a)=aab, \quad  \tau(b)=bad,\quad \tau(c)=ccd,\quad \tau(d)=dcb.
$$
Then $\tau$ is primitive, $\Prox(\sigma|_{X_\tau})\neq \SProx(\sigma|_{X_\tau})$ but $\tau$ does not have DC1 pairs.
\end{exmp}
\begin{proof}
First observe that $\tau$ is primitive.
Next, note that if we fix any $n\in \N$ and $0\leq i < 3^n$ then $\set{\tau^n(a)[i],\tau^n(c)[i]}$ is either $\set{a,c}$ or $\set{b,d}$ and the same holds for
$\set{\tau^n(b)[i],\tau^n(d)[i]}$. In particular $\tau^n(b)[i]\neq \tau^n(d)[i]$ for any $n\in \N$ and $0\leq i < 3^n$.
Now if we consider points
$$
x^a=\lim_{n\ra\infty}\tau^n(a),\quad x^b=\lim_{n\ra\infty}\tau^n(b)
$$
then $(x^a,x^b)$ is obviously a proximal pair since $\tau(a)[1]=\tau(b)[1]=a$ and thus $x^a_{[3^n,3^n 2)}=x^b_{[3^n,3^n 2)}=\tau^n(a)$.
But it cannot be syndetically proximal, since $\tau(a)[2]=b$ while $\tau(b)[2]=d$ and so $x^a_{[3^n 2,3^{n+1})}=\tau^n(b)$, $x^a_{[3^n 2,3^{n+1})}=\tau^n(d)$
which implies
$$
\set{i: d(f^i(x^a),f^i(x^b))>1/2}\supset \N \cap \bigcup_{i}[3^i 2,3^{i+1}).
$$
It remains to prove that there is no DC1 pair in $X_\tau$.
By \cite[Theorem~3.3]{hwfl} we see that if there are DC1 pairs then there must exist letters $\gamma_1,\gamma_2,\beta_1, \beta_2$
such that for some $n\in \N$ we have
$$
\tau^n(\beta_i)=C_i\gamma_i\eta_i D_i \beta_i E_i,\quad \tau^n(\gamma_i)=F_i\beta_i\delta_i G_i \gamma_i H_i
$$
where $|C_1|=|C_2|$, $|D_1|=|D_2|$, $|F_1|=|F_2|$, $|G_1|=|G_2|$, $E_1 \wedge E_2$ and $H_1\vee H_2$, $\eta_1\vee \eta_2$, $\delta_1 \wedge \delta_2$.
(here $\delta_i, \eta_i\in A$ and words $E_i,H_i\in A^+$, while others can be empty words). It is obvious from the definition that $\beta_1\neq \beta_2$ and $\gamma_1\neq \gamma_2$.

First note that for any letter $p\in A$
the word $\tau^n(p)$ ends with $b$ or $d$. Another immediate observation is that the last letters of $\tau^n(\gamma_i)$ cannot be the same. Since $\tau^n(a)[i]\neq \tau^n(c)[i]$ and $\tau^n(b)[i]\neq \tau^n(d)[i]$ for any choice of $n,i$ the only possibility is that
$$
(\gamma_1,\gamma_2)\in \set{(c,d), (a,b)}.
$$

First suppose $(\gamma_1,\gamma_2)=(c,d)$. Observe that if $(s,t)$ is a pair in $A$ such that $(\tau(s)[i],\tau(t)[i])=(c,d)$, then $(s,t)=(c,d)$ and $i=1$, that is, $(c,d)$ must appear as the prefix of $(\tau(s),\tau(t))$. By induction, we get that if $(s,t)$ is a pair in $A$ with $(\tau^n(s)[i],\tau^n(t)[i])=(c,d)$, then $(s,t)=(c,d)$ and $i=1$. But $\tau^n(\gamma_i)=F_i\beta_i\delta_i G_i \gamma_i H_i$ which forces the entire block $F_i\beta_i\delta_i G_i$ to become the empty word, a contradiction since $\beta_i$ is a letter.

The case $(\gamma_1,\gamma_2)=(a,b)$ is also ruled out by a similar argument. So we see that equivalent conditions form \cite[Theorem~3.3]{hwfl} cannot be satisfied. This proves that there is no DC1 pair in $X_\tau$ and so the proof is completed
\end{proof}

\section{A map without nontrivial syndetically proximal pairs}\label{CounterEg}

An immediate question, when looking at the definitions is whether the existence of (non-diagonal) proximal pairs forces syndetically proximal pairs to exist.
This question was explicitly stated in \cite[Remark on p.4]{tksm}. The following example shows that the answer must be in the negative.

\begin{exmp}\label{ex:pros_synd}
There is a compact metric space $X$ and a homeomorphism $f\colon X\to X$ such that $\SProx(f)=\Delta_X$, but $\Prox(f)\ne \Delta_X$.
\end{exmp}
\begin{proof}
Use the notation $e(\theta)=e^{2\pi i\theta}$ for convenience. Let $\alpha\in (0,1)$ be an irrational number and let $g\colon \Si\to \Si$ be the corresponding irrational rotation of the unit circle $\Si\subset \C$ given by $g(x)=e(\alpha)x$. Let $m(0)=0$ and $m(n)=1+2+\cdots+n$ for $n\in \mathbb N$. Define a sequence of points $z_j=(x_j,y_j)\in \Si\times \mathbb R$ inductively as follows.

\begin{eqnarray*}
(x_0,y_0)&=&(1,1)\in \Si\times \R;\\
(x_{j+1},y_{j+1})&=&(e(\alpha/n)g(x_j),1/n), \textrm{ if } m({n-1})\le j<m(n) \textrm{ and } n  \textrm{ is odd};\\
(x_{j+1},y_{j+1})&=&(e(-\alpha/n)g(x_j),1/n),\textrm{ if } m({n-1})\le j<m(n) \textrm{ and }n \textrm{ is even.}
\end{eqnarray*}

We list a few first terms of the sequence to give a quick idea of what is going on:
\begin{eqnarray*}
&&(1,1), (e(2\alpha),1), (e(2\tfrac{1}{2}\alpha),\tfrac{1}{2}), (e(3\alpha),\tfrac{1}{2}),\\
&& (e(4\tfrac{1}{3}\alpha),\tfrac{1}{3}), (e(5\tfrac{2}{3}\alpha),\tfrac{1}{3}), (e(7\alpha),\tfrac{1}{3}),\\
&& (e(7\tfrac{3}{4}\alpha),\tfrac{1}{4}), (e(8\tfrac{2}{4}\alpha),\tfrac{1}{4}), (e(9\tfrac{1}{4}\alpha),\tfrac{1}{4}), (e(10\alpha),\tfrac{1}{4}),\\
&& (e(11\tfrac{1}{5}\alpha),\tfrac{1}{5}),\ldots
\end{eqnarray*}

Using mathematical induction, let us observe the following:

\begin{enumerate}[(a)]
\item $x_{m(n)}=e((m(n)+1)\alpha)=g^{m(n)}(g(1))$, if $n$ is odd.
\item $x_{m(n)}=e(m(n)\alpha)=g^{m(n)}(1)$, if $n$ is even.
\end{enumerate}

For $j<0$ define $z_{j}=(e(j\alpha),2-\frac{1}{1-j})$

Let $X=\{z_j=(x_j,y_j):j\in \Z\}\cup (\Si\times \{0,2\})\subset \mathbb C\times \mathbb R$, and let $d$ be the metric on $X$ induced by the Euclidean metric from $\R^3$. Clearly $X$ is compact since $\lim_{j\ra \infty}y_j=0$ and $\lim_{j\ra -\infty}y_j=2$. Define $F\colon X\ra X$ putting $F(x,0)=(g(x),0)$, $F(x,2)=(g(x),2)$ for $x\in \Si$, and $F(z_j)=z_{j+1}$ for $j\in \Z$. Then we have that:

\begin{enumerate}[(i)]
\item $F$ and its inverse are continuous, since $\lim_{n\ra \infty}\alpha/n= 0$.

\item The element $z_0$ is proximal to both $(1,0)$ and $F(1,0)$ under the action of $F$ (by the definition of $F$ and by previous observations (a) and (b)).

\item $z_j$ is not syndetically proximal to any point of $\Si \times \{0,2\}$ (since any two distinct points of $\Si\times \{0\}$ are distal, and since $z_j$ is proximal to two distinct points of $\Si \times \{0\}$, namely $F^j(1,0)$ and $F^{j+1}(1,0)$.)

\item If $j<k$, then $z_j$ is not syndetically proximal to $z_k$ (since for any $\epsilon>0$, the set $\{n\in \mathbb N:d(F^n(z_j),F^n(F^j(1,0)))<\epsilon \text{ and }d(F^n(z_k),F^n(F^k(1,0)))<\epsilon\}$ is thick by (ii) and the points $F^j(1,0)$, $F^k(1,0)$ form a distal pair).
\end{enumerate}
\end{proof}

\begin{rem}
While it would be nice to have transitivity (or better minimality) in Example~\ref{ex:pros_synd}, it seems that construction of such system can be tough.
At least it must have zero topological entropy, cannot be almost 1-1 extension of distal system, neither can be expansive. Simply, systems with positive topological entropy \cite{Blanchard} as well as well as expansive systems \cite{King} always have non-diagonal asymptotic pairs; distality is excluded by Corollary~\ref{cor:syprox}. This disables possibility of application of most common techniques from topological dynamics (in particular such a system cannot be a subshift).
\end{rem}

\section{Interval maps}\label{sec:interval}

By an interval map we mean a continuous map $f\colon [0,1]\to [0,1]$. Let us divide the class of dynamical systems $([0,1],f)$
into four disjoint subclasses, depending on the dynamical properties of $f$:
\begin{enumerate}[(i)]
\item Mixing interval maps,

\item Transitive but not mixing interval maps,

\item Non-transitive interval maps with $\htop(f)>0$,

\item Interval maps with $\htop(f)=0$.
\end{enumerate}

In this section and the next section, we provide information about the possible existence and the nature of syndetically scrambled sets for maps in these four classes. It is already known that interval maps $f$ with $\htop(f)>0$ have uncountable syndetically scrambled sets invariant under some power of $f$, and also have Cantor syndetically scrambled sets \cite[Theorem~10 and Theorem~11]{tksm}.

\subsection{Transitive interval maps}

In the rest of this section, we consider transitive interval maps (both mixing and non-mixing).

\begin{thm}\label{cdinvss}
Let $([0,1],f)$ be a mixing dynamical system. Then there are $n\in \mathbb N$, $\eps>0$, and a dense Mycielski set $T\subset [0,1]$ such that $T$ is an $f^{2^n}$-invariant syndetically $\eps$-scrambled set for $f$.
\end{thm}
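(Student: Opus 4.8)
The plan is to reduce to the full shift case already handled in Theorem~\ref{thm:full_shift} (or rather, to a mixing SFT situation covered by Theorem~\ref{SFTperiod} and then transferred through a factor map). The classical structure theorem for mixing interval maps states that if $([0,1],f)$ is mixing then $f$ has the shadowing property and is, up to a finite-to-one semiconjugacy, modelled by a mixing subshift of finite type; more concretely, one uses that a mixing interval map has a horseshoe, i.e. there are $k\in\mathbb N$, disjoint closed subintervals $J_0,J_1\subset[0,1]$ and a closed $f^k$-invariant set $\Lambda\subset J_0\cup J_1$ on which $(\Lambda,f^k)$ is semiconjugate to the full $2$-shift $(\Sigma_2^+,\sigma)$ via a factor map $\pi$ which is at most two-to-one (the only identifications occur at endpoints of the intervals coding the two symbols). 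Since a horseshoe gives positive entropy and, after passing to a power, a genuinely mixing invariant subsystem, the cleanest route is: first invoke the horseshoe/coding to get a finite-to-one factor map $\pi\colon(\Sigma_2^+,\sigma)\to(\Lambda,f^k)$, then apply Theorem~\ref{thm:full_shift} to get a dense Mycielski $\sigma$-invariant syndetically scrambled set inside $\Sigma_2^+$ together with the separation data (any two distinct points have infinitely many coordinates where they differ, which gives the uniform $\eps$ needed), and finally push this down through $\pi$ using Proposition~\ref{t-to-one} to obtain an uncountable (indeed Cantor, hence by Lemma~\ref{cdense} c-dense, hence dense Mycielski) syndetically $\eps$-scrambled set for $f^k$ inside $\Lambda$.

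More carefully, here are the steps in order. (1) Using that $f$ is mixing on $[0,1]$, fix $n$ with $2^n\ge k$ (or just work with a suitable power) so that the horseshoe sits inside the dynamics of $f^{2^n}$; the notation $f^{2^n}$ in the statement strongly suggests the intended route is through the standard dichotomy for interval maps where the period-doubling structure appears, so one takes the power of $f$ under which the relevant mixing SFT subsystem becomes $f^{2^n}$-invariant. (2) Produce the coding factor map $\pi$ from a mixing SFT $X$ (coming from the horseshoe) onto an $f^{2^n}$-invariant subset of $[0,1]$, with $\#\pi^{-1}(y)\le t$ for a fixed $t$ (here $t=2$ suffices for a two-interval horseshoe). (3) Apply Theorem~\ref{SFTperiod} to $X$ (which is an infinite mixing SFT containing a periodic point, namely the fixed point coming from one of the horseshoe intervals) to obtain a dense Mycielski syndetically scrambled set $S\subset X$ invariant under the appropriate power of $\sigma$ and containing that periodic point. (4) Check the separation hypothesis of Proposition~\ref{t-to-one}: for any $t+1$ distinct points of $S$ there are infinitely many times at which they are pairwise $\eps$-separated for a uniform $\eps$ — this is immediate for subshifts, where distinct points disagree in infinitely many coordinates and one can find times forcing a prescribed disagreement pattern among finitely many of them (exactly as in the block arguments in the proofs of Theorems~\ref{SFTperiod} and \ref{MixSynchro}). (5) Conclude via Proposition~\ref{t-to-one} that $\pi(S)$ contains a Cantor syndetically $\eps$-scrambled set for $f^{2^n}$, and since a syndetically scrambled set for $f^{2^n}$ is automatically one for $f$, and since $\pi(S)$ is $f^{2^n}$-invariant, transfer the invariance down as well. (6) Finally, to get \emph{density}, use mixing of $f$: the set $S$ was constructed so that $f^n(S)$ eventually covers any given open set in the interval (this is where mixing, via Lemma~\ref{cdense} or Lemma~\ref{InvMyc}, is used to upgrade a nowhere-dense Cantor scrambled set to a dense Mycielski one). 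Applying Lemma~\ref{InvMyc} to the $f^{2^n}$-invariant syndetically scrambled set obtained in step (5) then yields the desired dense Mycielski $f^{2^n}$-invariant syndetically $\eps$-scrambled set.

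The main obstacle I expect is not the symbolic part but establishing the correct \emph{interval-map} input cleanly: namely that a mixing interval map contains, inside some power $f^{2^n}$, a mixing SFT subsystem that is (a) finite-to-one semiconjugate via an explicit interval coding, and (b) spread out enough that its image under iterates covers arbitrary subintervals (so that the density upgrade via Lemma~\ref{InvMyc} applies). The horseshoe exists by positive entropy, but getting a genuinely mixing invariant SFT (not merely transitive) and controlling the fiber cardinality of the coding map requires the standard but slightly technical machinery of Markov partitions / the structure theory of mixing interval maps (e.g. that a mixing interval map is, after a finite-to-one factor, a subshift, or Blokh's spectral decomposition). Once those facts are in hand, everything else is an assembly of Theorem~\ref{SFTperiod}, Proposition~\ref{t-to-one}, and Lemma~\ref{InvMyc}, with the uniform $\eps$ coming for free from the symbolic separation and the choice of disjoint coding intervals. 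The role of the exponent $2^n$ (rather than an arbitrary $n$) comes precisely from the period-doubling phenomenon in the decomposition of transitive interval maps into mixing pieces, so I would be careful to track which power of $f$ makes the SFT piece invariant rather than merely cyclically permuted.
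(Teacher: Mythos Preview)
The paper's argument is shorter and bypasses the factor-map machinery entirely. It quotes \cite[Theorem~9]{tksm}, which for an interval map with positive entropy produces $n\in\mathbb N$, an $f^{2^n}$-invariant closed set $X\subset[0,1]$, and a genuine \emph{conjugacy} $\phi\colon(X,f^{2^n})\to(\Sigma_4^+,\sigma)$, with the four cylinders corresponding to four pairwise disjoint closed intervals $J_0,J_1,J_2,J_3$ appearing in that left-to-right order. One then pulls back a $\sigma$-invariant Mycielski syndetically scrambled set $\Lambda\subset\{1,2\}^{\mathbb N_0}$ (Theorem~\ref{thm:full_shift}) to $S=\phi^{-1}(\Lambda)\subset J_1\cup J_2$. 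The reason for using only the two \emph{middle} symbols is that then $S\subset(\eps,1-\eps)$ with $\dist(J_1,J_2)>\eps$; mixing gives $f^m(U)\supset(\eps,1-\eps)\supset S\supset (f^{2^n})^m(S)$ for every nonempty open $U$ and all large $m$, so Lemma~\ref{InvMyc} applies directly to $g=f^{2^n}$. The exponent $2^n$ is inherited from \cite[Theorem~9]{tksm}, not from any spectral decomposition argument you would need to supply.

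Your route through a finite-to-one semiconjugacy and Proposition~\ref{t-to-one} has two concrete gaps. First, the hypothesis of Proposition~\ref{t-to-one} asks that any $t+1$ distinct points of $S$ be \emph{simultaneously} pairwise $\eps$-separated along an infinite set of times; with a two-symbol coding and $t=2$ this means three sequences must be pairwise far apart at a common iterate, which is not ``immediate for subshifts'' (at any single coordinate two of the three symbols must coincide) and is not what the block arguments of Theorems~\ref{SFTperiod} or \ref{MixSynchro} provide---those handle pairs, not triples. Second, even granting the hypothesis, Proposition~\ref{t-to-one} only returns a Cantor scrambled \emph{subset} of $\pi(S)$ with no invariance; your phrase ``transfer the invariance down as well'' is not justified by that proposition, and Lemma~\ref{InvMyc} needs the ambient scrambled set itself (not merely a Cantor piece of it) to be invariant. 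Both issues are avoidable: because the coding intervals are disjoint, any two distinct points of $\pi(S)$ lift to a scrambled pair in $S$ whose iterates land in different coding intervals infinitely often, so $\pi(S)$ is already $f^{2^n}$-invariant and syndetically $\eps$-scrambled and Proposition~\ref{t-to-one} is unnecessary. Even after this repair, however, you still need your coding intervals to lie in $(\eps,1-\eps)$ to run the density upgrade via Lemma~\ref{InvMyc}, which is exactly why the paper works with a four-interval horseshoe and keeps only the inner two.
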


\begin{proof}
It is easy to verify that any mixing interval map has positive topological entropy. Then using
\cite[Theorem~9]{tksm} we can find $n\in \mathbb N$, an $f^{2^n}$-invariant closed set $X \subset [0,1]$ and a conjugacy $\phi\colon (X,f^{2^n}) \ra (\Sigma_4,\sigma)$. It is an immediate consequence of the construction of $X$ in the proof of \cite[Theorem~9]{tksm} that there are pairwise disjoint closed intervals $J_0,J_1,J_2,J_3\subset [0,1]$ such that $\phi(X\cap J_i)=C[i]$ where $C[i]$ denotes the cylinder set $C[i]=\set{y\in \Sigma_4\; : \; y_0=i}$. Without loss of generality we may assume that the four intervals $J_0,J_1,J_2,J_3$ appear in that order from left to right inside of the interval $[0,1]$ (i.e $x<y$ for all $x\in J_i$, $y\in J_{i+1}$). Then there is $\eps>0$ such that $J_1\cup J_2\subset (\epsilon,1-\epsilon)$ and $\dist(J_1,J_2)>\eps$. Let $\Lambda\subset \{1,2\}^{\N}$ be a Mycielski $\sigma$-invariant syndetically scrambled set for $\sigma$ given by Theorem~\ref{thm:full_shift}, and let $S=\phi^{-1}(\Lambda)\subset J_1\cup J_2\subset [0,1]$. Note that, since $\dist(J_1,J_2)>\eps$, $S$ is a Mycielski $f^{2^n}$-invariant syndetically $\eps$-scrambled set for $f^{2^n}$, with the additional property $S\subset (\eps,1-\eps)$. Write $g=f^{2^n}$. If $U\subset [0,1]$ is any nonempty open set, then $f^m(U)\supset (\epsilon, 1-\eps)\supset S\supset g^m(S)$ for all $m$ large enough. Now it is enough to apply Lemma~\ref{InvMyc} to the map $g$ and the proof is finished.
\end{proof}

We see that the assumption of mixing plays an important role in the above proof. If $f$ is transitive but not mixing, then it is well known that $[0,1]$ can be decomposed into two intervals $J_0=[0,x]$, $J_1=[x,1]$ with common endpoint $x\in (0,1)$ such that $f(J_0)=J_1$, $f(J_1)=J_0$ and $f|_{J_i}$ is mixing for $i=0,1$ (e.g. see Theorems~7.1 and 7.2 in \cite{Banks}). Let us start with two simple observations which will explain why we need more sophisticated techniques to deal with this case.

\begin{prop}\label{prop:1}
Let $([0,1],f)$ be transitive but not mixing and let $x\in (0,1)$ be the unique fixed point of $f$.
Let $y\in [0,x)$ and $z\in (x,1]$. Then $(y,z)\in \SProx(f)$ if and only if  $(y,x), (x,z)\in \SProx(f)$.
\end{prop}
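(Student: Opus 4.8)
The plan is to exploit the fact that $x$ is the unique fixed point, that it is the common endpoint of the two halves $J_0=[0,x]$, $J_1=[x,1]$ with $f(J_0)=J_1$ and $f(J_1)=J_0$, and that $f^2|_{J_i}$ is mixing. One direction is almost immediate: since $\SProx(f)$ is an equivalence relation (this is standard — it follows from the triangle inequality together with the fact that an intersection of finitely many syndetic sets is syndetic, cf.\ \cite{tksm}), if $(y,x)\in\SProx(f)$ and $(x,z)\in\SProx(f)$ then $(y,z)\in\SProx(f)$. So the whole content is in the forward implication.

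For the forward implication, suppose $(y,z)\in\SProx(f)$ with $y\in[0,x)$ and $z\in(x,1]$. First I would observe that the orbit of $y$ stays in $J_0\cup J_1$ with $f^n(y)\in J_0$ iff $n$ is even, and similarly $f^n(z)\in J_1$ iff $n$ is even (here I use $y<x<z$ and $f(J_0)=J_1$, $f(J_1)=J_0$, together with the fact that no point other than $x$ maps to $x$ — actually one must be slightly careful: a point could land on $x$; but if $f^n(y)=x$ for some $n$ then $f^{n+1}(y)=x$ and $y$ is eventually fixed, and then $(y,x)\in\Asy(f)\subset\SProx(f)$ trivially, and a short separate argument handles $z$; I would dispose of this degenerate case first). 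So, generically, for even $n$ we have $f^n(y)\in J_0=[0,x]$ and $f^n(z)\in J_1=[x,1]$, hence $d(f^n(y),f^n(z))\ge \max(d(f^n(y),x),d(f^n(z),x))$ — both $f^n(y)$ and $f^n(z)$ lie on opposite sides of $x$, so the distance between them dominates each one's distance to $x$. The key point: fix $\eps>0$; the set $A=\{n:d(f^n(y),f^n(z))<\eps\}$ is syndetic by hypothesis, so $A$ meets every thick set, in particular it contains infinitely many even integers and with bounded gaps among the evens as well. Wait — more carefully: I want to show $\{n:d(f^n(y),x)<\eps\}$ is syndetic. For even $n\in A$ we get both $d(f^n(y),x)<\eps$ and $d(f^n(z),x)<\eps$ from the opposite-sides observation. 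For odd $n$, note $f^n(y)\in J_1$ and $f^n(z)\in J_0$, so again they are on opposite sides and $n\in A$ forces $d(f^n(y),x)<\eps$ and $d(f^n(z),x)<\eps$. Therefore $A\subset\{n:d(f^n(y),x)<\eps\}\cap\{n:d(f^n(z),x)<\eps\}$ in all cases, and since $A$ is syndetic, both supersets are syndetic. As $\eps>0$ was arbitrary, $(y,x)\in\SProx(f)$ and $(x,z)\in\SProx(f)$.

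The main obstacle, and where I would spend the most care, is the claim that $f^n(y)$ and $f^n(z)$ genuinely sit on opposite sides of $x$ for every $n$ (equivalently, the strict alternation of halves). This uses that $x$ is the \emph{unique} fixed point and that $f$ permutes $\{J_0,J_1\}$ by swapping them; the only way the alternation can fail is if an iterate of $y$ or $z$ actually equals $x$, and that case (eventual fixedness) must be handled separately but is easy, since then the pair in question is already asymptotic. Once the opposite-sides structure is in hand, the syndeticity bookkeeping is the short computation sketched above: the single syndetic set $A$ witnessing $(y,z)\in\SProx(f)$ at level $\eps$ is simultaneously contained in the two sets witnessing $(y,x)\in\SProx(f)$ and $(x,z)\in\SProx(f)$ at level $\eps$, with no loss in the constant.
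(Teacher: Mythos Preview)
Your argument is correct and follows essentially the same route as the paper: the reverse implication comes from $\SProx(f)$ being an equivalence relation, and the forward implication from the observation that $f^n(y)$ and $f^n(z)$ lie on opposite sides of $x$ for every $n$, so that $|f^n(y)-f^n(z)|<\eps$ forces both $|f^n(y)-x|<\eps$ and $|f^n(z)-x|<\eps$. The paper compresses this into three lines and does not split into even/odd cases (the ``opposite sides'' inequality holds uniformly in $n$, so the parity distinction is unnecessary), nor does it discuss the degenerate case where an iterate hits $x$; your extra care there is harmless but not needed for the result.
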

\begin{proof}
Note that $f^n(y)$ and $f^n(z)$ should be on different sides of $x$ for all $n\in \N$. Therefore, for any $\epsilon>0$, we have $\{n\in \N:|f^n(y)-f^n(z)|<\epsilon\}\subset \{n\in \N:f^n(y),f^n(z)\in (x-\epsilon,x+\epsilon)\}$. This gives $(y,z)\in \SProx(f) \Longrightarrow(y,x), (x,z)\in \SProx(f)$. The other implication is clear since the syndetically proximal relation is an equivalence relation \cite{Clay}.
\end{proof}

\begin{prop}\label{uniMin}
Let  $(X,f)$ be a dynamical system and let $x\in \M(f)$.
Then $\overline{\orbp(x,f)}$ is the unique minimal set in the subsystem $(\overline{\orbp(y,f)}, f)$ for any $y\in \SProx(f)(x)$. In particular, if $f(x)=x$, then:
\begin{enumerate}[(i)]
\item $x$ is the only minimal point in $\overline{\orbp(y,f)}$ for any $y\in \SProx(f)(x)$,
\item $\SProx(f)(x)=\bigcup_{y\in \SProx(f)(x)} \overline{\orbp(y,f)}$.
\end{enumerate}
\end{prop}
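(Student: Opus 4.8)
The plan is to show first that $\overline{\orbp(x,f)}$ is contained in every nonempty closed invariant subset of $\overline{\orbp(y,f)}$, which immediately gives both that it is minimal there and that it is the unique minimal set. So fix $y\in \SProx(f)(x)$ and let $M\subset \overline{\orbp(y,f)}$ be a nonempty closed invariant set; I want to prove $\overline{\orbp(x,f)}\subset M$. Since $x$ is a minimal point, $\overline{\orbp(x,f)}$ is a minimal set, so it suffices to produce a single point of $\overline{\orbp(x,f)}$ inside $M$. Pick any $w\in M$. Because $w\in \overline{\orbp(y,f)}$, there is an increasing sequence $n_k$ with $f^{n_k}(y)\to w$. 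Passing to a subsequence, we may assume $f^{n_k}(x)\to x'$ for some $x'\in \overline{\orbp(x,f)}$, and $f^{n_k}(w')\to\cdots$ — more importantly, the syndetic proximality of $(x,y)$ forces, along a suitable subsequence, $f^{n_k}(x)$ and $f^{n_k}(y)$ to be arbitrarily close, so that $x'=w$. Hence $w\in \overline{\orbp(x,f)}$, and since $w\in M$ was arbitrary we even get more, but one such $w$ already yields $\overline{\orbp(x,f)}\subset M$ by minimality. Thus $\overline{\orbp(x,f)}$ is the unique minimal subset of $\overline{\orbp(y,f)}$.

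For the subtlety in the previous paragraph: to guarantee $f^{n_k}(x)\to w$ I should not take an arbitrary subsequence realizing $f^{n_k}(y)\to w$, but rather use that $\set{n:d(f^n(x),f^n(y))<\eps}$ is syndetic (in particular infinite) for every $\eps>0$. Given $w\in M$ and any neighborhood $U$ of $w$, the set $\set{n:f^n(y)\in U}$ meets every neighborhood of $w$ infinitely often along the orbit of $y$; intersecting this with the (co-syndetic-complement, hence infinite) set where $f^n(x)$ and $f^n(y)$ are within $\eps$, and letting $\eps\to 0$ and $U$ shrink, a diagonal argument produces $n_k$ with both $f^{n_k}(y)\to w$ and $d(f^{n_k}(x),f^{n_k}(y))\to 0$, whence $f^{n_k}(x)\to w$ as well, so $w\in\overline{\orbp(x,f)}$. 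This is the step I expect to require the most care, since it is where the syndetic hypothesis is actually used (mere proximality would only give that \emph{some} $w\in M$ lies in $\overline{\orbp(x,f)}$, which is in fact all we need, but the syndetic statement makes it clean and also prepares the ``In particular'' claims).

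Now assume $f(x)=x$, so $\overline{\orbp(x,f)}=\set{x}$. Claim (i) is the special case just proved: the unique minimal set in $\overline{\orbp(y,f)}$ is $\set{x}$, i.e. $x$ is the only minimal point there. For (ii), the inclusion $\SProx(f)(x)\supset\bigcup_{y\in\SProx(f)(x)}\overline{\orbp(y,f)}$ follows because $\SProx(f)$ is a closed, $(f\times f)$-invariant equivalence relation (closedness and invariance are routine; transitivity is \cite{Clay}): if $(x,y)\in\SProx(f)$ then $(x,f^n(y))=(f^n(x),f^n(y))\in\SProx(f)$ for all $n$, and taking closures and using that $\SProx(f)(x)$ is closed gives $\overline{\orbp(y,f)}\subset\SProx(f)(x)$. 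The reverse inclusion is trivial since $y\in\overline{\orbp(y,f)}$. I would remark that closedness of $\SProx(f)(x)$ when $x$ is fixed can be seen directly: $z\in\SProx(f)(x)$ iff for every $\eps>0$ the set $\set{n:d(f^n(z),x)<\eps}$ is syndetic, and a uniform-continuity/limiting argument shows this property passes to limits of $z$; alternatively it follows from the general structure results for $\SProx$ cited in the introduction. This completes the proof.
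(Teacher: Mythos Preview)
Your overall architecture is right (show that any closed invariant $M\subset\overline{\orbp(y,f)}$ contains $\overline{\orbp(x,f)}$ by producing a single common point), but two of the key steps fail.

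\textbf{The intersection step in the main argument is a genuine gap.} You pick an arbitrary $w\in M$ and try to find $n_k$ with $f^{n_k}(y)\to w$ and $d(f^{n_k}(x),f^{n_k}(y))\to 0$ simultaneously. The first condition only gives that $N(y,U)$ is infinite, and the second that $\{n:d(f^n(x),f^n(y))<\eps\}$ is syndetic; but an infinite set and a syndetic set need not meet (e.g.\ $2\N+1$ and $2\N$). Worse, the conclusion you aim for is actually false: with $M=\overline{\orbp(y,f)}$ your argument would give $\overline{\orbp(y,f)}\subset\overline{\orbp(x,f)}$, which already fails when $x$ is a fixed point and $y$ is non-minimal. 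The paper's fix is twofold. First, by uniform continuity the syndetically proximal set $\{n:d(f^n(x),f^n(y))<\eps\}$ is not merely syndetic but \emph{thickly syndetic}. Second, one does not take an arbitrary $w\in M$ but a \emph{minimal} point $z\in M$; then $N(y,U)$ is piecewise syndetic for every neighborhood $U$ of $z$ (a standard consequence of minimality of $z$ inside $\overline{\orbp(y,f)}$). Since thickly syndetic sets meet all piecewise syndetic sets, one now gets $z\in\overline{\orbp(x,f)}$, and minimality of $\overline{\orbp(x,f)}$ finishes the job. The paper also first shows $x\in\overline{\orbp(y,f)}$ by intersecting the syndetic set $\{n:d(x,f^n(x))<\eps\}$ with the thick set $\{n:d(f^n(x),f^n(y))<\eps\}$, which is the step guaranteeing $K\subset\overline{\orbp(y,f)}$.

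\textbf{The argument for (ii) fails because $\SProx(f)(x)$ is not closed.} Even for a fixed point $x$ this is false: in $(\Sigma_2^+,\sigma)$ with $x=0^\infty$, the points $1^k0^\infty$ are asymptotic to $x$ (hence in $\SProx(\sigma)(x)$) and converge to $1^\infty\notin\SProx(\sigma)(x)$. So your ``uniform-continuity/limiting argument'' cannot work; the syndetic bound on the gaps for $z_k$ may blow up as $k\to\infty$. The correct route to (ii) is via (i): if $z\in\overline{\orbp(y,f)}$ then $\overline{\orbp(z,f)}\subset\overline{\orbp(y,f)}$, so by (i) the unique minimal point of $\overline{\orbp(z,f)}$ is $x$. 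Now if $\{n:d(f^n(z),x)\ge\eps\}$ were thick, a limit of $f^{a_k}(z)$ along blocks $[a_k,a_k+k]$ contained in this set would produce a point whose orbit closure avoids $x$, contradicting uniqueness. Hence $\{n:d(f^n(z),x)<\eps\}$ is syndetic for every $\eps>0$, i.e.\ $z\in\SProx(f)(x)$.
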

\begin{proof}
Let $K=\overline{\orbp(x,f)}$ and $y\in \SProx(f)(x)$. Since $\{n\in \N : d(x,f^n(x))<\epsilon\}$ is syndetic and $\{n\in \N : d(f^n(x),f^n(y))<\eps\}$ is thick, we have $x\in \overline{\orbp(y,f)}$, and therefore $K\subset \overline{\orbp(y,f)}$. Now let $z\in \overline{\orbp(y,f)}\cap \M(f)$. Observe that
$\{n\in \N : d(z,f^n(y))<\eps\}$ is piecewise syndetic and $\set{n\in \N:d(f^n(y),K)<\eps}$ is thickly syndetic. This implies $z\in K$, since $\epsilon>0$ is arbitrary.
\end{proof}

From the two Propositions above, we conclude the following. If $f\colon [0,1]\to [0,1]$ is a transitive, non-mixing map with unique fixed point $x\in [0,1]$, then for a dense subset $S\subset [0,1]$ to be a syndetically scrambled set for $f$, it is necessary that $\overline{\orbp(y,f)}\cap (\M(f)\setminus \{x\})=\emptyset$ for every $y\in S$. This puts some restrictions on $S$. However, this restriction is not strong enough to prevent the existence of such a dense syndetically scrambled set $S$ for $f$. Below we describe the construction of $S$ involving a few preliminary steps. Roughly speaking, the basic idea is to first choose an infinite sequence of well placed subintervals with suitable covering relations among them, and then to use these relations to transfer a scrambled set of the shift map into $[0,1]$.

The following fact is well known and easy to prove (e.g. see \cite[Lemma~A.2]{Buzzi}).

\begin{lem}\label{lem:fixpoints}
Let $([0,1],f)$ be mixing and let $f^{-1}(0)=\set{0}$. Then there is a sequence of fixed points of $f$ in $(0,1)$ converging to 0.
\end{lem}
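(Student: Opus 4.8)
The plan is to argue by contradiction: assume the fixed points of $f$ do not accumulate at $0$, so there is $\delta\in(0,1)$ with no fixed point of $f$ in $(0,\delta)$, and derive a violation of transitivity (which is implied by mixing). First I would record the two immediate consequences of the hypothesis $f^{-1}(0)=\set{0}$: that $f(0)=0$, and that $f(x)>0$ for every $x\in(0,1]$ (since $f$ takes values in $[0,1]$). Then, since $x\mapsto f(x)-x$ is continuous, vanishes at $0$, and has no zero in the connected set $(0,\delta)$, it has constant sign there, which splits the argument into two cases.

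In the case $f(x)<x$ on $(0,\delta)$, I would note that $0<f(x)<x<\delta$ forces $f((0,\delta))\subseteq(0,\delta)$, hence $f([0,\delta])\subseteq[0,\delta]$ by continuity. In the case $f(x)>x$ on $(0,\delta)$, the key point is that $\mu:=\min_{x\in[\delta,1]}f(x)$ is strictly positive (this is exactly where $f^{-1}(0)=\set{0}$ is used, via $0\notin[\delta,1]$); if $\mu\ge\delta$ then $[\delta,1]$ is already forward invariant, and otherwise $0<\mu<\delta$, and the estimates $f(x)>x\ge\mu$ for $x\in[\mu,\delta)$ together with $f(x)\ge\mu$ for $x\in[\delta,1]$ and $f\le 1$ everywhere give $f([\mu,1])\subseteq[\mu,1]$. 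In each case I obtain a closed, $f$-invariant set which is neither empty nor all of $[0,1]$ and which has nonempty interior.

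Finally I would invoke the standard fact that a transitive self-map of $[0,1]$ admits no proper closed invariant set with nonempty interior: if $I$ is such a set, then for the nonempty open sets $U=\Int I$ and $V=[0,1]\setminus I$, transitivity yields $n$ with $f^n(U)\cap V\ne\emptyset$, contradicting $f^n(I)\subseteq I$. This contradiction finishes the proof.

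The only slightly delicate point — and the place I expect to spend the most care — is the positivity of $\mu$ and, more generally, making sure the invariant sets produced in the $f(x)>x$ case are genuinely proper: one must keep $\delta<1$ throughout and check that $\mu>0$ really does follow from $f^{-1}(0)=\set{0}$, which it does precisely because $\delta>0$ puts the compact set $[\delta,1]$ away from the unique zero of $f$. Everything else is elementary; no use of shadowing, expansiveness, or the finer structure of mixing is needed — plain transitivity suffices.
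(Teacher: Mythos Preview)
Your proof is correct. The contradiction argument is clean: the sign dichotomy for $f(x)-x$ on $(0,\delta)$, together with the positivity of $\mu=\min_{[\delta,1]}f$ coming from $f^{-1}(0)=\{0\}$, yields in each case a proper closed forward-invariant subinterval, which is incompatible with transitivity. The only place to be slightly more explicit is the endpoint $\delta$ in Case~1 (you need $f(\delta)\le\delta$, which follows by continuity from $f(x)<x$ on $(0,\delta)$), but you clearly have this in mind.

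As for comparison: the paper does not give its own proof of this lemma; it simply records it as well known and cites \cite[Lemma~A.2]{Buzzi}. Your argument is therefore a self-contained substitute for that citation, using nothing beyond transitivity and the intermediate value theorem.
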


For a continuous map $f\colon [0,1]\to [0,1]$, we introduce some notations to express the covering relations of subintervals. For $K,L\subset [0,1]$, write $K \Longrightarrow L$ if $L\subset f(K)$, and $K \stackrel{f^n} \Longrightarrow L$ if $L\subset f^n(K)$. Obviously, if $J_0 \Longrightarrow J_1 \Longrightarrow \ldots \Longrightarrow J_n$ then there is
$x\in J_0$ such that $f^i(x)\in J_i$ for $i=0,1, \ldots,n$. Also, let $K \stackrel{s,k}{\dashrightarrow} L$ denote the fact that there is a sequence $J_0, J_1,\ldots ,J_{k}$ of subintervals of $[0,1/s]$ such that $J_0=K$, $J_k=L$, and $
K \Longrightarrow J_1 \Longrightarrow \ldots \Longrightarrow J_{k-1} \Longrightarrow L$.

\begin{lem}\label{lem:chains}
Let $([0,1],f)$ be mixing and let $f^{-1}(0)=\set{0}$. Then there exists a sequence of non-degenerate closed intervals $(L_n)_{n=2}^\infty$ such that:

\begin{enumerate}[(i)]
\item\label{L75:i} $L_n\subset (0,1/n]$ and $L_n \Longrightarrow L_n$.

\item\label{L75:ii} There is $k_n\in \mathbb N$ such that $L_n \stackrel{n,k_n}{\dashrightarrow} L_{n+1}$ and $L_{n+1} \stackrel{n,k_n}{\dashrightarrow} L_n$.

\item\label{L75:iii} There are nondegenerate closed and disjoint intervals $H_0,H_1\subset (0,1)$ and $k_1\in \N$ such that $L_2 \stackrel{f^{k_1}}{\Longrightarrow} H_i$ and $H_i \stackrel{f^{k_1}}{\Longrightarrow} L_2$ for $i=0,1$.
\end{enumerate}
\end{lem}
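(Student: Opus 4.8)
The plan is to build the intervals $L_n$ around a sequence of fixed points that accumulate at $0$, read off part (i) from connectedness, obtain the horseshoe-type relations in part (iii) directly from the fact that mixing interval maps are locally eventually onto, and concentrate the real work on the covering chains in part (ii). First I would recall two standard facts about a mixing interval map: it is locally eventually onto (for every nonempty open $U$ there is $N$ with $f^N(U)=[0,1]$, and since $f$ is surjective $f^n(U)=[0,1]$ for all $n\ge N$), and, by transitivity, no proper closed subinterval $J\subsetneq[0,1]$ satisfies $f(J)\subseteq J$. Applying Lemma~\ref{lem:fixpoints} I would fix a strictly decreasing sequence $p_1>p_2>\cdots\to 0$ of fixed points of $f$ with $p_n\le 1/n$, and set $L_n:=[p_{n+1},p_n]\subseteq(0,1/n]$. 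Since $f(L_n)$ is connected and contains $f(p_{n+1})=p_{n+1}$ and $f(p_n)=p_n$, it contains $[p_{n+1},p_n]=L_n$, which is $L_n\Longrightarrow L_n$; this is (i). For (iii) I would take any two disjoint nondegenerate closed intervals $H_0,H_1\subset(0,1)$ and, using local eventual ontoness, a single $k_1$ with $f^{k_1}(\Int L_2)=f^{k_1}(\Int H_0)=f^{k_1}(\Int H_1)=[0,1]$; then $f^{k_1}(L_2)=[0,1]\supseteq H_i$ and $f^{k_1}(H_i)=[0,1]\supseteq L_2$, which is exactly (iii).

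The heart of the matter is (ii). The key observations are that the forward images $f^j(L_n)$ form an increasing sequence of closed intervals (apply $f$ to $L_n\subseteq f(L_n)$), that none of them contains $0$ (as $f^{-1}(0)=\{0\}$ and $0\notin L_n$, so each left endpoint stays strictly positive), and that they cannot remain inside $[0,1/n]$ for all $j$ (otherwise $\overline{\bigcup_j f^j(L_n)}$ would be a proper $f$-invariant subinterval, contradicting transitivity). Letting $K=K(n)$ be the first $j$ with $f^j(L_n)\not\subseteq[0,1/n]$, the sequence $L_n=f^0(L_n)\Longrightarrow f^1(L_n)\Longrightarrow\cdots\Longrightarrow f^{K-1}(L_n)$ is a covering chain lying entirely in $[0,1/n]$, and $f^{K}(L_n)=[\mu,M]$ with $0<\mu\le p_{n+1}$ and $M>1/n\ge p_n$. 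I would then choose the next fixed point $p_{n+2}$ (which defines $L_{n+1}=[p_{n+2},p_{n+1}]$) inside the window $[\mu,p_{n+1})\cap(0,1/(n+2)]$, so that $f^{K}(L_n)\supseteq[\mu,p_n]\supseteq L_{n+1}$ and hence $L_n\Longrightarrow\cdots\Longrightarrow f^{K-1}(L_n)\Longrightarrow L_{n+1}$ realizes $L_n\stackrel{n,K}{\dashrightarrow}L_{n+1}$. The reverse chain then comes essentially for free: if $K'$ is the first $j$ with $f^j(L_{n+1})\not\subseteq[0,1/n]$, then $f^{K'}(L_{n+1})=[\mu',M']$ with $\mu'\le p_{n+2}<p_{n+1}$ and $M'>1/n\ge p_n$, so $f^{K'}(L_{n+1})\supseteq[\mu',p_n]\supseteq[p_{n+1},p_n]=L_n$, giving $L_{n+1}\stackrel{n,K'}{\dashrightarrow}L_n$. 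Finally, using $L_n\Longrightarrow L_n$ and $L_{n+1}\Longrightarrow L_{n+1}$ I would pad the shorter of the two chains with self-loops so that both attain the common length $k_n:=\max\{K(n),K'(n)\}$.

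The main obstacle is that the window $[\mu,p_{n+1})\cap(0,1/(n+2)]$ used to place $p_{n+2}$ need not contain a fixed point (indeed need not be nonempty) when the first escaping image $f^{K(n)}(L_n)$ protrudes on the right while its left endpoint $\mu$ is still close to $p_{n+1}$, in particular when $\mu>1/(n+2)$. Handling this will require continuing the chain beyond step $K(n)$ using truncated intervals $[\mu^{(k)},1/n]$ and arguing that their left endpoints $\mu^{(k)}$ must eventually descend below $1/(n+2)$; this is exactly where Lemma~\ref{lem:fixpoints} (which forces fixed points, hence ``down-type'' oscillation of the graph of $f$, arbitrarily close to $0$) and transitivity (which forbids the descent from stalling at any positive level, since that would again produce a proper forward-invariant subinterval) must be combined, together with an inductive choice of the $p_n$. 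Making this descent argument precise, and checking along the way that the truncated intervals still form a legitimate covering chain, is the technical core of the proof; the remaining bookkeeping is routine.
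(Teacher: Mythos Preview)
Your approach differs from the paper's: you set $L_n=[p_{n+1},p_n]$ with both endpoints fixed and argue with forward images of intervals, whereas the paper takes $L_n$ to be a proper subinterval of $[p_{n+1},p_n]$ with only the right endpoint equal to $p_n$, and builds the chain by tracking a single orbit. Your treatment of (i), (iii), and the reverse chain $L_{n+1}\stackrel{n,K'}{\dashrightarrow}L_n$ is fine.

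The gap is in your proposed fix for the forward chain $L_n\stackrel{n,\cdot}{\dashrightarrow}L_{n+1}$. The assertion that ``transitivity forbids the descent from stalling at any positive level, since that would again produce a proper forward-invariant subinterval'' is incorrect. Suppose your truncated left endpoints stall at some $\ell^*>0$, i.e.\ $\min f([\ell^*,1/n])=\ell^*$. This does \emph{not} make $[\ell^*,1/n]$ (or $[\ell^*,p_n]$) forward-invariant under $f$: the image $f([\ell^*,1/n])$ may well extend past $1/n$ to the right, and it is precisely by first escaping on the right that later iterates of $f$ eventually produce points below $\ell^*$. Nothing in mixing or in Lemma~\ref{lem:fixpoints} forces the left endpoint of $f^j(L_n)$ to drop below $1/(n+2)$ before the right endpoint leaves $[0,1/n]$, and once you truncate you have thrown away exactly the mechanism that drives the descent. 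So the window $[\mu,p_{n+1})\cap(0,1/(n+2)]$ may be empty (or contain no fixed point) no matter how long you run the truncated chain, and the inductive choice of $p_{n+2}$ can fail.

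The paper avoids this by first thinning the fixed points so that $f([p_n,1])\cap[0,p_{n+1}]=\emptyset$, and then, for each $n$, using transitivity to pick a point $x\in(p_n,1]$ with $f^k(x)\in(0,p_{n+2})$ and all intermediate iterates in $[0,p_n]$; the intervals $[f^i(x),p_n]$ (right endpoint pinned at the fixed point $p_n$) then form the required covering chain inside $[0,1/n]$, and $L_n$ is taken to be the first of these intervals after a small adjustment ensuring $L_n\Longrightarrow L_n$. This orbit-based construction sidesteps the escape/descent race entirely.
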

\begin{proof}
Let $(p_n)_{n=2}^\infty$ be a decreasing sequence of fixed points of $f$ in $(0,1)$ with $\lim_{n\ra\infty}p_n=0$ provided by Lemma~\ref{lem:fixpoints}.
Without loss of generality (removing some points from this sequence if necessary) we may assume that $p_n<1/n$ and $f([p_n,1])\cap [0,p_{n+1}]=\emptyset$ for every $n\ge 2$.

Fix any $n\geq 2$. Since $f$ is transitive, there is $x\in (p_n,1]$ and $k\in \N$ such that $f^k(x)\in (0,p_{n+2})$. We may also assume that $f^i(x)\notin (p_n,1]$ for $0<i<k$. Obviously $f^i(x)\neq p_n$ for every $i\leq k$.
Note that $f(x)\in (p_{n+1},p_n)$ and thus $[f(x),p_n]\subset [0,1/n]$ is a non-degenerate interval.
Additionally each $[f^i(x),p_n]$ is non-degenerate interval and thus we have a natural sequence of coverings
$$
[f(x),p_n] \Longrightarrow [f^2(x),p_n] \Longrightarrow \ldots \Longrightarrow [f^k(x),p_n]\supset [p_{n+2},p_{n+1}].
$$
Removing a few first intervals from the above sequence of coverings if necessary, without loss of generality we may assume that $[f(x),p_n]$ is the only interval in this sequence
fully contained in $[p_{n+1},p_n]$. After this modification the sequence constitutes of at least two intervals (so at least one covering relation is still there). Then it is enough to put $L_n=[f(x),p_n]$ since our modification ensures that $L_n \Longrightarrow L_n$.
We can perform the above construction for any $n$, thus each $L_n\subset [p_{n+1},p_n]$ is well defined. But by the method of construction, we get that $L_n \stackrel{n,k_n}{\dashrightarrow} L_{n+1}$.

Fix $n\ge 2$, and let $m\in \N$ be the smallest integer such that $f^m(L_{n+1})\cap (p_n,1]\ne \emptyset$, then $f^j(L_{n+1})\subset [0,p_n]\subset [0,1/n]$ for $0\le j<m$. Since $p_{n+1}\in L_{n+1}$ is a fixed point, $L_n\subset [p_{n+1},p_n]\subset f^m(L_{n+1})$. Thus for each $n\ge 2$, there is $m_n\in \mathbb N$ such that $L_{n+1} \stackrel{n,m_n}{\dashrightarrow} L_n$.

Since $L_n \Longrightarrow L_n$ for every $n\ge 2$, we may assume $k_n=m_n$ after inserting either a certain number of copies of $L_n$ into the beginning of the covering relation $L_n \stackrel{n,k_n}{\dashrightarrow} L_{n+1}$, or a certain number of copies of $L_{n+1}$ into the beginning of the covering relation $L_{n+1} \stackrel{n,m_n}{\dashrightarrow} L_n$. We have just proved \eqref{L75:i} and \eqref{L75:ii}.

Let $H_0,H_1\subset (0,1)$ be any two non-degenerate, disjoint closed intervals. By mixing and the fact that $L_2\subset (0,1)$, there is $k_1\in \mathbb N$ such that $L_2 \stackrel{f^{k_1}}{\Longrightarrow} H_i$ and $H_i \stackrel{f^{k_1}}{\Longrightarrow} L_2$ for $i=0,1$ which completes the proof.
\end{proof}

\begin{lem}\label{lem:chains2ex}
Let $([0,1],f)$ be mixing and let $f^{-1}(\set{0})\supset \set{0,x}$ for some $x\in (0,1]$. Then there exists a nested sequence of non-degenerate closed intervals $L_2 \supset L_3 \supset \ldots$ such that:
\begin{enumerate}[(i)]
\item\label{L76:i} $L_n\subset [0,1/n]$ and $L_n \Longrightarrow L_n$,

\item\label{L76:ii} there is $k_n\in \mathbb N$ such that $L_n \stackrel{n,k_n}{\dashrightarrow} L_{n+1}$ and $L_{n+1} \stackrel{n,k_n}{\dashrightarrow} L_n$.

\item\label{L76:iii} There are nondegenerate closed and disjoint intervals $H_0,H_1\subset (0,1)$ and $k_1\in \N$ such that $L_2 \stackrel{f^{k_1}}{\Longrightarrow} H_i$ and $H_i \stackrel{f^{k_1}}{\Longrightarrow} L_2$ for $i=0,1$.
\end{enumerate}
\end{lem}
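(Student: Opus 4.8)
The plan is to look for intervals of the form $L_n=[0,b_n]$, where $b_2>b_3>\cdots>0$ is a sequence to be chosen with $b_n<1/n$ and, crucially, $[0,b_n]\subseteq f([0,b_n])$. Such a choice automatically makes $L_2\supset L_3\supset\cdots$, makes every $L_n$ nondegenerate, and establishes \eqref{L76:i}. So the first thing I would prove is: \emph{for every $\eps>0$ there is $b\in(0,\eps)$ with $[0,b]\subseteq f([0,b])$.} Since $0\in f^{-1}(\{0\})$ we have $f(0)=0$, so $f([0,b])$ is a subinterval of $[0,1]$ whose minimum is $0$; writing $f([0,b])=[0,h(b)]$ with $h(b)=\max_{t\in[0,b]}f(t)$, the failure of ``$[0,b]\subseteq f([0,b])$'' amounts to $h(b)<b$. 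If the assertion were false there would be some $\eps\in(0,2)$ with $h(b)<b$ for all $b\in(0,\eps)$; but then $f([0,\eps/2])=[0,h(\eps/2)]\subsetneq[0,\eps/2]$, so $[0,\eps/2]$ is a proper forward-invariant subinterval with nonempty interior, and taking nonempty open $U\subset(0,\eps/2)$, $V\subset(\eps/2,1)$ we get $N(U,V)=\emptyset$, contradicting mixing. With the assertion available, I would pick $b_2,b_3,\dots$ one by one.

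Next I would check \eqref{L76:ii}. The ``downward'' chain is almost free: $f(L_n)=[0,h(b_n)]\supseteq[0,b_n]\supseteq L_{n+1}$, so $L_n\Longrightarrow L_{n+1}$, giving $L_n\stackrel{n,1}{\dashrightarrow}L_{n+1}$ (both intervals lie in $[0,1/n]$), and this chain can be lengthened to any prescribed length by prepending copies of $L_n$ and using $L_n\Longrightarrow L_n$. For the ``upward'' chain I would use that $L_{n+1}\Longrightarrow L_{n+1}$ forces the images $f^j(L_{n+1})$ to be an increasing nested family of intervals with minimum $0$: write $f^j(L_{n+1})=[0,g_j]$, so $b_{n+1}=g_0\le g_1\le g_2\le\cdots$, and $g_j=1$ for all large $j$ since a nondegenerate interval is eventually mapped onto $[0,1]$ by a mixing interval map. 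Let $m_n$ be least with $g_{m_n}\ge b_n$; then $g_j<b_n<1/n$ for $0\le j<m_n$, so $[0,g_0]\Longrightarrow[0,g_1]\Longrightarrow\cdots\Longrightarrow[0,g_{m_n-1}]\Longrightarrow[0,b_n]$ is a chain of length $m_n$ contained in $[0,1/n]$, i.e. $L_{n+1}\stackrel{n,m_n}{\dashrightarrow}L_n$. Finally I set $k_n=m_n$ and pad the downward chain with $m_n-1$ self-loops of $L_n$, so that both chains have the common length $k_n$.

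For \eqref{L76:iii} I would simply fix two disjoint nondegenerate closed intervals $H_0,H_1\subset(0,1)$ and invoke mixing once more: there is $k_1\in\N$ with $f^{k_1}(I)=[0,1]$ for each $I\in\{L_2,H_0,H_1\}$, whence $L_2\stackrel{f^{k_1}}{\Longrightarrow}H_i$ and $H_i\stackrel{f^{k_1}}{\Longrightarrow}L_2$ for $i=0,1$. Here it matters that $f^{k_1}(H_i)$ is genuinely all of $[0,1]$, because $L_2=[0,b_2]$ contains the point $0$ (unlike in Lemma~\ref{lem:chains}, where $L_2$ stays away from the endpoint).

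The only step I expect to need care is \eqref{L76:ii}: the upward chain from $L_{n+1}$ to $L_n$ has to remain inside $[0,1/n]$ \emph{and} have a length that can be matched to the easy downward chain. The observation that settles both points simultaneously is that self-covering of $L_{n+1}$ turns the forward iterates $f^j(L_{n+1})$ into an increasing nested sequence of subintervals of $[0,1]$ all anchored at $0$, so one can follow them up until the first instant they engulf $L_n$ --- every earlier iterate being strictly inside $[0,b_n]\subseteq[0,1/n]$ --- and then repair the length discrepancy with self-loops.
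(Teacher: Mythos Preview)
Your arguments for \eqref{L76:i} and \eqref{L76:ii} are correct and essentially match the paper's. (In \eqref{L76:ii} you assert $g_j=1$ eventually; this need not hold if $f^{-1}(1)=\{1\}$, but it is harmless since you only use $g_j\ge b_n$ eventually, which mixing does give.)

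There is, however, a genuine gap in \eqref{L76:iii}. You fix $H_0,H_1\subset(0,1)$ arbitrarily and then assert that ``a nondegenerate interval is eventually mapped onto $[0,1]$ by a mixing interval map'', so that $f^{k_1}(H_i)=[0,1]\supset L_2$. This assertion is false: mixing gives only $f^n(J)\supset[\eps,1-\eps]$ for every $\eps>0$ and large $n$, not $f^n(J)=[0,1]$. Indeed, if $f^{-1}(0)=\{0\}$ --- precisely the complementary case treated in Lemma~\ref{lem:chains} --- then $0\notin f^n(H_i)$ for any $n$, since $H_i\subset(0,1)$. You yourself flag that ``it matters that $f^{k_1}(H_i)$ is genuinely all of $[0,1]$, because $L_2=[0,b_2]$ contains the point $0$'', yet you never invoke the one hypothesis that makes this possible, namely $f^{-1}(\{0\})\supset\{0,x\}$ with $x\in(0,1]$. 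That hypothesis is exactly what separates this lemma from Lemma~\ref{lem:chains}, and your proof never uses it.

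The fix is short: since $f$ is onto and $f(0)=f(x)=0$, one can pull $0$ back along $x$ to find points $y\in(0,1)$ with $f^m(y)=0$ for some $m$; mixing then puts $y$ inside $f^n(H_i)$ for large $n$, hence $0\in f^{n+m}(H_i)$, and since $f^{n+m}(H_i)$ is an interval already containing $[\eps,1-\eps]$, it contains $L_2$. The paper does essentially this: it chooses $y_1,y_2,y_3\in(0,1]$ with $y_3\mapsto y_2\mapsto y_1\mapsto 0$, notes that at least two of them lie in $(0,1)$, and builds $H_0,H_1$ around those two points so that $0\in f^k(H_0)\cap f^k(H_1)$ for all $k>3$.
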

\begin{proof}
For $n\ge 2$, we are going to define certain intervals $L_n=[0,b_n]\subset [0,1/n]$ inductively. By transitivity, there is $b_2\in (0,1/2)$ with $f(b_2)>b_2$. Let $L_2=[0,b_2]$ and note that $L_2\subset f(L_2)$. Next, assume that we have defined $L_j=[0,b_j]$ for $j\le n$. By transitivity, we can find $b_{n+1}\in (0, \min \set{b_n,\,1/(n+1)})$ such that $f(b_{n+1})>b_{n+1}$. Let $L_{n+1}=[0,b_{n+1}]$. Observe that $L_{n+1}\subset L_n\subset f(L_n)$ and that $L_{n+1}\subset f(L_{n+1})$. In particular, $L_n \stackrel{n,k}{\dashrightarrow} L_{n+1}$ for every $k\in \mathbb N$.

Fix $n\ge 2$, and let $k\in \N$ be the smallest integer such that $L_n\subset f^{k}(L_{n+1})$. Note that such a number $k$ must exist by transitivity of $f$ and the connectedness of $L_n$, since $f(0)=0$ and $f^s(L_{n+1})\cap (1/2,1)\neq \emptyset$ for some $s\in \N$. Additionally, for any $i$ there is $c$ such that $f^i(L_{n+1})=[0,c]$ and hence $f^i(L_{n+1})\subset [0,1/n]$ for $0\le i<k$ by the definition of $k$. Thus we have $L_{n+1}\Longrightarrow f(L_{n+1})\Longrightarrow \ldots \Longrightarrow f^{k-1}(L_{n+1})\Longrightarrow L_n$ where by the definition, all these intervals are contained in $[0,1/n]$. Thus $L_{n+1} \stackrel{n,k}{\dashrightarrow} L_n$ and so conditions \eqref{L76:i} and \eqref{L76:ii} are satisfied.

Now, we will prove \eqref{L76:iii}. Since $f^{-1}(\set{0})\cap (0,1]\neq \emptyset$ and $f$ is surjective, there are points $y_1,y_2,y_3\in (0,1]$ such that
$y_3\mapsto y_2\mapsto y_1\mapsto 0$ under action of $f$. Since at most one $y_i$ can be equal to $1$, there are $i\neq j$ such that $y_i,y_j\in (0,1)$.
Then we can easily find two nondegenerate closed and disjoint intervals $H_0,H_1\subset (0,1)$ such that $y_i\in H_0$ and $y_j\in H_1$.
Note that $0=f^k(y_i)=f^k(y_j)\in f^k(H_0)\cap f^k(H_1)$ for every integer $k>3$. Therefore, by mixing and connectedness, we can find an integer $k_1\in \N$ such that $[0,1/2]\subset f^k(H_0)\cap f^k(H_1)$ for every $k\geq k_1$. We may also assume that $f^{k_1}(L_2)$ intersects all three connected components of the set $[0,1]\setminus (H_0\cup H_1)$.
Indeed, $L_2 \stackrel{f^{k_1}}{\Longrightarrow} H_0 \stackrel{f^{k_1}}{\Longrightarrow} L_2$ and $L_2 \stackrel{f^{k_1}}{\Longrightarrow} H_1 \stackrel{f^{k_1}}{\Longrightarrow} L_2$ and so the proof is completed.
\end{proof}

\begin{thm}\label{thm:mixna}
Let $([0,1],f)$ be a mixing dynamical system such that $f(0)=0$.
Then there exist $\eps>0$ and a dense Mycielski set $S\subset [0,1]$ such that $S\cup \{0\}$ is a syndetically $\eps$-scrambled set for $f$.
\end{thm}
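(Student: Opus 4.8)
The plan is to produce, for each ``code'' $\mathbf a$ in a suitable Cantor set $\mathcal A\subset\{0,1\}^{\N}$, a point $\xi(\mathbf a)\in[0,1]$ whose $f$-orbit alternates between long plunges deep into the intervals $L_n$ furnished by Lemma~\ref{lem:chains} (this forces syndetic proximality to $0$) and short detours through one of the two disjoint intervals $H_0,H_1$ (this forces non-asymptoticity with a uniform gap), and then to upgrade the resulting uncountable set to a dense Mycielski set via Lemma~\ref{cdense}. To set up the skeleton, split into cases: if $f^{-1}(0)=\{0\}$ apply Lemma~\ref{lem:chains}; otherwise $f(x)=0$ for some $x\in(0,1]$ and Lemma~\ref{lem:chains2ex} applies. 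Either way one obtains non-degenerate closed intervals $L_n\subset[0,1/n]$, $n\ge 2$, with $L_n\Longrightarrow L_n$, integers $k_n$ with $L_n\stackrel{n,k_n}{\dashrightarrow}L_{n+1}$ and $L_{n+1}\stackrel{n,k_n}{\dashrightarrow}L_n$, and disjoint non-degenerate closed intervals $H_0,H_1\subset(0,1)$ with $k_1\in\N$ such that $L_2\stackrel{f^{k_1}}{\Longrightarrow}H_i$ and $H_i\stackrel{f^{k_1}}{\Longrightarrow}L_2$ for $i=0,1$. Set $\eps=\min\{\dist(0,H_0\cup H_1),\dist(H_0,H_1)\}>0$.

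\textbf{The itinerary and the map $\xi$.} Fix an increasing sequence $m_k\to\infty$ with $m_1=2$. Block $k$ of the itinerary of $\mathbf a=a_1a_2\cdots$ consists of the descending chain $L_2\dashrightarrow L_3\dashrightarrow\cdots\dashrightarrow L_{m_k}$, the ascending chain $L_{m_k}\dashrightarrow\cdots\dashrightarrow L_2$, and the \emph{signal} $L_2\stackrel{f^{k_1}}{\Longrightarrow}H_{a_k}\stackrel{f^{k_1}}{\Longrightarrow}L_2$. Only the signal depends on $\mathbf a$, and because visiting $H_0$ and $H_1$ each costs $2k_1$ steps, the length of block $k$, and hence the time $t_k$ at which block $k$ ends, do not depend on $\mathbf a$. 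Using the standard fact that a finite chain of covering relations $J_0\Longrightarrow\cdots\Longrightarrow J_N$ is witnessed by a closed subinterval of $J_0$ mapped onto $J_N$ by $f^N$, one gets a nested sequence of nonempty closed subsets of $L_2$; let $\xi(\mathbf a)$ be the left endpoint of their intersection, so that $f^n(\xi(\mathbf a))$ follows the prescribed intervals for all $n$. If $\mathbf a\ne\mathbf b$ and $a_k\ne b_k$, then at the signalling time of block $k$ the two orbits lie in the disjoint sets $H_{a_k}$, $H_{b_k}$, so $\xi$ is injective; moreover $\xi$ is a pointwise limit of maps constant on cylinders, hence Borel, so $S_0:=\xi(\mathcal A)$ is uncountable and analytic. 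Take $\mathcal A$ to be a Cantor set any two distinct members of which differ in infinitely many coordinates, e.g. a Cantor scrambled set for $\sigma$.

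\textbf{Scrambling.} Fix $\delta>0$ and $m$ with $1/m<\delta$. For every block $k$ with $m_k>m$, the orbit of $\xi(\mathbf a)$ stays inside $\bigcup_{j\ge m}L_j\subset[0,\delta)$ throughout block $k$ except for a run straddling the signal of length at most $C_m:=2\sum_{j=2}^{m-1}k_j+2k_1$, a constant depending only on $m$; together with the finitely many initial blocks with $m_k\le m$ this shows $\{n:|f^n(\xi(\mathbf a))|<\delta\}$ has bounded gaps, hence is syndetic, so $(\xi(\mathbf a),0)\in\SProx(f)$; and since the orbit meets $H_{a_k}$ for every $k$, $\limsup_n|f^n(\xi(\mathbf a))|\ge\dist(0,H_0\cup H_1)\ge\eps$, so $(\xi(\mathbf a),0)\notin\Asy(f)$. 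For distinct $\mathbf a,\mathbf b\in\mathcal A$ the two orbits traverse the same prescribed intervals at the same times outside the signalling steps, so the same estimate gives that $\{n:|f^n(\xi(\mathbf a))-f^n(\xi(\mathbf b))|<\delta\}$ is syndetic; and since $a_k\ne b_k$ for infinitely many $k$, at those signalling times the orbits lie in $H_0$ and $H_1$, whence $\limsup_n|f^n(\xi(\mathbf a))-f^n(\xi(\mathbf b))|\ge\dist(H_0,H_1)\ge\eps$. Thus $S_0\cup\{0\}$ is syndetically $\eps$-scrambled.

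\textbf{Density, and the main obstacle.} As $S_0$ is uncountable and analytic it contains a Cantor set $K$, and $K\cup\{0\}$ is again syndetically $\eps$-scrambled. A mixing interval map is locally eventually onto, so every nonempty open $U\subset[0,1]$ satisfies $f^n(U)=[0,1]\supset f^n(K)$ for some $n$; hence Lemma~\ref{cdense} applies to $K$ and yields a dense Mycielski syndetically $\eps$-scrambled set $T$, each point of which is eventually orbit-equal to a point of $K\subset S_0$ and is therefore syndetically proximal but not asymptotic to $0$. Consequently $S:=T$ is a dense Mycielski set with $S\cup\{0\}$ syndetically $\eps$-scrambled, as required. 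The one genuinely delicate point is the estimate in the previous paragraph: obtaining \emph{syndetic} rather than merely $\liminf$-proximity forces every detour of the orbit away from the deep levels to have length bounded by a constant depending only on the precision $\delta$, and this is exactly why each signalling detour is arranged to descend only as far as $L_2$ and back, so that its cost for precision $1/m$ is $2\sum_{j=2}^{m-1}k_j+O(1)$, independent of how deep (how large $m_k$) the surrounding block reaches.
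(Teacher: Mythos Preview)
Your overall strategy mirrors the paper's almost exactly: same case split via Lemmas~\ref{lem:chains}/\ref{lem:chains2ex}, same idea of interleaving ever-deeper descents $L_2\dashrightarrow\cdots\dashrightarrow L_m\dashrightarrow\cdots\dashrightarrow L_2$ with $H_0/H_1$ ``signals'' read off a scrambled sequence, same passage to a Cantor set and then Lemma~\ref{cdense} for density. The minor packaging differences (your depths $m_k\to\infty$ versus the paper's $m_k=k+1$; your direct Borel map $\xi$ versus the paper's continuous $\psi\colon D\to\Gamma$ plus a Borel section; your direct pairwise syndeticity versus the paper's appeal to $\SProx$ being an equivalence relation) are harmless.

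There is, however, one genuine error in the density step. You assert that ``a mixing interval map is locally eventually onto, so every nonempty open $U$ satisfies $f^n(U)=[0,1]$ for some $n$.'' This is false precisely in the case handled by Lemma~\ref{lem:chains}: if $f^{-1}(0)=\{0\}$ then $f^{-n}(0)=\{0\}$ for all $n$, so any open $U$ with $0\notin U$ has $0\notin f^n(U)$ for every $n$, and $f$ is \emph{not} exact. What mixing does give (via connectedness of $f^n(U)$) is that for every $\delta>0$ one has $f^n(U)\supset[\delta,1-\delta]$ for all large $n$. This weaker fact is exactly what the paper uses, and it suffices: at each signalling time the whole set $K$ lands in $H_0\cup H_1\subset(\eps',1-\eps')$ for some $\eps'>0$, so for any nonempty open $U$ there is $n$ with $f^n(K)\subset H_0\cup H_1\subset f^n(U)$, and Lemma~\ref{cdense} applies. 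Replace your exactness claim by this argument and the proof goes through.
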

\begin{proof}
Let intervals $L_n$,$H_0,H_1$ and integers $k_n$ be provided by Lemma~\ref{lem:chains} or Lemma~\ref{lem:chains2ex}, when $f^{-1}(0)=\set{0}$
or $f^{-1}(0)\supsetneq \set{0}$, respectively.

Now fix any $\alpha\in \Sigma_2$. For this $\alpha$ we can build an infinite sequence of covering relations as follows
\begin{eqnarray*}
&&H_{\alpha_0} \stackrel{f^{k_1}}{\Longrightarrow} L_2 \stackrel{2,k_2}{\dashrightarrow} L_{3} \stackrel{2,k_2}{\dashrightarrow} L_2 \stackrel{f^{k_1}}{\Longrightarrow} \\
&&H_{\alpha_1} \stackrel{f^{k_1}}{\Longrightarrow} L_2 \stackrel{2,k_2}{\dashrightarrow} L_{3} \stackrel{3,k_3}{\dashrightarrow} L_4 \stackrel{3,k_3}{\dashrightarrow}  L_3 \stackrel{2,k_2}{\dashrightarrow} L_2 \stackrel{f^{k_1}}{\Longrightarrow} \\
&&\ldots\\
&&H_{\alpha_{n}} \stackrel{f^{k_1}}{\Longrightarrow} L_2 \stackrel{2,k_2}{\dashrightarrow} L_{3} \stackrel{3,k_3}{\dashrightarrow} \ldots \stackrel{n,k_n}{\dashrightarrow} L_{n+1} \stackrel{n,k_n}{\dashrightarrow}  \ldots \stackrel{3,k_3}{\dashrightarrow}  L_3 \stackrel{2,k_2}{\dashrightarrow} L_2 \stackrel{f^{k_1}}{\Longrightarrow} \\
&&\ldots
\end{eqnarray*}
There is at least one point $b\in [0,1]$ that visits all the intervals building the above sequence under iteration as per the covering relation. If $b$ is one such point, let us say for the reminder of this proof that \emph{$b$ has symbolic coding $\alpha$.} Let $\Gamma\subset \Sigma_2$ be a Cantor scrambled set for $\sigma$, and let $D=\{b\in [0,1]:\text{$b$ has symbolic coding $\alpha$ for some }\alpha\in \Gamma\}$. Then it may be verified that $D$ is a compact set. There is a natural continuous surjection $\psi\colon D\to \Gamma$ which sends $b\in D$ to its symbolic coding $\alpha\in \Gamma$. By \cite[Remark 4.3.6]{Sriv}, there is a Cantor set $K\subset D$ such that $\psi|_K$ is injective. Then $\psi$ is a homeomorphism from $K$ onto $\Gamma':=\psi(K)$. Write $K=\{b_\alpha:\alpha\in \Gamma'\}$, where $\psi(b_\alpha)=\alpha$.

Now our aim is to check that $K$ is a syndetically $\eps$-scrambled set for $f$ for some $\eps>0$. Consider $b_\alpha\in K$ and $j\in \mathbb N$. First note that if $f^j(b_\alpha)\not\in [0,1/n)$, then $f^j(b_\alpha)$ must be contained in one of the intervals from the following two pieces of covering relations:
$H_{\alpha_{s}} \stackrel{f^{k_1}}{\Longrightarrow}L_2\stackrel{2,k_2}{\dashrightarrow} \ldots \stackrel{n,k_n}{\dashrightarrow} L_{n+1}$, or $L_{n+1} \stackrel{n,k_n}{\dashrightarrow} \ldots \stackrel{2,k_2}{\dashrightarrow} L_2\stackrel{f^{k_1}}{\Longrightarrow} H_{\alpha_{t}}$. This is because all other intervals in our infinite sequence of covering relations
are contained in $[0,\tfrac{1}{n+1}]$. In our infinite sequence of covering relations, the admissible combinations of the two pieces given above have bounded length since the interval $L_{n+2}$ should appear between two occurrences of $L_n$ or $H_0,H_1$. This shows that the set $\set{j: f^j(b_\alpha)\in [1/n,1]}$ is not thick, which in other words means that $\set{j: f^j(b_\alpha)\in [0,1/n)}$ is syndetic. Hence $(0,b_\alpha)\in \SProx(f)$. Since syndetically proximal relation is an equivalence relation (e.g. see \cite{Clay}), we conclude that any two elements of $K$ are syndetically proximal for $f$.

Without loss of generality we may assume that $H_0$ is to the left of $H_1$ inside the interval $[0,1]$. Denote $\eps=\tfrac{1}{2}\min\set{\dist(\set{0},H_0),\dist(\set{1},H_1),\dist(H_0,H_1)}.
$

If $b_\alpha,b_\beta\in K$ are distinct, then $\alpha_j\neq \beta_j$ for infinitely many $j$ and therefore we obtain $\limsup_{n\ra \infty} |f^{n}(b_\alpha)-f^{n}(b_\beta)|>\eps$. Thus the Cantor set $K$ is a syndetically $\epsilon$-scrambled set for $f$.

Next observe that $f^n(K)\subset H_0\cup H_1$ for infinitely many $n\in \mathbb N$. Moreover, if $U\subset [0,1]$ is any nonempty open set, then $f^n(U)\supset (\eps, 1-\eps)\supset H_0\cup H_1$ for all sufficiently large $n\in \mathbb N$. Therefore, for each nonempty open set $U\subset [0,1]$, there is $n\in \mathbb N$ such that $f^n(K)\subset f^n(U)$. Now by the proof of Lemma \ref{cdense}, we obtain a dense Mycielski set $S\subset [0,1]$ such that $S\cup \{0\}$ is a syndetically $\eps$-scrambled set for $f$.
\end{proof}

\begin{cor}
Let $f\colon [0,1]\to [0,1]$ be a transitive continuous map. Then there exist $\eps>0$, and a dense Mycielski syndetically $\eps$-scrambled set $S$ for $f$.
\end{cor}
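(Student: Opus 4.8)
The plan is to treat the mixing and the transitive‑non‑mixing cases separately. If $f$ is mixing, Theorem~\ref{cdinvss} already delivers everything: it produces $n\in\mathbb N$, $\eps>0$ and a dense Mycielski $f^{2^n}$-invariant set $T\subset[0,1]$ which is syndetically $\eps$-scrambled for $f$, so nothing further is needed in this case.

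So assume $f$ is transitive but not mixing. As recalled at the start of this subsection, there is a fixed point $x\in(0,1)$ with $[0,1]=J_0\cup J_1$, $J_0=[0,x]$, $J_1=[x,1]$, $f(J_0)=J_1$, $f(J_1)=J_0$, and $f^2|_{J_i}$ a mixing interval map on $J_i$ for $i=0,1$ (this is the meaning of ``$f|_{J_i}$ mixing'', since $f$ swaps the two halves). I would rescale each half to $[0,1]$ so that the fixed point becomes $0$: let $h_i\colon J_i\to[0,1]$ be the affine homeomorphism with $h_i(x)=0$, and put $g_i=h_i\circ(f^2|_{J_i})\circ h_i^{-1}$. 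Then $g_i\colon[0,1]\to[0,1]$ is mixing and $g_i(0)=h_i(f^2(x))=h_i(x)=0$, so Theorem~\ref{thm:mixna} applies and yields $\eps_i>0$ and a dense Mycielski set $S_i\subset[0,1]$ with $S_i\cup\{0\}$ syndetically $\eps_i$-scrambled for $g_i$; we may assume $0\notin S_i$. Transporting back, $S_i':=h_i^{-1}(S_i)$ is a dense Mycielski subset of $J_i$ not containing $x$, and since $h_i$ is affine, $S_i'\cup\{x\}$ is syndetically $\eps_i'$-scrambled for $f^2|_{J_i}$ for some $\eps_i'>0$. Because a (syndetically) scrambled pair for $f^2$ is also one for $f$ — for the syndeticity one passes a syndetic set of ``even'' times to the adjacent odd times using uniform continuity of $f$, the rest being immediate — the set $S_i'\cup\{x\}$ is in fact syndetically $\eps_i'$-scrambled for $f$.

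Finally I would take $\eps=\min\{\eps_0',\eps_1'\}$ and $S=S_0'\cup S_1'$, which is a dense Mycielski subset of $J_0\cup J_1=[0,1]$. For distinct $a,b\in S$ lying in the same $S_i'$ the pair $(a,b)$ is already syndetically $\eps_i'$-scrambled for $f$, so the only case needing a new argument is $a\in S_0'$, $b\in S_1'$. Here $(a,x)$ and $(x,b)$ are syndetically proximal, hence so is $(a,b)$ because $\SProx(f)$ is an equivalence relation \cite{Clay}. Moreover $a\in[0,x)$ and $b\in(x,1]$, neither orbit ever meets the fixed point $x$ (else it would be asymptotic to $x$, contradicting $(a,x),(b,x)\notin\Asy(f)$), and $f$ interchanges $[0,x)$ and $(x,1]$; thus for every $m\ge0$ exactly one of $f^m(a),f^m(b)$ lies in $[0,x)$ and the other in $(x,1]$, whence $|f^m(a)-f^m(b)|=|f^m(a)-x|+|f^m(b)-x|\ge|f^m(a)-x|$ for all $m$. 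Since $(a,x)$ is $\eps_0'$-scrambled for $f$, taking $\limsup$ gives $\limsup_m|f^m(a)-f^m(b)|\ge\eps_0'\ge\eps$, and in particular $(a,b)\notin\Asy(f)$. Hence every pair of distinct points of $S$ is a syndetically $\eps$-scrambled pair, and $S$ is the desired set. The step I expect to be the main obstacle is precisely this last gluing — producing one uniform $\eps$ valid for all pairs, including the ``cross'' pairs joining the two halves — but it is resolved by the elementary observation that points from opposite halves have orbits staying on opposite sides of $x$; everything else is a direct application of Theorems~\ref{cdinvss} and~\ref{thm:mixna}.
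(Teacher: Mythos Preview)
Your proof is correct and follows essentially the same approach as the paper: split into the mixing case (Theorem~\ref{cdinvss}) and the non-mixing case, in the latter apply Theorem~\ref{thm:mixna} to each half with the common fixed point playing the role of $0$, and glue using the equivalence of $\SProx(f)$. The paper leaves the verification that $S_0'\cup S_1'$ is syndetically $\eps$-scrambled as ``not difficult''; you have supplied exactly that verification, including the key observation that cross pairs stay on opposite sides of the fixed point, which yields the uniform $\eps$-separation.
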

\begin{proof}
If $f^2$ is transitive then it is well known that $f$ is mixing and we can apply Theorem~\ref{cdinvss}. If $f^2$ is not transitive, then there is $a\in (0,1)$ such that $a$ is the unique fixed point of $f$, the two intervals $[0,a]$, $[a,1]$ are cyclicly permuted by $f$, and both the maps $f^2|_{[0,a]}$, $f^2|_{[a,1]}$ are topologically mixing. We can apply Theorem~\ref{thm:mixna} to obtain $\eps>0$ and dense Mycielski sets $S_1\subset [0,a]$, $S_2\subset [a,1]$ such that $S_1\cup \{a\}$ and $S_2\cup \{a\}$ are syndetically $\eps$-scrambled sets for the maps $f^2|_{[0,a]}$, $f^2|_{[a,1]}$ respectively. Now it is not difficult to verify that $S_1\cup S_2$ is a dense Mycielski syndetically $\eps$-scrambled set for $f$ in $[0,1]$.
\end{proof}

\subsection{Interval maps with zero entropy}

In this section we examine the possibility of existence of syndetically scrambled sets for interval maps with zero entropy. It is known (e.g. see \cite{SLC}) that $\htop(f)=0$ for an interval map if and only if the period of any periodic point of $f$ is a power of 2. If only finitely many powers of 2 occur as periods of periodic points of $f$, then the results of \cite{SLC} imply that for every $x\in [0,1]$, there is a periodic point $p\in [0,1]$ such that $(x,p)\in \Asy(f)$, and therefore $f$ cannot have a scrambled set in this case. So it suffices to restrict our attention to interval maps of \emph{type $2^\infty$}, that is, those for which the set of periods of periodic points is equal to $\{1,2,2^2,2^3,\ldots \}$. Some interval maps of type $2^\infty$ posses scrambled sets and some others do not \cite{SmiOL}. For any interval map $f$ of type $2^\infty$, we will show that $\Prox(f)=\SProx(f)$ so that any scrambled set is automatically syndetically scrambled.

After \cite{SmiOL} we say that an interval $J\subset [0,1]$ is \emph{$f$-periodic of order
$k$} if $J$, $f(J), \ldots $, $f^{k-l}(J)$ are pairwise disjoint intervals and $f^k(J) = J$. Our main tool in the sequel will be the following important characterization of infinite $\omega$-limits sets of interval maps of type $2^\infty$.

\begin{lem}[{\cite[Theorem 3.5]{SmiOL}}]\label{lem:inf_ol_type2inf}
Let $f\colon [0,1]\to [0,1]$ be a continuous map of type $2^\infty$ and let $K\subset [0,1]$ be an infinite $\omega$-limit set of some point.
Then there is a sequence $( J_n)_{n=0}^\infty\subset [0,1]$ of $f$-periodic intervals with the following properties:
For any $n$,
\begin{enumerate}[(i)]
\item  $J_n$ has period $2^n$.

\item  $J_{n+l}\cup f^{2^n}(J_{n+1}) \subset J_n$.

\item $K\subset \bigcup_{j=0}^\infty f^j(J_n)$.

\item $K\cap f^j(J_n) \neq \emptyset$ for every $j\geq 0$.
\end{enumerate}
\end{lem}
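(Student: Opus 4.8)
Since this Lemma is quoted from \cite[Theorem~3.5]{SmiOL}, the ``proof'' is in principle just that reference; below I indicate how the classical argument runs, since it is the standard structure theory of zero-entropy interval maps. The plan is to produce the $f$-periodic intervals $J_n$ by induction on $n$, at each step \emph{renormalizing}: replacing $f$ by the first-return map $f^{2^n}$ on $J_n$ and extracting inside it a period-two periodic interval that still captures $K$.

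For $n=0$ take $J_0$ to be the core $\bigcap_{m\ge 0} f^m([0,1])$, which is a closed interval with $f(J_0)=J_0$ and contains every $\omega$-limit set, in particular $K$; then (i), (iii), (iv) are trivial at level $0$. For the inductive step, suppose $J_n$ is $f$-periodic of period $2^n$ with $K\subseteq\bigcup_{j\ge 0} f^j(J_n)$ and $K\cap f^j(J_n)\neq\emptyset$ for all $j$. Since $K$ is infinite, $K\cap J_n$ is infinite (one of the $2^n$ pieces of $K$ cut out by the orbit of $J_n$ is infinite, and applying $f$ spreads infiniteness to all of them); in particular $K$ meets $\Int J_n$. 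Put $g=f^{2^n}\colon J_n\to J_n$. Because $f$ is of type $2^\infty$, $g$ has no periodic orbit of odd period $>1$ (such an orbit would force $f$ to have a periodic point whose period is not a power of $2$), so by Sharkovskii's theorem $g$ has zero entropy; and $g$ carries the infinite closed invariant set $K\cap J_n$ (essentially $g(K\cap J_n)=K\cap J_n$, up to boundary points), so $g$ is not of type $2^N$ for any finite $N$. The crucial point is then the one-dimensional fact that such a $g$ has a periodic interval $J'\subseteq J_n$ of period two with $K\cap J_n\subseteq J'\cup g(J')$ and $K$ meeting $J'$. Set $J_{n+1}=J'$: its $f$-orbit consists of the $2^n$ $f$-translates of $J'$ together with those of $g(J')=f^{2^n}(J')$, so $J_{n+1}$ is $f$-periodic of period $2^{n+1}$; property (ii) is exactly $J_{n+1}\cup f^{2^n}(J_{n+1})\subseteq J_n$ (immediate, since $J'\subseteq J_n$ and $f^{2^n}(J')=g(J')\subseteq J_n$); and (iii), (iv) at level $n+1$ follow from the corresponding facts at level $n$, the inclusion $K\cap J_n\subseteq J_{n+1}\cup g(J_{n+1})$, and $f(K)=K$.

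The main obstacle is precisely this renormalization step: extracting from a zero-entropy interval map of $J_n$ with an infinite invariant subset of $K$ a period-two periodic subinterval that does not lose $K$. This is where the Sharkovskii combinatorics is genuinely used — one locates a fixed point $c$ of $g$ in $J_n$, shows (from the absence of a horseshoe together with the infiniteness of the invariant set $K\cap J_n$) that there is a period-two orbit straddling $c$, and from it builds the periodic interval $J'$ around one of the two orbit points, while checking that $K\cap J_n$ can neither collapse onto $c$ nor be trapped in a single monotone half. Carrying this through so that the $J_n$ come out genuinely nested and so that each $J_n$ still meets $K$ — exactly what (ii)--(iv) demand — is the delicate bookkeeping; everything else is formal. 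Alternatively, one simply invokes the Misiurewicz--Block--Coppel structure theorem for zero-entropy interval maps, which packages precisely this.
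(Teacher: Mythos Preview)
The paper does not give its own proof of this lemma; it is simply quoted as \cite[Theorem~3.5]{SmiOL} and used as a black box. You correctly identify this at the outset, and your sketch of the renormalization argument is the standard one underlying Smital's result, so there is nothing to compare against and nothing is missing relative to the paper.
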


The following result follows \cite[Prop.~4.3.]{JL}, since every set of Banach density $1$ must be syndetic. However, since the proof in our restricted case
is much shorter, we present it for completeness.

\begin{thm}\label{thm:int2inf}
Let $f\colon [0,1]\to [0,1]$ be a continuous map of type $2^\infty$. Then $\Prox(f)=\SProx(f)$.
\end{thm}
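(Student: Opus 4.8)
The plan is to prove the inclusion $\Prox(f)\subseteq\SProx(f)$; the reverse inclusion is automatic, since a syndetic set is infinite, so $(x,y)\in\SProx(f)$ already forces $\liminf_n d(f^n(x),f^n(y))=0$. Fix therefore $(x,y)\in\Prox(f)$ and $\eps>0$; the goal is to show that $\set{n:d(f^n(x),f^n(y))<\eps}$ is syndetic. I would split according to whether $\omega(x,f)$ is finite or infinite, first noting that proximality forces $\omega(x,f)$ and $\omega(y,f)$ to be simultaneously finite or infinite: a proximal pair has a common $\omega$-limit point $w\in\omega(x,f)\cap\omega(y,f)$, and if $\omega(x,f)$ were finite it would be a periodic orbit (all periods are powers of $2$), making $w$ periodic; but an infinite $\omega$-limit set of a type $2^\infty$ map contains no periodic point — immediate from Lemma~\ref{lem:inf_ol_type2inf}, since a periodic point of period $2^b$ lying in a piece $f^j(J_n)$ with $n>b$ would have $f^{2^b}(f^j(J_n))=f^{j+2^b}(J_n)$ meeting $f^j(J_n)$, contradicting disjointness of the $2^n$ cyclically permuted pieces.

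In the finite case both $\omega(x,f)$ and $\omega(y,f)$ are periodic orbits, and by the standard fact that a point of an interval map with finite $\omega$-limit set is asymptotically periodic (alternatively: $\omega(x,f^{2^a})\subseteq\omega(x,f)$ consists of fixed points of $f^{2^a}$, so it is a single point by internal chain transitivity of $\omega$-limit sets), there are periodic points $p,q$ with $(x,p),(y,q)\in\Asy(f)\subseteq\SProx(f)$. Then $(x,y)\in\Prox(f)$ gives $(p,q)\in\Prox(f)$, so the sequence $n\mapsto d(f^n(p),f^n(q))$, which is periodic, attains the value $0$; hence $\set{n:d(f^n(p),f^n(q))<\eps}$ contains an arithmetic progression and is syndetic, so $(p,q)\in\SProx(f)$. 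Since $\SProx(f)$ is an equivalence relation \cite{Clay}, $(x,y)\in\SProx(f)$.

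The main case is $\omega(x,f)$, hence also $\omega(y,f)$, infinite. Here I would pick a minimal set $M\subseteq\omega(x,f)\cap\omega(y,f)$ (this intersection is nonempty, closed and $f$-invariant, and $M$ is infinite by the previous paragraph), and apply Lemma~\ref{lem:inf_ol_type2inf} to $M$, obtaining for each $n$ an $f$-periodic interval $J_n$ of period $2^n$ with $M\subseteq Q_n:=\bigcup_{j=0}^{2^n-1}f^j(J_n)$, the $2^n$ pieces $f^j(J_n)$ being pairwise disjoint nondegenerate closed intervals cyclically permuted by $f$ (so $f(Q_n)=Q_n$), each meeting $M$. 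Since $M$ is infinite it meets $\Int(J_n)$, so $\omega(x,f)\supseteq M$ contains a point of $\Int(Q_n)$; as $Q_n$ is closed and forward invariant, the orbit of $x$ enters $Q_n$ and stays there, advancing one piece per step: there are $N_n^x,c_n^x$ with $f^k(x)\in f^{(k+c_n^x)\bmod 2^n}(J_n)$ for all $k\ge N_n^x$, and likewise for $y$. Proximality synchronizes the phases: along a sequence $k_i\to\infty$ with $d(f^{k_i}(x),f^{k_i}(y))\to 0$, for large $i$ the distance drops below the minimal gap between distinct pieces of $Q_n$, forcing $f^{k_i}(x)$ and $f^{k_i}(y)$ into the same piece and hence $c_n^x=c_n^y=:c_n$. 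So for all large $k$, $f^k(x)$ and $f^k(y)$ lie in the same piece $f^{(k+c_n)\bmod 2^n}(J_n)$.

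Finally I would convert ``same piece'' into ``$\eps$-close on a syndetic set'' by a counting argument — this is where type $2^\infty$ really pays off, and why the proof is shorter than the general Banach-density argument of \cite{JL}. The $2^n$ pieces of $Q_n$ are disjoint subintervals of $[0,1]$, so at most $\lfloor 1/\eps\rfloor$ of them have diameter $\ge\eps$; since the orbit of $x$ visits each piece exactly once in every block of $2^n$ consecutive times, the set of $k$ with $f^k(x)$ in a piece of diameter $\ge\eps$ has at most $\lfloor 1/\eps\rfloor$ elements in each such block. Choosing $n$ with $2^n>1/\eps$, the complementary set of $k$ — for which $f^k(x)$, and therefore also $f^k(y)$, lies in a common piece of diameter $<\eps$ — meets every block of $2^n$ consecutive integers and is thus syndetic; for such $k$ beyond the entry times, $d(f^k(x),f^k(y))<\eps$, which completes the proof. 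I expect the delicate step to be the one preceding the counting argument: trapping both orbits in a single common tower $\set{Q_n}$ with matching phase. This hinges on applying Lemma~\ref{lem:inf_ol_type2inf} to a minimal set common to $\omega(x,f)$ and $\omega(y,f)$ (so both orbits are absorbed by the same forward-invariant $Q_n$) and on the rigidity that, once inside $Q_n$, an orbit marches through the pieces in lockstep; the care needed is in checking that the orbits genuinely enter the closed set $Q_n$ rather than merely approach it, and in the phase-matching via proximality. The finite-$\omega$-limit case and the final counting step are comparatively routine.
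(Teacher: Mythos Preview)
Your proof is correct, and the overall strategy---use Lemma~\ref{lem:inf_ol_type2inf} to trap both orbits in a tower of $2^n$ cyclically permuted intervals, then exploit that some piece has diameter below $\eps$---is the same as the paper's. The execution differs in one point worth noting. You manufacture a common tower by passing to a minimal set $M\subset\omega(x,f)\cap\omega(y,f)$, trap the two orbits in $Q_n$ separately, and then invoke proximality a second time to match phases. The paper instead extracts a single point $z$ together with one sequence $(n_j)$ along which \emph{both} $f^{n_j}(x)$ and $f^{n_j}(y)$ converge to $z$; applying the lemma to $\omega(x,f)$ and picking a small piece $J_n$ with $f^k(z)\in\Int J_n$, it gets an index $s$ with $f^{s+k}(x),f^{s+k}(y)\in J_n$ simultaneously, so the arithmetic progression $\{s+k+j2^n\}$ already witnesses syndetic proximity. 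This avoids the minimal-set detour and the separate phase-matching step. Your counting argument (at most $\lfloor 1/\eps\rfloor$ pieces of diameter $\ge\eps$) is a mild sharpening of the paper's pigeonhole (some piece has diameter $\le 1/2^n$), but either suffices.
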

\begin{proof}
Consider $(x,y)\in \Prox(f)$. Since $\liminf_{n\ra\infty}|f^n(x)-f^n(y)|=0$, there is a point $z\in \omega(x,f)\cap \omega(y,f)$ with the property that $\lim_{j\ra \infty} |z-f^{n_j}(x)|=0$ and $\lim_{j\ra \infty} |z-f^{n_j}(y)|=0$ for some increasing sequence $(n_j)_{j=0}^\infty$.

If $z$ is an eventually periodic point, say of period $n$, then it cannot belong to any periodic interval of period larger than $n$.
It follows by Lemma~\ref{lem:inf_ol_type2inf} that in this case both the sets $\omega(x,f)$, $\omega(y,f)$ are finite. This implies the existence of some $0\le k\le n$ such that $(x,f^k(z)), (y,f^k(z))\in \Asy(f)$. Then $(x,y)\in \Asy(f)\subset \SProx(f)$.

For the second case, let us assume that $z$ is not eventually periodic, in particular $\orbp(z,f)$ as well as $\omega(x,f)$ are infinite.
Fix any $\eps>0$, and let $n$ be such that $1/2^n<\eps$. Let $J_n$ be the periodic interval provided by Lemma~\ref{lem:inf_ol_type2inf} for $\omega(x,f)$.
By the definition intervals $J_n,f(J_n),\ldots, f^{2^n-1}(J_n)$ are pairwise disjoint, thus one of them has diameter smaller than $\eps$ and so without loss of generality we may assume that $\diam [J_n]<\eps$. Since $z\in \omega(x,f)$ is a point with infinite orbit, there is $k\in \mathbb N$ such that $f^k(z)\in \Int J_n$. In particular, there is an open set $U\ni z$ such that $f^k(U)\subset J_n$. Then by the definition of $z$ we can find $s\in \mathbb N$ such that $f^s(x),f^s(y)\in U$. The proof is finished by the fact that
$
\set{i: d(f^i(x),f^i(y))<\eps}\supset \set{i: f^i(x)\in J_n,f^i(y)\in J_n}\supset \set{s+k+j2^n: j\in \N}.
$
\end{proof}

\begin{thm}\label{thm:interval_ent}
For a continuous $f\colon [0,1]\to [0,1]$, the following are equivalent:

\noindent (i) $\htop(f)>0$.

\noindent (ii) $\Prox (f)\setminus \SProx(f)\neq \emptyset$.

\noindent (iii) There is an $\eps$-scrambled set $S\subset [0,1]$ such that $S\times S\setminus \Delta \subset \Prox (f)\setminus \SProx(f)$. \end{thm}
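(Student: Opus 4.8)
The plan is to prove the cycle $(i)\Rightarrow(iii)\Rightarrow(ii)\Rightarrow(i)$. The implication $(iii)\Rightarrow(ii)$ is trivial, since an $\eps$-scrambled set contains two distinct points whose pair lies in $\Prox(f)\setminus\SProx(f)$. For $(ii)\Rightarrow(i)$ I argue by contraposition, using the discussion of zero-entropy interval maps recalled in this section. If $\htop(f)=0$ then every period of a periodic point is a power of $2$. If only finitely many powers of $2$ occur, then by \cite{SLC} every point is asymptotic to a periodic point; two proximal periodic points are eventually equal (their mutual distance is eventually periodic in $n$, so its liminf is $0$ only if it vanishes), and then a three-term triangle-inequality estimate yields $\Prox(f)=\Asy(f)\subseteq\SProx(f)$, so $\Prox(f)\setminus\SProx(f)=\emptyset$. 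If instead $f$ is of type $2^\infty$, Theorem~\ref{thm:int2inf} gives $\Prox(f)=\SProx(f)$, again making the difference empty. Hence $(ii)$ forces $\htop(f)>0$.

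The substantial implication is $(i)\Rightarrow(iii)$, and here I would build the scrambled set from a copy of the full shift sitting inside $[0,1]$. Since $\htop(f)>0$, by \cite[Theorem~9]{tksm} (exactly as in the proof of Theorem~\ref{cdinvss}) there are $m=2^n$, a closed $f^m$-invariant set $X\subseteq[0,1]$, a conjugacy $\phi\colon(X,f^m)\to(\Sigma_4,\sigma)$, and pairwise disjoint closed intervals $J_0,J_1,J_2,J_3$, appearing in this order from left to right, with $\phi(X\cap J_i)=C[i]$; put $\eps=\dist(J_1,J_2)>0$. Fix strictly increasing sequences $(a_k)$, $(b_k)$ of positive integers with $a_k,b_k\to\infty$, and define an injective continuous map $g$ from the full $2$-shift into the subsystem of $(\Sigma_4,\sigma)$ on the symbols $\{1,2\}$ by letting $g(\alpha)$ be obtained, on the nonnegative coordinates, by concatenating the blocks $1^{a_1}\,c_1\,1^{a_2}\,c_2\,1^{a_3}\cdots$, where $c_k=1^{b_k}$ if $\alpha_{k-1}=0$ and $c_k=2^{b_k}$ if $\alpha_{k-1}=1$ (in the two-sided case take the negative coordinates constantly equal to $1$). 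Let $\Gamma$ be a Cantor scrambled set for the full $2$-shift, which exists by Corollary~\ref{wMixScra}, and set $S=\phi^{-1}(g(\Gamma))\subseteq X$; this is a Cantor set since $g$ and $\phi^{-1}$ are continuous injections.

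Then I would verify, for distinct $x=\phi^{-1}(g(\alpha))$, $x'=\phi^{-1}(g(\beta))$ in $S$ (so $\alpha\neq\beta$ in $\Gamma$), that $(x,x')$ lies in $\Prox(f)\setminus\SProx(f)$ and that $\limsup_j|f^j(x)-f^j(x')|\ge\eps$. Since $g(\alpha)$ and $g(\beta)$ agree on every block $1^{a_k}$, evaluating at the midpoint $n_k$ of the $k$-th such block gives $d(\sigma^{n_k}g(\alpha),\sigma^{n_k}g(\beta))\to0$, hence $|f^{mn_k}(x)-f^{mn_k}(x')|\to0$ by uniform continuity of $\phi^{-1}$, so $(x,x')\in\Prox(f^m)\subseteq\Prox(f)$. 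On the other hand, $\Gamma$ being scrambled forces $\alpha_{k-1}\neq\beta_{k-1}$ for infinitely many $k$, and for each such $k$ the block $c_k$ of $g(\alpha)$ differs from that of $g(\beta)$ in every coordinate, so for every $n$ in that block of coordinates one of $f^{mn}(x),f^{mn}(x')$ lies in $J_1$ and the other in $J_2$, giving $|f^{mn}(x)-f^{mn}(x')|\ge\eps$. This already yields $\limsup_j|f^j(x)-f^j(x')|\ge\eps$, so $(x,x')\notin\Asy(f)$ and $S$ is $\eps$-scrambled. Moreover the set of such $n$ contains the union of the coordinate-blocks of the disagreeing $c_k$, whose lengths $b_k$ tend to infinity, so it is thick; hence $\{n:|f^{mn}(x)-f^{mn}(x')|<\eps\}$ is not syndetic and $(x,x')\notin\SProx(f^m)$. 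Since a pair is (syndetically) scrambled for $f$ if and only if it is so for $f^m$, and $(x,x')\notin\Asy(f)$, we conclude $(x,x')\notin\SProx(f)$. Thus $S\times S\setminus\Delta\subseteq\Prox(f)\setminus\SProx(f)$, which is $(iii)$.

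The step I expect to be the main obstacle is this last one: the construction produces disagreements only at times divisible by $m$, so one cannot conclude $(x,x')\notin\SProx(f)$ directly, as the behaviour at times strictly between consecutive multiples of $m$ is uncontrolled and could keep the orbits close. The remedy is exactly what is used above --- argue non-syndetic proximality for the return map $f^m$, where the bad times genuinely form a thick set, and then transfer back via the insensitivity of the notions of scrambled and syndetically scrambled pair to passing to powers of $f$. A secondary, purely technical point is the one-sided versus two-sided bookkeeping for $(\Sigma_4,\sigma)$ (handled by reading the agreeing blocks at their midpoints and fixing the negative coordinates), together with the routine checks that $g$ is a continuous injection and that $\phi$ conjugates the relevant orbit segments.
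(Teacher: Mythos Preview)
Your proof is correct and follows essentially the same route as the paper's: the contrapositive $(ii)\Rightarrow(i)$ via the zero-entropy dichotomy and Theorem~\ref{thm:int2inf} is identical, and for $(i)\Rightarrow(iii)$ both arguments invoke \cite[Theorem~9]{tksm} to embed a full shift as a subsystem of some power $f^m$, then build an explicit scrambled set in the shift whose pairs disagree on a thick set of coordinates, transferring back via $\SProx(f)=\SProx(f^m)$.

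The only difference is cosmetic, in the explicit encoding. The paper works directly in $\Sigma_2^+$ and sets $z^x_i=x_n$ for $i\in[4^n,4^{n+1})$, so that a disagreement $x_n\ne y_n$ produces a block of length $3\cdot 4^n$ where $z^x$ and $z^y$ differ, while agreements $x_n=y_n$ (which occur infinitely often since $(x,y)$ is proximal) give long agreeing blocks. Your encoding interleaves forced agreement blocks $1^{a_k}$ with data blocks $c_k$ of length $b_k$; this has the minor advantage that proximality of $g(\alpha),g(\beta)$ is automatic for every pair and does not rely on the proximality of $\alpha,\beta$ in $\Gamma$ (indeed you only use that $\Gamma$ contains no asymptotic pair, which any scrambled set satisfies). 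Conversely, the paper's single geometric sequence $4^n$ is a bit more economical. Either way, the mechanism---growing disagreement blocks force thickness of the separation times, hence failure of syndetic proximality for $f^m$, hence for $f$---is the same.
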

\begin{proof}
We have already noted in the introduction of this section that if $h(f)=0$ and $f$ is not of type $2^\infty$, then every point is asymptotic to a periodic point, and hence $\Prox(f)=\Asy(f)=\SProx(f)$ in this case. If $h(f)=0$ and $f$ is of type $2^\infty$, then $\Prox(f)=\SProx(f)$ by Theorem~\ref{thm:int2inf}. This proves (ii) $\Longrightarrow$ (i).

It remains to prove (i) $\Longrightarrow$ (iii). By \cite[Theorem~9]{tksm} and by the fact that
$\SProx(f)\subset \SProx(f^n)$ it remains to show that there is an $\eps$-scrambled set $S\subset \Sigma^+_2$ such that $S\times S\setminus \Delta \subset \Prox (\sigma)\setminus \SProx(\sigma)$, where $(\Sigma^+_2,\sigma)$ is the one-sided full shift on two symbols.
Let $C\subset \Sigma^+_2$ be a scrambled set for $\sigma$. For any $x\in C$ define $z^x\in \Sigma^+_2$ by putting $z^x_0=0$ and
$z^x_i=x_n$ for all $i\in [4^n,4^{n+1})$ and $n=0,1,\ldots$. Let $S=\set{z^x: x\in C}$. It is easy to see that $S$ remains
a scrambled set. If $x,y\in C$ are distinct and if $x_n\neq y_n$, then $z^x_i\neq z^y_i$ for every $i\in [4^n,4^{n+1})$; this shows that the set $\set{i:z^x_i\neq z^y_i}$ is thick, and thus $(z^x,z^y)\notin \SProx(\sigma)$.
\end{proof}

It was proved in \cite[Thm.~5.5]{JL} that for interval maps $\htop(f)>0$ iff $\Prox(f)$ is not an equivalence relation (a few more equivalent conditions in terms of Li-Yorke tuples are provided there as well).
In particular, implication $(i)\Longrightarrow (ii)$ in Theorem~\ref{thm:interval_ent} follows directly form these results. It is also noteworthy that Wu proved in \cite{Wu} that $\Prox(f)$ is an equivalence relation exactly when $\SProx(f)=\Prox(f)$ (while in the paper \cite{Wu} only actions of groups are considered, the proof works for the non-invertible case as well). This combined with \cite{JL} can serve also as a proof of converse implication.

It is not hard to verify that Theorem~\ref{thm:interval_ent} deeply relies on the special structure of the underlying space. For a general compact dynamical system, there are no implications between the following two properties: (i) $\htop(f)>0$, (ii) $\Prox (f)\setminus \SProx(f)\ne \emptyset$. If we consider a proximal system with positive entropy (for example, the one constructed in \cite{WMProx}), then we see that (i) does not imply (ii). If we consider a non-trivial minimal weakly mixing system with zero entropy (for instance, the Chac\'{o}n flow; see \cite{BK} for a discussion) then we see that it cannot be proximal, thus $\SProx(f)$ is nowhere dense, while $\Prox(f)$ is residual in $X^2$, and thus (ii) does not imply (i).

\section*{Acknowledgements}

The authors are grateful to Jian Li for his valuable comments which leaded to an essential improvement of the present paper.
We express many thanks to a referee of this paper for his careful reading and numerous suggestions.

%This work was supported by the Polish Ministry of Science and Higher Education.
Research of Piotr Oprocha leading to results contained in this paper was also supported by the Marie Curie European Reintegration Grant of the European Commission under grant agreement no. PERG08-GA-2010-272297.

\bibliographystyle{amsplain}
\bibliography{SyProxMO}
\end{document}